\documentclass[preprint]{elsarticle}
\usepackage[english]{babel}
\usepackage{amsfonts}
\usepackage{amsthm}
\usepackage{amsmath}
\usepackage{amscd}
\usepackage[active]{srcltx} 
\usepackage{latexsym}
\usepackage{amssymb}
\usepackage[latin1]{inputenc}
\usepackage{xcolor}
\usepackage{enumitem}

\usepackage{stackengine}
\stackMath

\vfuzz2pt 
\hfuzz2pt 


\numberwithin{equation}{section}

\newtheorem{thm}{Theorem}[section]
\newtheorem{prop}[thm]{Proposition}
\newtheorem{cor}[thm]{Corollary}
\newtheorem{lema}[thm]{Lemma}
\newtheorem{hyp}[thm]{Hypotheses}
\newtheorem{hyp2}[thm]{Hypothesis}

\newtheorem{prob}{Problem}

\newenvironment{customprob}[1]
  {\innercustomthm}
  {\endinnercustomthm}

\newtheorem*{Def}{Definition}
\newtheorem{example}{Example}
\newtheorem{obs}[thm]{Remark}



\newcommand{\PI}[2]{\left\langle \,#1 , #2\, \right\rangle}

\newcommand{\K}[2]{\left[ \,#1 , #2\, \right]}
\newcommand{\modulo}[1]{\left| \,#1\, \right|}
\newcommand{\set}[1]{\left\{ \,#1\, \right\}}
\newcommand{\setb}[1]{\big\{ \,#1\, \big\}}
\newcommand{\setB}[1]{\Big\{ \,#1\, \Big\}}

\newcommand{\parentesis}[1]{\left( \,#1\, \right)}

\newcommand{\ra}{\rightarrow}

\newcommand{\x}{\times}

\newcommand{\CC}{\mathbb{C}}
\newcommand{\RR}{\mathbb{R}}
\newcommand{\NN}{\mathbb{N}}
\newcommand{\St}{\mathcal{S}}

\newcommand{\N}{\mathcal{N}}

\newcommand{\M}{\mathcal{M}}

\newcommand{\HH}{\mathcal{H}}
\newcommand{\KK}{\mathcal{K}}
\newcommand{\EE}{\mathcal{E}}
\newcommand{\ZZ}{\mathcal{Z}}
\newcommand{\mc}[1]{\mathcal{#1}}

\newcommand{\ort}{{[\bot]}}

\newcommand{\noi}{\noindent}

\newcommand{\CV}{\mathcal{C}_V}

\newcommand{\PV}{\mathcal{P}^+(V)}
\newcommand{\PVV}{\mathcal{P}^-(V)}
\newcommand{\CT}{\mathcal{C}_{T}}

\newcommand{\dx}{\Delta x}
\newcommand{\wt}[1]{\widetilde{#1}}
\newcommand{\la}{\lambda}

\makeatletter
\newcommand{\eqnum}{\refstepcounter{equation}\textup{\tagform@{\theequation}}}
\makeatother

\DeclareMathOperator{\real}{Re}




\begin{document}

\begin{frontmatter}

\title{Indefinite least squares with a quadratic constraint}

\author[IAM,UBA]{Santiago Gonzalez Zerbo} 
\ead{sgzerbo@fi.uba.ar}

\author[IAM,UBA]{Alejandra Maestripieri}
\ead{amaestri@fi.uba.ar}

\author[IAM,UNLP]{Francisco Mart\'{\i}nez Per\'{\i}a}
\ead{francisco@mate.unlp.edu.ar}

\address[IAM]{Instituto Argentino de Matem\'atica ``Alberto P. Calder\'on''\\ Saavedra 15, Piso 3 (1083) Buenos Aires, Argentina}
\address[UBA]{Departamento de Matem\'atica-- Facultad de Ingenier\'{\i}a -- Universidad de Buenos Aires\\ Paseo Col\'on 850 (1063) Buenos Aires, Argentina}
\address[UNLP]{Centro de Matem\'atica de La Plata -- Facultad de Ciencias Exactas -- Universidad Nacional de La Plata\\ CC 172 (1900) La Plata, Argentina}

\begin{abstract}
An abstract indefinite least squares problem with a quadratic constraint is considered. This is a
quadratic programming problem with one quadratic equality constraint, where neither the objective nor the constraint are convex functions. 
Necessary and sufficient conditions are found for the existence of solutions.
\end{abstract}

\begin{keyword}
Indefinite least squares \sep Krein spaces \sep quadratically constrained quadratic programming

\MSC[2020] 46C20\sep 47B50 \sep 47B65
\end{keyword}

\end{frontmatter}

\section{Introduction}

Quadratic optimization is a fundamental problem in optimization theory and its applications. Economic equilibrium, combinatorial optimization and numerical
partial differential equations are all sources of quadratic optimization problems.
Quadratic programming (QP) with a
convex objective function was shown to be polynomial-time solvable. However, QP with an indefinite quadratic term is NP-hard in general.
Usually, duality concepts and variational methods are applied to characterize and compute global minimizers.
The literature on quadratically constrained quadratic programming (QCQP) problems is abundant, specially in the finite dimensional setting
\cite{Pencils,Park,Polik,Powell,Ye}. In this case, these problems
can be written in the following form:
\begin{alignat*}{3}
& \text{minimize}   \quad && f_0(x)&&=x^TP_0x+q_0^Tx+r_0\\
& \text{subject to}   \quad && f_i(x)&&=x^TP_ix+q_i^Tx+r_i\leq 0,\ \ i=1,2,...,m
\end{alignat*}
where $x\in\RR^n$ is the optimization variable, and $P_i\in\RR^{n\x n}$, $q_i\in\RR^n$, $r_i\in\RR$ are given problem
data, for $i=0,1,...,m$. 

This kind of QCQP problems can also be posed in the infinite dimensional setting, in particular in reproducing kernel Hilbert spaces (RKHS), see e.g. \cite{Dong, Kelly,Signoretto}. There, these problems stand as
\begin{alignat*}{3}
& \text{minimize}   \quad && f(x)&&=\langle T_0x,x\rangle+\langle c,x\rangle + \alpha_0\\
& \text{subject to}   \quad && g_i(x)&&=\langle T_i x,x\rangle+\langle y_i,x\rangle+\alpha_i\leq 0,\ \ i=1,2,...,m,
\end{alignat*}
where the optimization variable $x$ varies in a complex Hilbert space $(\HH,\PI{\cdot}{\cdot})$, and the data is composed of bounded operators $T_i:\HH\ra\HH$, vectors $y_i\in\HH$ and scalars $\alpha_i\in\CC$, for $i=0,1,...,m$.

On the one hand, if the operators $T_i$ are positive semidefinite, then the objetive and the restriction are
convex functions and the problem can be solved using a generalized Lagrangian and a dual maximization problem,
with the Karush-Kuhn-Tucker conditions, see e.g. \cite{Boltyanski,Boyd,Rockafellar,Sundaram}. 

On the other hand, if the operators $T_i$ are neither positive nor negative semidefinite, then the objetive and the
restrictions are not convex. Since the definiteness of the inner product in $\HH$ plays no role at all, the aim of
this work is to pose a similar QCQP problem with only one quadratic equality constraint (QP1QEC), but using indefinite inner product
spaces as codomains of the operators involved. More precisely, this paper is devoted to studying the following abstract indefinite least squares problem (ILSP) with a quadratic constraint:

\begin{prob}\label{pb 1}
Given a Hilbert space $(\HH,\PI{\cdot}{\cdot})$, and Krein spaces $(\KK,\K{\cdot}{\cdot}_\KK)$ and $(\EE,\K{\cdot}{\cdot}_{\EE})$,
let $T:\HH\ra \KK$ and $V:\HH \ra\EE$ be bounded operators. Also, assume that $T$ has closed range and $V$ is surjective. 
Given $(w_0,z_0)\in\KK\x\EE$, analyze the existence of
\begin{equation*}
\min\,\K{Tx-w_0}{Tx-w_0}_\KK,\textit{ \normalfont{subject to} }\K{Vx-z_0}{Vx-z_0}_{\EE}=0,
\end{equation*}
and if the minimum exists, find the set of arguments at which it is attained.
\end{prob}

One motivation for studying this problem is related with practical issues derived from machine learning theory. The classical literature is formulated in RKHS, and the positive definiteness of the kernel implies that the objective functions involved in the QCQP are convex, see \cite{Gartner,Mohri,Xu}. However, the main obstacle arising in the applications is to achieve the Mercer condition for the kernel, i.e. to verify that the kernel is positive definite. Numerically, this is a painful condition to verify. In \cite{Canu2004,Canusplines,Oglic_2, Oglic} different authors propose to use reproducing kernel Krein spaces (RKKS) instead of RKHS (avoiding the necessity of verifying the Mercer condition), which turns into a more efficient solving tool from the numerical point of view.
The indefinite kernel techniques have been also applied to pattern recognition problems, see \cite{Haasdonk2005,Sonnenberg2006}.

Since $\K{\cdot}{\cdot}_\KK$ and $\K{\cdot}{\cdot}_\EE$ are indefinite inner products, the objective function $x\mapsto \K{Tx-w_0}{Tx-w_0}_\KK$ is not
convex while the equality constraint $\K{Vx-z_0}{Vx-z_0}_\EE=0$ is sign indefinite.  

If $(\EE,  \K{\cdot}{\cdot}_\EE)$ is a Hilbert space, the above constrained ILSP consists in analyzing the existence of
\begin{equation*}
\min\,\K{Tx-w_0}{Tx-w_0}_\KK,\textit{ \normalfont{subject to} }Vx=z_0.
\end{equation*}
In this case, the quadratic form $x\mapsto \K{Tx-w_0}{Tx-w_0}_\KK$ is minimized over the affine manifold $x_0 + N(V)$ where $x_0\in\HH$ is a solution to $Vx=z_0$, see \cite{GMMP10_2, GMMP16}. 

In the general setting, the objective function is minimized over a set given by a quadratic constraint.
Denote $\CV$ the set of neutral elements of the quadratic form $x\mapsto \K{Vx}{Vx}_\EE$, i.e.
\begin{equation*}
	\CV=\set{u\in\HH: \K{Vu}{Vu}_\EE=0}.
\end{equation*}
Then, given any $x_0\in\HH$ such that $Vx_0=z_0$, Problem \ref{pb 1} can be restated in the following way: 
\begin{customprob}{1'}
Given $x_0\in\HH$ and $w_0\in\KK$, analyze the existence of
\begin{equation*}
\min_{y\in\CV} \K{T(x_0 + y)-w_0}{T(x_0 + y)-w_0}_\KK,
\end{equation*}
and if the minimum exists, find the set of arguments at which it is attained.
\end{customprob}
A significant difficulty that arises is that $\CV$ is not a convex set. Moreover, the convex hull of $\CV$ is the complete Hilbert space $\HH$, thus replacing $\CV$ by its convex hull trivializes the problem.

\medskip

The paper is organized as follows. Section \ref{preliminaries} introduces the notation used along the work, as well as a brief exposition on Krein spaces and linear operators on Krein spaces. Its main purpose is to present in Proposition \ref{prop:asimov} a version of Farkas' Lemma (or $S$-procedure), and some of its consequences that are used repeatedly. Given linear operators $T:\HH\ra\KK$ and $V:\HH\ra\EE$ acting between Krein spaces, let $T^\#$ and $V^\#$ denote the adjoints of $T$ and $V$, respectively, with respect to the indefinite inner products. If the quadratic form $x\mapsto \K{Vx}{Vx}_\EE$ is indefinite, Proposition \ref{prop:asimov} says that $T$ maps $\CV$ into a nonnegative set of $\KK$ if and only if there exists $\rho\in\RR$ such that $T^\#T + \rho V^\#V$ is positive semidefinite. Moreover, if such $\rho$ exists, there is a closed interval $[\rho_-,\rho_+]$ of admissible values for $\rho$. If $\mc{P}^\pm(V)$ denote the subsets of $\HH$ where the quadratic form $x\mapsto \K{Vx}{Vx}_\EE$ takes positive and negative values, respectively, the extremal values $\rho_\pm$ are determined by
\[
\rho_-:=-\inf_{x\in\mc{P}^+(V)}\frac{\K{Tx}{Tx}}{\K{Vx}{Vx}} \quad\quad\text{and}\quad\quad \rho_+:=-\sup_{x\in\mc{P}^-(V)}\frac{\K{Tx}{Tx}}{\K{Vx}{Vx}},
\]
 see Corollary \ref{cor:intervalo}.

Section \ref{least squares} starts describing under which conditions the objective function is bounded from below over the set $x_0+\CV$, see Proposition \ref{prop infimo}. This implies that in order to have solutions to Problem \ref{pb 1} it is necessary that $T(\CV)$ is a nonnegative set of $\KK$. The rest of the section is devoted to presenting necessary and sufficient conditions for the existence of solutions to Problem \ref{pb 1} for a fixed initial data $(w_0,z_0)\in\KK\x\EE$, see Proposition \ref{prop cond 1} and Theorem \ref{teo:derivada}. 

Along Section \ref{condiciones sufi} we find a set of necessary and sufficient conditions for the existence
of solutions to Problem \ref{pb 1} for every initial data $(w_0,z_0)\in\KK\x\EE$. We start by showing that $T$ mapping $\CV$ into a uniformly positive
subset of $\KK$ is a necessary condition. Although it is not enough for our purposes, it leads us into an extra necessary condition: the attainment of 
\begin{equation*}
\sup_{x\in\mc{P}^-(V)}\frac{\K{Tx}{Tx}}{\K{Vx}{Vx}}\quad\quad\text{and}\quad\quad\inf_{x\in\mc{P}^+(V)}\frac{\K{Tx}{Tx}}{\K{Vx}{Vx}}.
\end{equation*}

Finally, we show that the above condition together with $T(\CV)$ being a uniformly positive set of $\KK$ are not only necessary but sufficient for
the existence of solutions for every initial data $(w_0,z_0)\in\KK\x\EE$. This result is stated in Theorem \ref{teo:ida_y_vuelta}.

In Section \ref{soluciones} we present a full description of $\mc{Z}(w_0,z_0)$. By Theorem \ref{teo:derivada}, given $(w_0,z_0)\in\KK\x\EE$ 
the set of solutions to Problem \ref{pb 1} is
\[
\ZZ(w_0,z_0)=x_0+\Omega,
\]
where $x_0\in\HH$ is any vector such that $Vx_0=z_0$ and 
\begin{equation*}
\Omega:=\setB{y\in\CV: (T^\#T+\lambda V^\#V)(x_0+y)=T^\#w_0+\lambda V^\#z_0\,\text{ for some $\lambda\in[\rho_-,\rho_+]$}}.
\end{equation*}
We show how the structures of $\Omega$ and $\mc{Z}(w_0,z_0)$ depend on the location of $\lambda$ in the interval $[\rho_-,\rho_+]$. The main result
of this section asserts that the set of solutions to Problem \ref{pb 1} is an affine manifold parallel to 
$N(T)\cap N(V)$ for every initial data $(w_0,z_0)$ belonging to an open and dense subset of the vector space $\KK\x\EE$.

As an application of the previous results, Section \ref{mixed} presents a generalization of the abstract mixed splines problem.

\section{Preliminaries}\label{preliminaries}

Along this work $\HH$ denotes a complex (separable) Hilbert space. If $\mc{K}$ is another Hilbert space then $\mc{L}(\HH, \KK)$ is the vector space of bounded linear operators from $\HH$ into $\KK$ and
$\mc{L}(\HH)=\mc{L}(\HH,\HH)$ stands for the algebra of bounded linear operators in $\HH$.

If $T\in \mc{L}(\HH, \KK)$ then $R(T)$ stands for the range of $T$ and $N(T)$ for its nullspace. The Moore-Penrose inverse
of an operator $T\in \mathcal{L}(\HH,\KK)$ is denoted by $T^\dag$.
Recall that $T^\dag\in\mc{L}(\KK,\HH)$ if and only if $T$ has closed range. 
For detailed expositions on the Moore-Penrose inverse, see \cite{BG, Nashed}. 

The reduced minimum modulus $\gamma(T)$ of an operator $T\in\mc{L}(\HH)$ is defined by
\[
\gamma(T)=\inf\set{\|Tx\|\,:\,\|x\|=1,\ x\in N(T)^\bot}.
\]
An operator $T\neq0$ has closed range if and only if $\gamma(T)>0$. In this case, $\gamma(T)=\|T^\dag\|^{-1}$.

An operator $A\in \mc{L}(\HH)$ is positive semidefinite if $\PI{Ax}{x}\geq 0$ for all $x\in\HH$; and it is positive definite if there exists
$\alpha>0$ such that $\PI{Ax}{x}\geq \alpha\|x\|^2$ for every $x\in\HH$. The cone of positive semidefinite operators is denoted by $\mc{L}(\HH)^+$.
We say that a selfadjoint operator $A\in\mc{L}(\HH)$ is indefinite if it is neither positive nor negative semidefinite, i.e. if there exist
$x_+,x_-\in\HH$ such that $\PI{Ax_+}{x_+}>0$ and $\PI{Ax_-}{x_-}<0$.

\subsection{Krein spaces}

In what follows we present the standard notation and some basic results on Krein spaces. For a complete exposition on the subject (and the proofs of the results below) see
\cite{Ando,Azizov,Bognar,Dritschel,Rovnyak}.

\medskip

An indefinite inner product space $(\mc{F}, \K{\cdot}{\cdot})$ is a (complex) vector space $\mc{F}$ endowed with a Hermitian sesquilinear form $\K{\cdot}{\cdot}: \mc{F}\x\mc{F} \ra \CC$.

A vector $x\in\mc{F}$ is {\it positive}, {\it negative}, or {\it neutral} if $\K{x}{x}>0$, $\K{x}{x}<0$, or $\K{x}{x}=0$, respectively.
Likewise, a subspace $\M$ of $\mc{F}$ is {\it positive} if every $x\in\M$, $x\neq0$ is a positive vector in $\mc{F}$; and it is
{\it nonnegative} if $\K{x}{x} \geq0$ for every $x\in\M$. Negative, nonpositive and neutral subspaces are defined mutatis mutandis.

If $\St$ is a subset of an indefinite inner product space $\mc{F}$, the \emph{orthogonal companion} to $\St$ is defined by 
\[
\St^{\ort}=\set{ x\in\mc{F} : \K{x}{s}=0 \; \text{for every $s\in\St$}}.
\]
It is easy to see that $\St^{\ort}$ is always a subspace of $\mc{F}$.

\begin{Def}
An indefinite inner product space $(\HH, \K{\cdot}{\cdot})$ is a \emph{Krein space} if it can be decomposed as a direct (orthogonal) sum of a Hilbert space and an anti Hilbert space, i.e. there
exist subspaces $\HH_\pm$ of $\HH$ such that $(\HH_+, \K{\cdot}{\cdot})$ and $(\HH_-, -\K{\cdot}{\cdot})$ are Hilbert spaces,
\begin{equation}\label{desc cano}
\HH=\HH_+ \dotplus \HH_-,
\end{equation}
and $\HH_+$ is orthogonal to $\HH_-$ with respect to the indefinite inner product. Sometimes we use the notation $\K{\cdot}{\cdot}_\HH$ instead of $\K{\cdot}{\cdot}$ to emphasize the Krein space considered.
\end{Def}

A pair of subspaces $\HH_\pm$ as in \eqref{desc cano} is called a \emph{fundamental decomposition} of $\HH$. Given a Krein space $\HH$ and a fundamental decomposition
$\HH=\HH_+\dotplus \HH_-$, the direct (orthogonal) sum of the Hilbert spaces $(\HH_+, \K{\cdot}{\cdot})$ and $(\HH_-, -\K{\cdot}{\cdot})$ is denoted
by $(\HH,\PI{\cdot}{\cdot})$.

If $\HH=\HH_+ \dotplus \HH_-$ and $\HH=\HH'_+ \dotplus \HH'_-$ are two different fundamental decompositions of $\HH$, the corresponding associated inner products $\PI{\cdot}{\cdot}$ and
$\PI{\cdot}{\cdot}'$ turn out to be equivalent on $\HH$. Therefore, the norm topology on $\HH$ does not depend on the chosen fundamental decomposition.

\medskip

A set $\M$ of a Krein space $(\HH,\K{\cdot}{\cdot})$ is {\it uniformly positive} if 
there exists $\alpha>0$ such that 
\[
\K{x}{x}\geq\alpha\|x\|^2\quad \text{ for every $x\in\M$},
\]
where $\|\cdot\|$ is the norm of any associated Hilbert space. Uniformly negative sets are defined mutatis mutandis.

If $(\HH,\K{\cdot}{\cdot}_\HH)$ and $(\KK,\K{\cdot}{\cdot}_\KK)$ are Krein spaces, $\mc{L}(\HH, \KK)$ stands for the vector space of linear transformations which are
bounded with respect to any of the associated Hilbert spaces $(\HH,\PI{\cdot}{\cdot}_\HH)$ and $(\KK,\PI{\cdot}{\cdot}_\KK)$. 
Given $T\in \mathcal{L}(\HH,\KK)$, the adjoint operator of $T$ (in the Krein spaces sense) is the unique operator $T^\#\in \mc{L}(\KK, \HH)$ such that
\[
\K{Tx}{y}_\KK=\K{x}{T^\#y}_\HH, \ \ \ \ x\in\HH,\,y\in\KK.
\]

We frequently use that if $T\in \mc{L}(\HH,\KK)$ and $\M$ is a closed subspace of $\KK$ then 
\begin{equation*}
T^\#(\M)^{\ort_\HH}= T^{-1}(\M^{\ort_\KK}).
\end{equation*}


\subsection{A version of Farkas' Lemma}

Let $(\HH,\PI{\cdot}{\cdot})$ be a Hilbert space, and $(\KK,\K{\cdot}{\cdot}_\KK)$, $(\EE,\K{\cdot}{\cdot}_\EE)$ be two Krein spaces.
Let $T\in\mc{L}(\HH,\KK)$ and $V\in\mc{L}(\HH,\EE)$. 
Recall that $\CV$ denotes the set of neutral vectors of the quadratic form associated to $V^\#V$:
\[
\CV=\setB{y\in\HH\,\,:\,\,\K{Vy}{Vy}=0}.
\]

If $V^\#V$ is a positive (or negative) semidefinite operator in $\HH$, then $\CV$ coincides with $N(V)$.
But, if $V^\#V$ is \emph{indefinite}, the set $\CV$ is strictly larger than $N(V)$.
From now on $V^\#V$ is assumed to be indefinite; i.e. neither positive nor negative semidefinite.

\medskip

The following result can be interpreted
as another manifestation of the S-Lemma (or Farkas' lemma), see \cite{Polik,Xia}. It first appeared in \cite{IKL82}. For
its proof, see Lemma 1.35 and Corollary 1.36 in \cite[Chapter 1, \S 1]{Azizov}. 
\begin{prop}\label{prop:asimov}
Given $T\in \mc{L}(\HH,\KK)$ and $V\in \mc{L}(\HH,\EE)$, the following conditions are equivalent:
\begin{enumerate}[label=\roman*)]
\item $T(\CV)$ is a nonnegative set of $\KK$;
\item there exists $\rho\in\RR$ such that $T^\#T+\rho V^\#V$ is positive semidefinite.
\end{enumerate}
\end{prop}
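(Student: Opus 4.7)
Setting $A := T^\#T$ and $B := V^\#V$, which are selfadjoint (but in general indefinite) operators on $\HH$ satisfying $\PI{Ax}{x}_\HH = \K{Tx}{Tx}_\KK$ and $\PI{Bx}{x}_\HH = \K{Vx}{Vx}_\EE$, hypothesis (i) reads: $\PI{Ax}{x} \geq 0$ whenever $\PI{Bx}{x} = 0$. The implication (ii)$\Rightarrow$(i) is immediate, since for $x \in \CV$ one has $\PI{Bx}{x} = 0$, hence $\K{Tx}{Tx}_\KK = \PI{Ax}{x} = \PI{(A + \rho B)x}{x} \geq 0$.

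The plan for (i)$\Rightarrow$(ii) is a two-form S-procedure argument based on the Dines--Brickman convexity of the joint numerical range
\[
W := \setB{(\K{Tx}{Tx}_\KK,\,\K{Vx}{Vx}_\EE) : x \in \HH} \subseteq \RR^2.
\]
Convexity of $W$ is classical for pairs of Hermitian forms on a complex space of dimension $\geq 2$, and the infinite-dimensional case follows by restricting $A$ and $B$ to the two-dimensional subspace $\Span\{x_1,x_2\}$ spanned by any two chosen vectors.

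Once $W$ is known to be convex, the plan is to consider the extremal slopes
\[
\rho_- := -\inf_{x \in \PV}\frac{\K{Tx}{Tx}_\KK}{\K{Vx}{Vx}_\EE}, \qquad \rho_+ := -\sup_{y \in \PVV}\frac{\K{Ty}{Ty}_\KK}{\K{Vy}{Vy}_\EE},
\]
and first to show that they are finite real numbers with $\rho_- \leq \rho_+$. Given $x \in \PV$ and $y \in \PVV$, the segment in $\RR^2$ joining $(\PI{Ay}{y},\PI{By}{y})$ and $(\PI{Ax}{x},\PI{Bx}{x})$ crosses the horizontal axis at a point $(s^*,0)$ which lies in $W$ by convexity, so hypothesis (i) forces $s^* \geq 0$. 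A brief computation of $s^*$, followed by division by the negative quantity $\PI{Bx}{x}\PI{By}{y}$, rearranges this to $\K{Ty}{Ty}_\KK/\K{Vy}{Vy}_\EE \leq \K{Tx}{Tx}_\KK/\K{Vx}{Vx}_\EE$. Taking supremum over $y \in \PVV$ and infimum over $x \in \PV$ then produces both the finiteness of $\rho_\pm$ and the inequality $\rho_- \leq \rho_+$.

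To finish, pick any $\rho \in [\rho_-,\rho_+]$ and verify $A + \rho B \geq 0$ by partitioning $\HH$ into $\PV$, $\PVV$ and $\CV$: on $\CV$ apply (i) directly and use $\PI{Bx}{x}=0$; on $\PV$ multiply $\rho \geq \rho_-$ by $\PI{Bx}{x} > 0$ and rearrange; on $\PVV$ use the symmetric argument based on $\rho \leq \rho_+$. The main obstacle in this plan is the Dines--Brickman convexity of $W$, which is the one nontrivial input; everything else reduces to elementary planar convex geometry.
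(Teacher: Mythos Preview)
The paper does not give its own proof of this proposition; it simply refers the reader to Lemma~1.35 and Corollary~1.36 in Azizov--Iokhvidov, so there is no in-house argument to compare against. Your S-procedure proof via the Dines--Brickman convexity of the joint range
\[
W=\setB{(\PI{Ax}{x},\PI{Bx}{x}) : x\in\HH}
\]
is correct and is the standard modern route to this type of statement: the reduction to two-dimensional complex subspaces gives convexity of $W$ (as a cone over the Toeplitz--Hausdorff set), and the remainder is, as you say, planar convex geometry. One small point worth making explicit: your argument for (i)$\Rightarrow$(ii) silently uses that both $\PV$ and $\PVV$ are nonempty, i.e.\ that $V^\#V$ is indefinite. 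This is precisely the standing hypothesis the paper declares immediately before stating the proposition, so there is no gap, but you should flag it so the proof is self-contained.
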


Let us also consider the subsets of $\HH$ where the quadratic form associated to $V^\#V$ takes positive and negative values:
\begin{equation*}
\PV:=\set{x\in\HH\,:\,\K{Vx}{Vx}>0}\quad\text{and}\quad\PVV:=\set{x\in\HH\,:\,\K{Vx}{Vx}<0}.
\end{equation*}

\begin{cor}\label{cor:intervalo}
If $T(\CV)$ is a nonnegative set of $\KK$, then
\begin{equation*}
\rho_-:=-\inf_{x\in\PV}\frac{\K{Tx}{Tx}}{\K{Vx}{Vx}}<+\infty\quad,\quad\rho_+:=-\sup_{x\in\PVV}\frac{\K{Tx}{Tx}}{\K{Vx}{Vx}}>-\infty,
\end{equation*}
and $\rho_-\leq\rho_+$. In this case, 
\[
T^\#T+\rho V^\#V\text{ is positive semidefinite if and only if }\rho\in[\rho_-,\rho_+].
\]
\end{cor}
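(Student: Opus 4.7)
The plan is to use Proposition \ref{prop:asimov} as a black box: by hypothesis $T(\CV)$ is nonnegative, so some $\rho_0\in\RR$ exists with $T^\#T+\rho_0 V^\#V\geq 0$. I would then show that any such $\rho_0$ automatically lies between $\rho_-$ and $\rho_+$; this will simultaneously prove that $\rho_-<+\infty$, $\rho_+>-\infty$, and $\rho_-\leq\rho_+$.

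Concretely, for every $x\in\HH$ we have $\K{Tx}{Tx}+\rho_0\K{Vx}{Vx}\geq 0$. If $x\in\PV$, dividing by $\K{Vx}{Vx}>0$ gives $\K{Tx}{Tx}/\K{Vx}{Vx}\geq -\rho_0$; taking infimum yields $-\rho_-\geq -\rho_0$, i.e.\ $\rho_-\leq \rho_0$. Symmetrically, if $x\in\PVV$, dividing by the negative quantity $\K{Vx}{Vx}$ flips the inequality, giving $\K{Tx}{Tx}/\K{Vx}{Vx}\leq -\rho_0$; taking supremum yields $\rho_0\leq \rho_+$. Chaining these, $\rho_-\leq \rho_0\leq \rho_+$, so both extremal quantities are finite and $\rho_-\leq \rho_+$.

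For the final equivalence, one direction is exactly the computation just performed: if $T^\#T+\rho V^\#V\geq 0$, then $\rho\in[\rho_-,\rho_+]$. For the converse I would split $\HH$ into the three pieces $\PV$, $\PVV$, and $\CV$ and check nonnegativity of $\K{Tx}{Tx}+\rho\K{Vx}{Vx}$ in each. On $\CV$ we use directly the hypothesis that $T(\CV)$ is nonnegative. On $\PV$, the definition of infimum gives $\K{Tx}{Tx}\geq -\rho_-\K{Vx}{Vx}$, and since $\rho\geq\rho_-$ and $\K{Vx}{Vx}>0$ the inequality persists with $\rho$ in place of $\rho_-$. The argument on $\PVV$ is analogous, using $\rho\leq\rho_+$ together with $\K{Vx}{Vx}<0$ to flip the sign correctly.

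No step is particularly hard, but the only subtlety is to be careful with the sign flip when dividing by $\K{Vx}{Vx}$ on $\PVV$; writing the estimates in the form $\K{Tx}{Tx}+\rho_\pm\K{Vx}{Vx}\geq 0$ (rather than dividing) avoids bookkeeping errors and makes the monotonicity in $\rho$ on each of the two regions transparent.
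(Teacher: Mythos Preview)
Your argument is correct. The paper does not actually supply a proof of Corollary~\ref{cor:intervalo}; it is stated immediately after Proposition~\ref{prop:asimov} as a direct consequence, with the surrounding material attributed to \cite{Azizov} and \cite{GZMMP21}. Your derivation---use Proposition~\ref{prop:asimov} to obtain one admissible $\rho_0$, then bound any admissible $\rho$ between $\rho_-$ and $\rho_+$ by dividing through by $\K{Vx}{Vx}$ on $\PV$ and $\PVV$, and finally check the converse on the three pieces $\PV$, $\PVV$, $\CV$---is exactly the elementary computation one would supply to justify the corollary, and nothing more is needed.
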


\medskip

If $\rho_-\neq\rho_+$, the positive operators $T^\#T+\rho V^\#V$ with $\rho\in (\rho_-,\rho_+)$ share many properties.
We collect here some of the results from \cite{GZMMP21}, which are used along the paper.



\begin{lema}\label{lema:nucleo_abierto} Assume that $T(\CV)$ is a nonnegative set of $\KK$ and that $\rho_-\neq\rho_+$. Then 
\begin{align*}
N(T^\#T+\rho V^\#V) &= N(T^\#T)\cap N(V^\#V),\quad\text{for every}\quad\rho\in(\rho_-,\rho_+).
\end{align*}
\end{lema}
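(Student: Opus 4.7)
The inclusion $N(T^\#T)\cap N(V^\#V)\subseteq N(T^\#T+\rho V^\#V)$ is immediate from linearity, so the content lies in the reverse inclusion. Fix $\rho\in(\rho_-,\rho_+)$ and take $x\in N(T^\#T+\rho V^\#V)$. My plan is to exploit two standard facts: first, for any positive semidefinite $A\in\mc{L}(\HH)$, one has $Ax=0$ if and only if $\PI{Ax}{x}=0$ (the nontrivial direction follows from Cauchy--Schwarz applied to the semi-inner product $(y,z)\mapsto\PI{Ay}{z}$); second, by Corollary \ref{cor:intervalo}, the operator $T^\#T+\lambda V^\#V$ is positive semidefinite for \emph{every} $\lambda\in[\rho_-,\rho_+]$, which gives me a one-parameter family of positivity constraints at my disposal.

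First I would pair the identity $(T^\#T+\rho V^\#V)x=0$ with $x$, obtaining
\[
\K{Tx}{Tx}_\KK+\rho\K{Vx}{Vx}_\EE=0.
\]
Because $\rho$ lies in the interior of $[\rho_-,\rho_+]$, I can pick $\rho'=\rho+\varepsilon$ and $\rho''=\rho-\varepsilon$ both inside the interval for sufficiently small $\varepsilon>0$. Positive semidefiniteness of $T^\#T+\rho' V^\#V$ and $T^\#T+\rho'' V^\#V$ evaluated at $x$ yields
\[
\K{Tx}{Tx}_\KK+\rho'\K{Vx}{Vx}_\EE\geq 0\quad\text{and}\quad\K{Tx}{Tx}_\KK+\rho''\K{Vx}{Vx}_\EE\geq 0.
\]
Subtracting the previous equality gives $\varepsilon\K{Vx}{Vx}_\EE\geq 0$ and $-\varepsilon\K{Vx}{Vx}_\EE\geq 0$, so $\K{Vx}{Vx}_\EE=0$ and consequently $\K{Tx}{Tx}_\KK=0$.

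At this point I would apply the first fact above to $A=T^\#T+\rho' V^\#V\geq0$: the computation
\[
\PI{Ax}{x}=\K{Tx}{Tx}_\KK+\rho'\K{Vx}{Vx}_\EE=0
\]
forces $(T^\#T+\rho' V^\#V)x=0$. Subtracting this from $(T^\#T+\rho V^\#V)x=0$ leaves $(\rho'-\rho)V^\#Vx=0$, and since $\rho'\neq\rho$ I conclude $V^\#Vx=0$, whence $T^\#Tx=-\rho V^\#Vx=0$ as well. This places $x$ in $N(T^\#T)\cap N(V^\#V)$ and closes the argument.

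The only delicate ingredient is the ability to perturb $\rho$ on both sides inside the interval, which is exactly what the hypothesis $\rho_-\neq\rho_+$ (together with $\rho\in(\rho_-,\rho_+)$) guarantees; I expect no real obstacle beyond keeping track of the two uses of positivity (one to pin down the sign of $\K{Vx}{Vx}_\EE$, another to upgrade a scalar vanishing to the vectorial vanishing of $(T^\#T+\rho' V^\#V)x$).
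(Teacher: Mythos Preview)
Your argument is correct. The two ingredients you isolate---the one-parameter family of positivity inequalities available because $\rho$ is an interior point, and the fact that $\PI{Ax}{x}=0$ forces $Ax=0$ for $A\geq 0$---are exactly what is needed, and you combine them cleanly.

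As for comparison: the paper does not actually prove this lemma. It is quoted, together with Propositions~\ref{prop:rango_raices} and~\ref{prop:uniformemente}, from the companion paper \cite{GZMMP21} (see the sentence ``We collect here some of the results from \cite{GZMMP21}''). So there is no in-text proof to compare against. That said, your argument is the standard one and almost certainly coincides with, or is a minor variant of, the proof in the cited reference: the key observation that for $\rho$ strictly between $\rho_-$ and $\rho_+$ one can perturb in both directions and thereby pin down $\K{Vx}{Vx}_\EE=0$ is essentially forced.
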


\begin{prop}\label{prop:rango_raices} Assume that $T(\CV)$ is a nonnegative set of $\KK$ and that $\rho_-\neq\rho_+$. Then
\[
R\big((T^\#T+\rho V^\#V)^{1/2}\big)=R\big((T^\#T+\rho' V^\#V)^{1/2}\big),\quad\quad\text{for every $\rho,\rho'\in(\rho_-,\rho_+)$.}
\]
Also, $R\big((T^\#T+\rho_\pm V^\#V)^{1/2}\big)\subseteq R\big((T^\#T+\rho V^\#V)^{1/2}\big)$, for every $\rho\in(\rho_-,\rho_+)$.
\end{prop}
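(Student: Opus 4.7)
The plan is to derive both assertions from Douglas' range inclusion theorem: for $A,B\in\mc{L}(\HH)^+$, one has $R(A^{1/2})\subseteq R(B^{1/2})$ if and only if $A\leq c B$ for some $c\geq 0$. Writing $A_\rho := T^\#T+\rho V^\#V$, the task reduces to producing operator inequalities of the form $A_{\rho_1}\leq c A_{\rho_2}$. The key lever is that $\rho\mapsto A_\rho$ is affine, so
\[
A_{\mu\rho_2+(1-\mu)\sigma}=\mu A_{\rho_2}+(1-\mu)A_\sigma
\]
for every $\mu\in\RR$, while Corollary~\ref{cor:intervalo} guarantees $A_\sigma\geq 0$ for every $\sigma\in[\rho_-,\rho_+]$.

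The main step is this: fix $\rho_1\in[\rho_-,\rho_+]$ and $\rho_2\in(\rho_-,\rho_+)$ with $\rho_1\neq\rho_2$. Since $\rho_-<\rho_+$ and $\rho_2$ is interior, I can choose $\sigma\in[\rho_-,\rho_+]$ lying strictly on the opposite side of $\rho_2$ from $\rho_1$ (for instance $\sigma=\rho_+$ when $\rho_1<\rho_2$ and $\sigma=\rho_-$ otherwise). The unique $\mu$ determined by $\rho_1=\mu\rho_2+(1-\mu)\sigma$ is then strictly greater than $1$, so
\[
A_{\rho_1}=\mu A_{\rho_2}-(\mu-1)A_\sigma\leq\mu A_{\rho_2},
\]
using $A_\sigma\geq 0$. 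Douglas' theorem then yields $R(A_{\rho_1}^{1/2})\subseteq R(A_{\rho_2}^{1/2})$.

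Applying this recipe with $\rho_1,\rho_2\in(\rho_-,\rho_+)$ in both orders gives equality of ranges on the open interval, while taking $\rho_2=\rho\in(\rho_-,\rho_+)$ and $\rho_1=\rho_\pm$ gives the one-sided inclusion at the endpoints. The only real obstacle is bookkeeping: one has to verify in each sign configuration that $\mu>1$ (equivalently $1-\mu<0$) so that $A_\sigma$ really contributes with the correct sign and can be dropped. Incidentally, the same picture explains why equality fails at the endpoints, since there is no $\sigma\in[\rho_-,\rho_+]$ strictly beyond $\rho_\pm$ with which to run the reverse comparison.
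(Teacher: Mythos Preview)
Your argument is correct. The affine identity $A_{\rho_1}=\mu A_{\rho_2}+(1-\mu)A_\sigma$ together with the positivity of $A_\sigma$ for $\sigma\in[\rho_-,\rho_+]$ does give $A_{\rho_1}\leq \mu A_{\rho_2}$ whenever $\mu>1$, and your case analysis (choosing $\sigma=\rho_+$ when $\rho_1<\rho_2$ and $\sigma=\rho_-$ when $\rho_1>\rho_2$) indeed forces $\mu=(\rho_1-\sigma)/(\rho_2-\sigma)>1$ in both cases. Douglas' factorization theorem then delivers the range inclusions exactly as you claim.

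As for comparison with the paper: there is nothing to compare against. The proposition is stated here without proof and attributed to the companion paper \cite{GZMMP21}; the present article only \emph{uses} the result. Your self-contained argument via the affine structure of the pencil and Douglas' theorem is the natural one and is almost certainly close in spirit to what appears in \cite{GZMMP21}, since the same convex-combination trick is the standard device for comparing members of a linear pencil of positive operators.
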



\begin{prop}\label{prop:uniformemente} 
The following conditions are equivalent:
\begin{enumerate}[label=\roman*)]
\item there exists $\alpha>0$ such that $\K{Ty}{Ty}\geq\alpha\|y\|^2$ for every $y\in\CV$;
\item there exists $\rho\in\RR$ such that $T^\#T+\rho V^\#V$ is a positive definite operator.
\end{enumerate}
In this case, $\CT\cap\CV=\set{0}$.
\end{prop}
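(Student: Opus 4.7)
The implication (ii)$\Rightarrow$(i) is immediate: if $T^\#T+\rho V^\#V\geq \alpha I$ for some $\alpha>0$, then any $y\in\CV$ satisfies $\PI{V^\#Vy}{y}=\K{Vy}{Vy}_\EE=0$, so
\[
\K{Ty}{Ty}_\KK \;=\; \PI{T^\#Ty}{y} \;=\; \PI{(T^\#T+\rho V^\#V)y}{y} \;\geq\; \alpha\|y\|^2.
\]
The substantive implication is (i)$\Rightarrow$(ii), and my plan is to reduce it to Proposition \ref{prop:asimov} by absorbing the shift $-\alpha I$ into a fresh operator placed in the role of $T$.

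Assuming (i), set $S:=T^\#T-\alpha I$, a bounded self-adjoint operator on $\HH$. The Borel functional calculus splits $S=S_+-S_-$ with $S_\pm\geq 0$ and $S_+S_-=0$. Equip $\wt\KK:=\HH\dotplus\HH$ with the indefinite inner product $\K{(a_1,a_2)}{(b_1,b_2)}_{\wt\KK}:=\PI{a_1}{b_1}-\PI{a_2}{b_2}$, turning it into a Krein space with the obvious fundamental decomposition, and define $\wt T\in\mc{L}(\HH,\wt\KK)$ by $\wt T y:=(S_+^{1/2}y,\,S_-^{1/2}y)$. A short computation of the Krein adjoint gives $\wt T^\#(a,b)=S_+^{1/2}a-S_-^{1/2}b$, hence $\wt T^\#\wt T=S_+-S_-=T^\#T-\alpha I$. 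Hypothesis (i) then translates exactly to the statement that $\wt T(\CV)$ is a nonnegative subset of $\wt\KK$: for $y\in\CV$,
\[
\K{\wt T y}{\wt T y}_{\wt\KK} \;=\; \PI{(T^\#T-\alpha I)y}{y} \;=\; \K{Ty}{Ty}_\KK-\alpha\|y\|^2 \;\geq\; 0.
\]

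Now Proposition \ref{prop:asimov} applied to the pair $(\wt T,V)$ (the standing assumption that $V^\#V$ is indefinite is unchanged) produces $\rho\in\RR$ with $\wt T^\#\wt T+\rho V^\#V\geq 0$; unfolding the definition of $\wt T$ this reads $T^\#T+\rho V^\#V\geq \alpha I$, which is the required positive definite operator, proving (ii). For the last assertion, if $y\in\CT\cap\CV$ then (i) forces $0=\K{Ty}{Ty}_\KK\geq\alpha\|y\|^2$, so $y=0$. The only non-mechanical step is spotting the Krein factorization $T^\#T-\alpha I=\wt T^\#\wt T$ that puts Proposition \ref{prop:asimov} back into play; once that is set up the argument is routine, so I do not anticipate serious obstacles beyond verifying this factorization.
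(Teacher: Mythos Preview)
The paper does not actually prove this proposition; it is one of several results imported without proof from \cite{GZMMP21} in the preliminaries (see the sentence preceding Lemma~\ref{lema:nucleo_abierto}). So there is no in-paper argument to compare against, and I evaluate your proof on its own merits.

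Your argument is correct. The implication (ii)$\Rightarrow$(i) is immediate as you say. For (i)$\Rightarrow$(ii), the Krein factorization $T^\#T-\alpha I=\wt T^\#\wt T$ via the positive and negative parts of the self-adjoint operator $S=T^\#T-\alpha I$ is exactly the right device: since the domain $\HH$ carries the definite inner product, the adjoint computation $\wt T^\#(a,b)=S_+^{1/2}a-S_-^{1/2}b$ and hence $\wt T^\#\wt T=S_+-S_-=S$ is straightforward. Proposition~\ref{prop:asimov} is stated under the sole standing hypothesis that $V^\#V$ is indefinite and places no closed-range or surjectivity requirement on the first operator, so it applies verbatim to the pair $(\wt T,V)$ and produces $\rho\in\RR$ with $T^\#T-\alpha I+\rho V^\#V\geq 0$, i.e.\ $T^\#T+\rho V^\#V\geq\alpha I$. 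The coda on $\CT\cap\CV$ is immediate from (i). No gaps.
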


\section{Indefinite least squares problems with a quadratic constraint}\label{least squares}

From now on $(\HH,\PI{\cdot}{\cdot})$ denotes a Hilbert space, $(\KK,\K{\cdot}{\cdot}_\KK)$ and $(\EE,\K{\cdot}{\cdot}_\EE)$
denote Krein spaces; and  $T\in\mc{L}(\HH,\KK)$ has closed range and  $V\in \mc{L}(\HH,\EE)$ is surjective. 
The quadratically constrained ILSP under consideration is the following:

\begin{customprob}{1'}\label{pb 1'}
Given $x_0\in\HH$ and $w_0\in\KK$, analyze the existence of
\begin{equation*}
\min_{y\in\CV} \K{T(x_0 + y)-w_0}{T(x_0 + y)-w_0}_\KK,
\end{equation*}
and if the minimum exists, find the set of arguments at which it is attained.
\end{customprob}

Problem \ref{pb 1} is equivalent to
Problem 1'. In fact, Problem 1' with initial data $(w_0,x_0)$ is the same as Problem \ref{pb 1} with the initial data $(w_0,z_0)$
where $z_0:=Vx_0$. Conversely, Problem \ref{pb 1} with initial data $(w_0,z_0)$ can be rephrased as Problem 1' with initial data
$(w_0,x_0)$ where $x_0\in\HH$ is any vector such that $Vx_0=z_0$.

Moreover, the set of solutions to both problems is the same and we refer to them indistinctly as $\ZZ(w_0,z_0)$.

We begin by studying under which conditions the infimum among the values of the
objective function $x\mapsto\K{Tx-w_0}{Tx-w_0}$ over the set $x_0+\CV$ is finite.

\begin{prop}\label{prop infimo} 
Given $x_0\in\HH$ and $w_0\in\KK$, the following conditions are equivalent:
\begin{enumerate}[label=\roman*)]
\item $\displaystyle{\inf_{y\in\CV}\K{T(x_0+y)-w_0}{T(x_0+y)-w_0}>-\infty}$;\hspace*{\fill}\eqnum\label{ecu infimo}
\item there exists a constant $c\geq0$ such that
\begin{equation}\label{ecu desig K}
\modulo{\K{Tx_0-w_0}{Ty}}^2\leq c\K{Ty}{Ty},\quad\quad\textrm{for every $y\in\CV$}.
\end{equation}
\end{enumerate}
\end{prop}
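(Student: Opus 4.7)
The plan is to expand the objective functional and exploit a cone structure. Writing out
\[
\K{T(x_0+y)-w_0}{T(x_0+y)-w_0} = \K{Tx_0-w_0}{Tx_0-w_0} + 2\real\K{Tx_0-w_0}{Ty} + \K{Ty}{Ty},
\]
the first summand is a constant independent of $y$, so \eqref{ecu infimo} is equivalent to the function $g(y):=2\real\K{Tx_0-w_0}{Ty}+\K{Ty}{Ty}$ being bounded below on $\CV$. The engine of the argument is that $\CV$ is a complex cone: if $y\in\CV$ and $t\in\CC$, then $\K{V(ty)}{V(ty)}=|t|^2\K{Vy}{Vy}=0$, so $ty\in\CV$.

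For the implication $(ii)\Rightarrow(i)$, I would first observe that \eqref{ecu desig K} forces $\K{Ty}{Ty}\geq 0$ for every $y\in\CV$, so the square roots below are well defined. A straightforward AM--GM estimate then yields
\[
g(y) \,\geq\, -2\sqrt{c\,\K{Ty}{Ty}} + \K{Ty}{Ty} \,=\, \bigl(\K{Ty}{Ty}^{1/2}-\sqrt c\bigr)^2 - c \,\geq\, -c,
\]
so the infimum in \eqref{ecu infimo} is bounded below by $\K{Tx_0-w_0}{Tx_0-w_0}-c$.

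The substantive direction is $(i)\Rightarrow(ii)$. Assuming $g\geq -M$ on $\CV$ for some $M\geq 0$, fix $y\in\CV$ and abbreviate $\alpha=\K{Tx_0-w_0}{Ty}$, $\beta=\K{Ty}{Ty}$. Since $ty\in\CV$ for every $t\in\CC$, the bound
\[
2\real(\bar t\,\alpha)+|t|^2\beta \,=\, g(ty)\,\geq\, -M
\]
holds uniformly in $t\in\CC$. The plan is a case analysis on the sign of $\beta$: the case $\beta<0$ is ruled out by taking $t>0$ real and letting $t\to+\infty$, so $T(\CV)$ is nonnegative; in the neutral case $\beta=0$, if $\alpha\neq 0$ then choosing $t=-s\alpha/|\alpha|$ and letting $s\to+\infty$ drives $g(ty)\to-\infty$, contradicting the bound, so $\alpha=0$ and \eqref{ecu desig K} is trivial at $y$; finally, in the case $\beta>0$, a standard real-variable minimization of $2\real(\bar t\,\alpha)+|t|^2\beta$ over $t\in\CC$ is attained at $t=-\alpha/\beta$ with value $-|\alpha|^2/\beta$, so $-|\alpha|^2/\beta\geq -M$, which is \eqref{ecu desig K} with $c=M$.

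The main obstacle I anticipate is the neutral subcase $\K{Ty}{Ty}=0$: the completion-of-the-square argument used when $\beta>0$ collapses there, and one must instead invoke the homogeneity of $\CV$ under complex scalars to push $|t|\to\infty$ and contradict the lower bound. Without the cone structure of $\CV$ this subcase would have no handle, and the reduction of the bound on $g$ to a pointwise inequality on $y$ would fail.
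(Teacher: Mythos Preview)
Your proof is correct and follows essentially the same approach as the paper: both expand the quadratic, exploit that $\CV$ is a (complex) cone, and reduce the lower bound on the objective to a one-parameter quadratic inequality in the scaling variable. The only cosmetic difference is that the paper scales by real $t$ and uses the discriminant condition $b^2-4ac\leq 0$ (handling the complex phase of $\K{Tx_0-w_0}{Ty}$ in a separate step), whereas you scale by complex $t$ and minimize directly to obtain $-|\alpha|^2/\beta\geq -M$; the content is identical.
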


\begin{proof} 
Suppose that $\inf_{y\in\CV}\K{T(x_0+y)-w_0}{T(x_0+y)-w_0}=k>-\infty$.\\ Then, for every $y\in\CV$,
\begin{equation}\label{ecu desig K 1}
\K{Ty}{Ty}+2\real\K{Tx_0-w_0}{Ty}+\K{Tx_0-w_0}{Tx_0-w_0}-k\geq0.
\end{equation} 
Replacing $y$ by $ty$ for a fixed $y\in\CV$ and $t\in\mathbb{R}$, \eqref{ecu desig K 1} gives
\begin{equation}\label{parabola real K'}
a t^2 + bt + c\geq 0 \quad\quad\textrm{for every $t\in\RR$},
\end{equation}
where $a=\K{Ty}{Ty}$, $b=2\real\K{Tx_0-w_0}{Ty}$ and $c=\K{Tx_0-w_0}{Tx_0-w_0}-k\geq0$.
But \eqref{parabola real K'} holds if and only if $a\geq 0$ and $b^2-4ac\leq 0$, i.e.
\begin{equation*}
\big(\real\K{Tx_0-w_0}{Ty}\big)^2\leq c\K{Ty}{Ty},\quad\quad\textrm{for every $y\in\CV$}.
\end{equation*}
Now, if $\K{Tx_0-w_0}{Ty}=e^{i\theta}\modulo{\K{Tx_0-w_0}{Ty}}$, with $\theta\in[0,2\pi)$, set $v:=e^{i\theta}y\in\CV$,
then $\K{Tv}{Tv}=\K{Ty}{Ty}$ and $\real \K{Tx_0-w_0}{Tv}=\modulo{\K{Tx_0-w_0}{Ty}}$. Therefore, 
\begin{equation*}
\modulo{\K{Tx_0-w_0}{Ty}}^2\leq c\K{Ty}{Ty},\quad\quad\textrm{for every $y\in\CV$}.
\end{equation*}

Conversely, let $c\geq0$ be such that \eqref{ecu desig K} holds. Then $\K{Ty}{Ty}\geq0$ for every $y\in\CV$ and
\begin{equation*}
\big(\real\K{Tx_0-w_0}{Ty}\big)^2\leq\modulo{\K{Tx_0-w_0}{Ty}}^2\leq c\K{Ty}{Ty}.
\end{equation*}
For an arbitrary (fixed) vector $y\in\CV$ define $a$ and $b$ as above. Therefore, $a\geq0$, $b^2-4ac\leq0$, and \eqref{parabola real K'}
follows. Or equivalently, 
\begin{equation*}
\K{T(x_0+ty)-w_0}{T(x_0+ty)-w_0}\geq\K{Tx_0}{Tx_0}-c,
\end{equation*}
where $y\in\CV$ and $t\in\mathbb{R}$. Since $y\in\CV$ is arbitrary, \eqref{ecu infimo} holds.
\end{proof}








In view of Proposition \ref{prop infimo}, we assume that the following hypotheses hold for the
rest of this section.

\begin{hyp}\label{T del cono}
$T^\#T$ and $V^\#V$ are indefinite operators on $\HH$ and 
\[
T(\CV)\ \text{ is a nonnegative set of $\KK$}.
\]
\end{hyp}

If $T^\#T$ is a semidefinite operator then Problem \ref{pb 1'} turns out to be a
least-squares problem with a quadratic constraint instead of an indefinite least-squares problem, and the results below also
hold in this case with some minor adjustments. 



\medskip

The existence of solutions to Problem \ref{pb 1'} is equivalent to the existence of a vector $y_0\in\CV$ such that 
$c:=\K{Ty_0}{Ty_0}$ satisfies \eqref{ecu desig K}.
This is expressed in the next proposition; the proof follows the lines of the proof of
\cite[Proposition 3.1]{JOTA}.

\begin{prop}\label{prop cond 1} Given $(w_0,z_0)\in\KK\x\EE$, let $x_0\in\HH$ be such that $Vx_0=z_0$.
Then, $\ZZ(w_0,z_0)\neq \varnothing$
if and only if there exists $y_0\in\CV$ such that 
	\begin{equation}\label{ecu desig modulo}
	\modulo{\K{Tx_0-w_0}{Ty}}^2 \leq \K{Ty_0}{Ty_0}\K{Ty}{Ty},\quad\quad\textrm{for every $y\in\CV$},
	\end{equation}
with equality when $y=y_0$.

In this case, $x_0+y_0\in\ZZ(w_0,z_0)$ if and only if $y_0\in\CV$ satisfies \eqref{ecu desig modulo} and 
\begin{equation}\label{eq normal}
\K{T(x_0+y_0)-w_0}{Ty_0}=0.
\end{equation}
\end{prop}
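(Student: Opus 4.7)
My plan is to exploit that $\CV$ is closed under both real dilations $y\mapsto ty$ and phase rotations $y\mapsto e^{i\theta}y$, even though $\CV$ is not a subspace. Because Hypothesis \ref{T del cono} gives $\K{Ty}{Ty}\geq 0$ for every $y\in\CV$, the one-variable restrictions of $F(y):=\K{T(x_0+y)-w_0}{T(x_0+y)-w_0}_\KK$ along these two families are nonnegative-leading quadratics, to which the discriminant analysis already used in the proof of Proposition \ref{prop infimo} applies verbatim.

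For the necessity direction I would assume $x_0+y_0\in\ZZ(w_0,z_0)$. For any $y\in\CV$, the parabola $t\mapsto F(ty)$ is bounded below by $F(y_0)$ on $\RR$; its discriminant together with a phase substitution $y\mapsto e^{i\theta}y$ delivers the bound
\[
|\K{Tx_0-w_0}{Ty}|^{2}\leq \K{Ty}{Ty}\,c^{*},\qquad c^{*}:=\K{Tx_0-w_0}{Tx_0-w_0}-F(y_0),
\]
for every $y\in\CV$. Next, applying minimality along the circle $\theta\mapsto F(e^{i\theta}y_0)$ forces $\K{Tx_0-w_0}{Ty_0}$ to be a nonpositive real number $-r$ with $r=|\K{Tx_0-w_0}{Ty_0}|$. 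Specialising the previous bound to $y=y_0$ then produces $(\K{Ty_0}{Ty_0}-r)^{2}\leq 0$, which simultaneously identifies $c^{*}=\K{Ty_0}{Ty_0}=r$, yields \eqref{ecu desig modulo} with equality at $y=y_0$, and delivers the normal equation $\K{T(x_0+y_0)-w_0}{Ty_0}=-r+r=0$.

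For the converse, if $y_0\in\CV$ satisfies \eqref{ecu desig modulo} together with \eqref{eq normal}, then \eqref{eq normal} gives $\K{Tx_0-w_0}{Ty_0}=-\K{Ty_0}{Ty_0}$, and a direct expansion reduces $F(y)\geq F(y_0)$ to
\[
\K{Ty}{Ty}+\K{Ty_0}{Ty_0}+2\real\K{Tx_0-w_0}{Ty}\geq 0,\qquad y\in\CV,
\]
which follows from AM--GM on the first two terms combined with \eqref{ecu desig modulo}. To close the existence equivalence starting only from \eqref{ecu desig modulo} plus equality at $y=y_0$, I would introduce the phase modification $\widetilde{y}_0:=-e^{i\phi}y_0\in\CV$, where $\phi$ is the argument of $\K{Tx_0-w_0}{Ty_0}$; this preserves \eqref{ecu desig modulo} (since $\K{T\widetilde{y}_0}{T\widetilde{y}_0}=\K{Ty_0}{Ty_0}$) and makes $\K{Tx_0-w_0}{T\widetilde{y}_0}$ equal to $-|\K{Tx_0-w_0}{Ty_0}|=-\K{Ty_0}{Ty_0}$ via the equality hypothesis at $y_0$, so $\widetilde{y}_0$ also satisfies \eqref{eq normal} and hence $x_0+\widetilde{y}_0\in\ZZ(w_0,z_0)$ by the previous step.

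The main obstacle I anticipate lies in the necessity argument: one must combine two a priori independent one-parameter families of test points (real dilations and phase rotations) with the tight discriminant identity $(\K{Ty_0}{Ty_0}-r)^{2}\leq 0$ in order to extract at once the value of $c^{*}$, the equality case of \eqref{ecu desig modulo}, and the normal equation from the bare minimality hypothesis. The sufficiency half is then essentially AM--GM, and the phase-modification bridge between the existence statement and the characterisation is a short computation.
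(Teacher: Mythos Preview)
Your argument is correct and follows essentially the same route the paper has in mind: the paper does not spell out the proof but refers to \cite[Proposition~3.1]{JOTA}, which uses exactly the discriminant-plus-phase-rotation technique already displayed in Proposition~\ref{prop infimo}, together with the AM--GM step for sufficiency. Your phase-modification bridge $\widetilde{y}_0=-e^{i\phi}y_0$ to pass from \eqref{ecu desig modulo} alone to a vector satisfying \eqref{eq normal} is the expected way to close the first equivalence; just note separately the trivial case $\K{Tx_0-w_0}{Ty_0}=0$, where $\phi$ is undefined but \eqref{eq normal} is automatic.
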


Another characterization of the existence of solutions to Problem \ref{pb 1'} can be given by means of a normal equation.
We study first the case of
solutions $\widetilde{x}$ satisfying the stronger constraint $V\wt{x}=z_0$.

\begin{lema}\label{lema:caso_nucleo} Given $(w_0,z_0)\in\KK\x\EE$,
let $\widetilde{x}\in\HH$ such that $V\widetilde{x}=z_0$. Then, 
\[
\widetilde{x}\in \ZZ(w_0,z_0) \ \text{ if and only if } \ T^\#T\widetilde{x}=T^\#w_0.
\]
In this case, $\ZZ(w_0,z_0)=\widetilde{x}+\CT\cap\CV$.
\end{lema}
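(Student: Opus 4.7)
The plan is to specialize Proposition \ref{prop cond 1} to the base point $x_0:=\widetilde{x}$; the hypothesis $V\widetilde{x}=z_0$ then lets us test the candidate $y_0=0\in\CV$. With this choice the right-hand side of \eqref{ecu desig modulo} vanishes, so the inequality there forces
\[
\K{T\widetilde{x}-w_0}{Ty}_\KK=0\quad\text{for every }y\in\CV,
\]
while both the equality case of \eqref{ecu desig modulo} and the normal equation \eqref{eq normal} collapse to $0=0$. Hence $\widetilde{x}\in\ZZ(w_0,z_0)$ is equivalent to the displayed orthogonality condition.

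Moving $T$ across via the Krein adjoint, the displayed relation reads $\PI{T^\#(T\widetilde{x}-w_0)}{y}=0$ for every $y\in\CV$. The key step is the observation that $\Span(\CV)=\HH$ whenever $V^\#V$ is indefinite (as assumed in Hypothesis \ref{T del cono}): given $u\in\HH$ with $\K{Vu}{Vu}_\EE>0$ and any $v$ with $\K{Vv}{Vv}_\EE<0$, the real polynomial $t\mapsto\K{V(u+tv)}{V(u+tv)}_\EE$ is positive at $t=0$ and has negative leading coefficient, so it has two real roots of opposite sign, which expresses $u$ as a linear combination of two elements of $\CV$; vectors in $\CV$ and vectors with $\K{Vu}{Vu}_\EE<0$ are handled analogously. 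Consequently the orthogonality upgrades to $T^\#(T\widetilde{x}-w_0)=0$, i.e.\ $T^\#T\widetilde{x}=T^\#w_0$. The converse is immediate: if $T^\#T\widetilde{x}=T^\#w_0$, then $\K{T\widetilde{x}-w_0}{Ty}_\KK=0$ for \emph{every} $y\in\HH$, and Proposition \ref{prop cond 1} applies trivially with $y_0=0$.

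For the description $\ZZ(w_0,z_0)=\widetilde{x}+\CT\cap\CV$, take $\widetilde{x}'\in\ZZ(w_0,z_0)$ and set $y:=\widetilde{x}'-\widetilde{x}$. Feasibility of $\widetilde{x}'$ combined with $V\widetilde{x}=z_0$ gives $\K{Vy}{Vy}_\EE=0$, so $y\in\CV$. Expanding and using the orthogonality already established yields
\[
\K{T\widetilde{x}'-w_0}{T\widetilde{x}'-w_0}_\KK=\K{T\widetilde{x}-w_0}{T\widetilde{x}-w_0}_\KK+\K{Ty}{Ty}_\KK.
\]
Since $y\in\CV$ and $T(\CV)$ is nonnegative by Hypothesis \ref{T del cono}, the extra term $\K{Ty}{Ty}_\KK$ is nonnegative and must vanish for $\widetilde{x}'$ to attain the minimum, forcing $y\in\CT\cap\CV$. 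The reverse inclusion is immediate from the same identity.

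The main obstacle is precisely the upgrade from the orthogonality $T^\#(T\widetilde{x}-w_0)\perp\CV$ to the normal equation $T^\#T\widetilde{x}=T^\#w_0$; this is where the indefiniteness of $V^\#V$ is essential, since if $V^\#V$ were semidefinite then $\CV$ would reduce to $N(V)$ and the orthogonality would only yield $T^\#(T\widetilde{x}-w_0)\in N(V)^\bot$, a strictly weaker conclusion.
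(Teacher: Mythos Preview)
Your proof is correct and follows essentially the same route as the paper: specialize Proposition~\ref{prop cond 1} at $x_0=\widetilde{x}$ with $y_0=0$, deduce $T^\#(T\widetilde{x}-w_0)\perp\CV$, and then use that $\CV$ spans $\HH$ (the paper writes this as $\CV^\bot=\{0\}$ without justification, whereas you supply the quadratic-polynomial argument). The description of $\ZZ(w_0,z_0)$ via the expansion $\K{T\widetilde{x}'-w_0}{T\widetilde{x}'-w_0}_\KK=\K{T\widetilde{x}-w_0}{T\widetilde{x}-w_0}_\KK+\K{Ty}{Ty}_\KK$ is exactly the paper's computation; the only cosmetic difference is that the paper does the converse by this direct expansion rather than by re-invoking Proposition~\ref{prop cond 1}.
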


\begin{proof}
Suppose that $\widetilde{x}\in \ZZ(w_0,z_0)$, and let $y\in\CV$.
By Proposition \ref{prop cond 1},
\begin{equation*}
\modulo{\K{T\widetilde{x}-w_0}{Ty}}^2\leq\K{T\widetilde{y}_0}{T\widetilde{y}_0}\K{Ty}{Ty},\quad\quad\text{for all $y\in\CV$},
\end{equation*}
where $\widetilde{y}_0$ is any vector in $\CV$ such that
\[
\min_{y\in\CV}\ \K{T(\widetilde{x}+y)-w_0}{T(\widetilde{x}+y)-w_0}=\K{T(\widetilde{x}+\widetilde{y}_0)-w_0}{T(\widetilde{x}+\widetilde{y}_0)-w_0}.
\]
But since $\widetilde{x}\in\ZZ(w_0,z_0)$, we can take $\widetilde{y}_0=0$
and thus $\K{T\widetilde{x}-w_0}{Ty}=0$ for all $y\in\CV$. Hence, $T^\#(T\widetilde{x}-w_0)\in\CV^\bot=\set{0}$.

Conversely, assume that $T^\#T\widetilde{x}=T^\#w_0$. Let $y\in\CV$, then
\begin{align*}
& \hspace{-36pt}\K{T(\widetilde{x}+y)-w_0}{T(\widetilde{x}+y)-w_0} \\&=\K{T\widetilde{x}-w_0}{T\widetilde{x}-w_0}+\K{Ty}{Ty}+2\,\real\K{T\widetilde{x}-w_0}{Ty}\nonumber\\
&=\K{T\widetilde{x}-w_0}{T\widetilde{x}-w_0}+\K{Ty}{Ty}\nonumber\\
&\geq\K{T\widetilde{x}-w_0}{T\widetilde{x}-w_0},
\end{align*}
because $T(\CV)$ is a nonnegative set. Then $\widetilde{x}\in \ZZ(w_0,z_0)$. Moreover, the minimum
is attained if and only if $y\in\CT\cap\CV$.
\end{proof}

The following theorem establishes the normal equation that characterizes the solutions to Problem \ref{pb 1} in the general case. 
According to Hypothesis \ref{T del cono}, the parameters $\rho_\pm$ introduced in Corollary \ref{cor:intervalo}
are well-defined.

\begin{thm}\label{teo:derivada} 
Given $(w_0,z_0)\in\KK\x\EE$, let $\widetilde{x}\in\HH$. Then, $\widetilde{x}\in \ZZ(w_0,z_0)$ if and only if there exists $\lambda\in[\rho_-,\rho_+]$ such that
\begin{equation}\label{eq:eq_normal}
(T^\#T+\lambda V^\#V)\widetilde{x}=T^\#w_0+\lambda V^\#z_0,
\end{equation}
and
\[
\K{V\widetilde{x}-z_0}{V\widetilde{x}-z_0}=0.
\]
\end{thm}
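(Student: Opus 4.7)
My plan is to prove the two implications separately. The reverse implication is a direct Lagrangian-type argument; the forward one reduces, via a homogenization trick, to an application of Proposition \ref{prop:asimov}.

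For the sufficient direction ($\Leftarrow$), I would introduce the auxiliary functional
\[
Q_\lambda(x) := \K{Tx-w_0}{Tx-w_0}_\KK + \lambda\K{Vx-z_0}{Vx-z_0}_\EE,
\]
which expands as
\[
Q_\lambda(x) = \PI{(T^\#T + \lambda V^\#V)x}{x} - 2\real\PI{x}{T^\#w_0 + \lambda V^\#z_0} + C,
\]
for some constant $C$. Since $\lambda\in[\rho_-,\rho_+]$, Corollary \ref{cor:intervalo} gives $T^\#T + \lambda V^\#V\geq 0$, so $Q_\lambda$ is convex quadratic on $\HH$, and the normal equation \eqref{eq:eq_normal} is precisely its first-order optimality condition. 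Hence $\widetilde{x}$ is a global minimizer of $Q_\lambda$. For any $x$ with $\K{Vx-z_0}{Vx-z_0}=0$ the $\lambda$-term vanishes in $Q_\lambda(x)$ (and in $Q_\lambda(\widetilde{x})$), so $Q_\lambda(\widetilde{x})\leq Q_\lambda(x)$ collapses to $\K{T\widetilde{x}-w_0}{T\widetilde{x}-w_0}\leq\K{Tx-w_0}{Tx-w_0}$, i.e.\ $\widetilde{x}\in\ZZ(w_0,z_0)$.

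For the necessary direction ($\Rightarrow$), set $w:=T\widetilde{x}-w_0$ and $z:=V\widetilde{x}-z_0$. The constraint gives $\K{z}{z}_\EE=0$, and expanding minimality at $\widetilde{x}+u$ translates it into the affine-quadratic implication
\[
\K{Vu}{Vu}+2\real\K{z}{Vu}=0\ \Longrightarrow\ \K{Tu}{Tu}+2\real\K{w}{Tu}\geq 0,\quad\forall\,u\in\HH.
\]
To reduce this to a purely quadratic condition suitable for Proposition \ref{prop:asimov}, I would homogenize on $\widehat{\HH}:=\HH\oplus\CC$. Set $\widehat{V}(u,t):=Vu+tz\in\EE$ and $\widehat{T}(u,t):=(Tu+tw,\,t\alpha)\in\widehat{\KK}:=\KK\dotplus\CC_\varepsilon$, where $\CC_\varepsilon$ denotes $\CC$ equipped with the indefinite inner product $\varepsilon\,c\,\overline{c'}$, and $\varepsilon\in\{\pm 1\}$, $\alpha\in\CC$ are chosen so that $\varepsilon|\alpha|^2=-\K{w}{w}_\KK$ (with $\alpha=0$ if $\K{w}{w}=0$). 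A routine expansion yields
\[
\K{\widehat{V}(u,t)}{\widehat{V}(u,t)}_\EE=\K{Vu}{Vu}+2\real(\bar t\K{Vu}{z}),
\]
\[
\K{\widehat{T}(u,t)}{\widehat{T}(u,t)}_{\widehat{\KK}}=\K{Tu}{Tu}+2\real(\bar t\K{Tu}{w}),
\]
as the $|t|^2\K{z}{z}$ and $|t|^2(\K{w}{w}+\varepsilon|\alpha|^2)$ contributions vanish by construction. Scaling $u\mapsto u/t$ for $t\neq 0$, together with Hypothesis \ref{T del cono} for $t=0$, shows that the minimality implication is precisely the statement that $\widehat{T}(\mathcal{C}_{\widehat{V}})$ is a nonnegative subset of $\widehat{\KK}$.

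Applying Proposition \ref{prop:asimov} to $(\widehat{T},\widehat{V})$ produces $\lambda\in\RR$ with $\widehat{T}^\#\widehat{T}+\lambda\widehat{V}^\#\widehat{V}\geq 0$ on $\widehat{\HH}$. Restricting to $t=0$ gives $T^\#T+\lambda V^\#V\geq 0$, forcing $\lambda\in[\rho_-,\rho_+]$ by Corollary \ref{cor:intervalo}. Setting $A_u:=\K{Tu}{Tu}+\lambda\K{Vu}{Vu}\geq 0$ and $B_u:=\K{Tu}{w}_\KK+\lambda\K{Vu}{z}_\EE$, the full augmented inequality reads $A_u+2\real(\bar t\,B_u)\geq 0$ for every $(u,t)\in\widehat{\HH}$; choosing $t=-s B_u$ and letting $s\to+\infty$ forces $B_u=0$ for every $u\in\HH$, which says $\PI{u}{T^\#w+\lambda V^\#z}=0$ for all $u$, i.e.\ $(T^\#T+\lambda V^\#V)\widetilde{x}=T^\#w_0+\lambda V^\#z_0$. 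The principal obstacle is executing the homogenization cleanly: one must engineer $\widehat{\KK}$ so that the parasitic $|t|^2\K{w}{w}$ term is absorbed, and then match the augmented nonnegativity condition with the affine-quadratic version of minimality. Once this is set up, Proposition \ref{prop:asimov} delivers the multiplier $\lambda$ essentially for free, and the inequality in $t$ pins down both $\lambda\in[\rho_-,\rho_+]$ and the normal equation.
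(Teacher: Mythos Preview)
Your proof is correct, but the forward direction takes a genuinely different route from the paper. The paper treats the objective and constraint as Fr\'echet-differentiable functionals on $\HH$, computes $DF$, $DG$, $D^2F$, $D^2G$ explicitly, and then invokes the infinite-dimensional Lagrange multiplier theorem \cite[\S 7.7,~Thm.~2]{Luenberger} to produce $\lambda\in\RR$, followed by the second-order necessary condition \cite[Prop.~2.4.19]{Mardsen} to force $T^\#T+\lambda V^\#V\geq 0$ and hence $\lambda\in[\rho_-,\rho_+]$. This requires singling out the degenerate case $V\widetilde{x}=z_0$ (where $DG(\widetilde{x})=0$) and handling it separately via Lemma~\ref{lema:caso_nucleo}.

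Your homogenization instead absorbs the affine terms into a quadratic problem on $\HH\oplus\CC$ and recycles Proposition~\ref{prop:asimov}, so the multiplier and its range come out in one stroke from the paper's own $S$-procedure, with no appeal to differentiability, no external references, and no case split at $z=0$. The converse direction is essentially the same argument in both: the paper phrases it via Proposition~\ref{prop cond 1} and Cauchy--Schwarz for the positive operator $T^\#T+\lambda V^\#V$, while you phrase it as global minimality of the convex Lagrangian $Q_\lambda$; these are the same computation. Overall your argument is more self-contained and arguably cleaner; the paper's is the textbook constrained-optimization viewpoint.
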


\begin{proof}
Consider the function $F:\HH\ra\RR$ given by $F(x)=\K{Tx-w_0}{Tx-w_0}$. This function is Fr\'echet differentiable at every $x\in\HH$ and its Fr\'echet derivative at $x$ is given by:
\[
DF(x)\ \dx =2\real(\K{Tx-w_0}{T\dx}), \quad \dx\in\HH.
\]
Indeed, given $x\in\HH$,
\begin{eqnarray*}
& & \hspace{-36pt} \frac{\left| F(x + \dx) - F(x) - DF(x)\dx \right|}{\|\dx\|} \\
&=&  \frac{\left| 2\real(\K{Tx-w_0}{T\dx}) + \K{T\dx}{T\dx} - 2\real(\K{Tx-w_0}{T\dx}) \right|}{\|\dx\|} \\ &=& \frac{\left| \K{T\dx}{T\dx} \right|}{\|\dx\|} \leq \|T\|^2\|\dx\|\ra 0,
\end{eqnarray*}
as $\|\dx\|\ra 0$.
Analogously, the function $G:\HH\ra\RR$ given by $G(x)=\K{Vx-z_0}{Vx-z_0}$ is Fr\'echet differentiable at every $x\in\HH$ and its Fr\'echet derivative at $x$ is given by:
\[
DG(x)\ \dx =2\real(\K{Vx-z_0}{V\dx}), \quad \dx\in\HH.
\]
In the same fashion, the second order Fréchet derivatives at $x\in\HH$ are given by
\[
D^2F(x)(\Delta x_1,\Delta x_2)=2\real(\K{T\Delta x_1}{T\Delta x_2}),\quad \Delta x_1,\Delta x_2\in\HH,
\]
\[
D^2G(x)(\Delta x_1,\Delta x_2)=2\real(\K{V\Delta x_1}{V\Delta x_2}),\quad\Delta x_1,\Delta x_2\in\HH.
\]

Now, assume that $\widetilde{x}\in \ZZ(w_0,z_0)$. If $V\widetilde{x}=z_0$, then the result follows from Lemma \ref{lema:caso_nucleo},
choosing an arbitrary $\lambda\in[\rho_-,\rho_+]$. On the other hand, if $V\widetilde{x}\neq z_0$ then
$DG(\widetilde{x})\neq0$. Hence, by \cite[\S 7.7 Thm. 2]{Luenberger} there exists $\lambda\in\RR$ such that $DF(\widetilde{x}) + \lambda DG(\widetilde{x}) = 0$, i.e.	
\begin{align*}
\real(\K{T\widetilde{x}-w_0}{T\dx} + \lambda \K{V\widetilde{x}-z_0}{V\dx})=0,\quad\text{for every $\Delta x\in\HH$}.
\end{align*}
Replacing $\dx$ by $-i\dx$, the imaginary part is also zero. Thus,
\begin{align*}
\K{T\widetilde{x}-w_0}{T\dx} + \lambda \K{V\widetilde{x}-z_0}{V\dx}=0, \quad \text{for every $\dx\in\HH$}.
\end{align*}
Therefore,
\[
(T^\#T+\lambda V^\#V)\widetilde{x}=T^\#w_0+\lambda V^\#z_0.
\]
Moreover, by \cite[Prop. 2.4.19]{Mardsen}, for every $\Delta x\in\HH$,
\begin{align*}
0\leq D^2\parentesis{F+\lambda G}(\widetilde{x})\cdot(\Delta x,\Delta x)&=2\real\K{T\Delta x}{T\Delta x}+\lambda2\real\K{V\Delta x}{V\Delta x} \\
&=2\PI{(T^\#T+\lambda V^\#V)\Delta x}{\Delta x}.
\end{align*}
Hence $T^\#T+\lambda V^\#V\in\mc{L}(\HH)^+$, or equivalently, $\lambda\in[\rho_-,\rho_+]$ (see Corollary \ref{cor:intervalo}).

\smallskip

Conversely, assume that $\K{V\widetilde{x}-z_0}{V\widetilde{x}-z_0}=0$ and that there exists $\lambda\in[\rho_-,\rho_+]$ such that $(T^\#T+\lambda V^\#V)\widetilde{x}=T^\#w_0+\lambda V^\#z_0$.
Given $x_0\in\HH$ such that $Vx_0=z_0$ there exists $y_0\in\CV$ such that
$\widetilde{x}=x_0+y_0$.
Then, 
\begin{equation}\label{eq:eqnormal_2}
(T^\#T+\lambda V^\#V)y_0=-T^\#(Tx_0-w_0),
\end{equation}
and $\K{Tx_0-w_0}{Ty_0}=-\PI{(T^\#T+\lambda V^\#V)y_0}{y_0}=-\K{Ty_0}{Ty_0}$. Hence, $\widetilde{x}=x_0+y_0$ satisfies \eqref{eq normal}.

Since $T^\#T+\lambda V^\#V$ is positive semidefinite for $\lambda\in[\rho_-,\rho_+]$,
\begin{align*}
\modulo{\K{Tx_0-w_0}{Ty}}^2 &=\modulo{\PI{-(T^\#T+\lambda V^\#V)y_0}{y}}^2 \\ &\leq\PI{(T^\#T+\lambda V^\#V)y_0}{y_0}\PI{(T^\#T+\lambda V^\#V)y}{y}\\
&=\K{Ty_0}{Ty_0}\K{Ty}{Ty},
\end{align*}
because $\PI{(T^\#T+\lambda V^\#V)y}{y}=\K{Ty}{Ty}$, for all $y\in\CV$.
Then, the result follows from Proposition \ref{prop cond 1}.
\end{proof}

\medskip

The next example shows how $\ZZ(w_0,z_0)$ depends on the initial data $(w_0,z_0)$, even in the situation in which the spectral decompositions determined by $T^\#T$ and $V^\#V$ are very simple.

\begin{example}\label{ejemplo indef} 
Assume that $\HH$ is decomposed as $\HH=\HH_1\oplus\HH_2\oplus \HH_3$  and consider operators $T\in\mc{L}(\HH,\KK)$ and $V\in\mc{L}(\HH,\EE)$ such that $T^\#T$ and $V^\#V$ can be represented by
\[
T^\#T=\begin{bmatrix}
 I	&	0			&	0\\
0			&	-\frac{1}{2} I			&	0\\
0			&	0			&	I
\end{bmatrix}
\qquad \text{and} \qquad
V^\#V=\begin{bmatrix}
4 I	&	0			&	0\\
0			&	I			&	0\\
0			&	0			&	-I
\end{bmatrix}.
\]
The operator $T^\#T+\rho V^\#V$ is positive semidefinite if and only if $1+4\rho\geq 0$, $\rho-\frac{1}{2}\geq 0$ and $1-\rho\geq 0$. Hence, it is readily seen that
\[
\rho_-=\frac{1}{2}\quad\text{and}\quad\rho_+=1.
\]
In the following we show that $\ZZ(w_0,z_0)\neq\varnothing$ for every $(w_0,z_0)\in\KK\x\EE$, and we describe $\ZZ(w_0,z_0)$ in each case.
Since $\K{Vx}{Vx}=4\|x_1\|^2+\|x_2\|^2-\|x_3\|^2$,
the set $\CV$ can be described as
\begin{equation*}
\CV=\set{y_1+y_2+\parentesis{4\|y_1\|^2+\|y_2\|^2}^{1/2}y_3\,:\,y_1\in\HH_1,\,y_2\in\HH_2,\,y_3\in\St_{3}}, 
\end{equation*}
where $\St_{i}$ stands for the unit sphere in $\HH_i$ for $i=1,2,3$.

Given $(w_0,z_0)\in\KK\x\EE$, let $x_0\in\HH$ be such that $Vx_0=z_0$. If $y\in\CV$, Theorem \ref{teo:derivada} assures that $\wt{x}=x_0 + y\in \ZZ(w_0,z_0)$ if and only if there exists $\lambda\in [\frac{1}{2}, 1]$ such that \eqref{eq:eq_normal} holds, or equivalently, if
\begin{equation}\label{casi casi}
(T^\#T + \lambda V^\#V)y=-T^\#(Tx_0-w_0).
\end{equation}
Writing $-T^\#(Tx_0-w_0)=x_1+x_2+x_3$,
with $x_i\in\HH_i$, and decomposing $y\in\CV$ as
\[
y=y_1+y_2+(4\|y_1\|^2+\|y_2\|^2)^{1/2}y_3,
\]
with $y_1\in\HH_1$, $y_2\in\HH_2$, and $y_3\in \mc{S}_{3}$, \eqref{casi casi} reads as
\begin{align}
(1+4\lambda)y_1 &= x_1, \label{eq:super_1}\\ 
(\lambda-\tfrac{1}{2})y_2 &= x_2, \label{eq:super_2}\\
(1-\lambda)\parentesis{4\|y_1\|^2+\|y_2\|^2}^{1/2}y_3 &=x_3. \label{eq:super_3}
\end{align}
Since $\lambda\in[\frac{1}{2},1]$, \eqref{eq:super_1} says that $y_1=\frac{1}{1+4\lambda}x_1$. If $x_1+x_2+x_3=0$, it is easy to see that $\ZZ(w_0,z_0)=\set{x_0}$. In the following, we study the situations where this is not the case.

\begin{itemize}
\item \textbf{Case 1}: $x_3=0$.

Since $y_3\neq0$, \eqref{eq:super_3} yields $\lambda=1$. Moreover,
\[
\ZZ(w_0,z_0)=x_0+\tfrac{1}{5} x_1+2x_2+\parentesis{\tfrac{4}{25} \|x_1\|^2+4\|x_2\|^2}^{1/2}\St_{3}.
\]
\item \textbf{Case 2}: $x_3\neq 0$ and $x_2\neq0$.

In this case, \eqref{eq:super_2} and \eqref{eq:super_3} yield $\lambda\in(\frac{1}{2},1)$. Therefore,
\[
\ZZ(w_0,z_0)=\set{x_0+\tfrac{1}{1+4\lambda}x_1+\tfrac{1}{\lambda-\tfrac{1}{2}}x_2+\tfrac{1}{1-\lambda}x_3}.
\]

\item \textbf{Case 3}: $x_3\neq 0$ and $x_2=0$.

%
%
In this case, two different situations have to be considered. Indeed, \eqref{eq:super_2} implies that either $y_2=0$ or $\lambda=\frac{1}{2}$.
Also, \eqref{eq:super_3} says that
\[
\frac{4}{(1+4\la)^2}\|x_1\|^2 + \|y_2\|^2=\frac{1}{(1-\la)^2}\|x_3\|^2.
\]
Denoting $\gamma=\frac{\|x_1\|}{\|x_3\|}$, we can distinguish between two different cases: 
\begin{enumerate}[label=\roman*)]
\item if $\gamma> 3$ then $\la:=\frac{2\gamma -1}{2\gamma+4}$ is contained in the interval $(\tfrac{1}{2},1)$, hence $y_2=0$ and
\[
\ZZ(w_0,z_0)=\set{x_0+\tfrac{1}{1+4\la}x_1+\tfrac{1}{1-\la}x_3};
\]
\item if $\gamma\leq3$ then $\la=\frac{1}{2}$ and
\[
\ZZ(w_0,z_0)= x_0+\tfrac{1}{3}x_1 + 2\|x_3\|\big(1-\tfrac{\gamma^2}{9}\big)^{1/2}\St_{2}+ 2x_3.
\]
\end{enumerate}

\end{itemize}

\end{example}

\begin{obs}
The above example can be easily generalized, replacing the constants $4$ and $\frac{1}{2}$ appearing in the block matrix representations of $V^\#V$ and $T^\#T$ by arbitrary reals $\alpha$ and $\beta$ such that $\alpha>1$ and $0<\beta<1$, respectively. 

Given $\alpha$ and $\beta$ such that $\alpha>1$ and $0<\beta<1$, the parameter $\lambda$ varies between $\rho_-=\beta$ and $\rho_+=1$. Then, Case 3 splits into two according to $\tfrac{\|x_1\|}{\|x_3\|}\geq \gamma_{\alpha}$ or  $\tfrac{\|x_1\|}{\|x_3\|}< \gamma_{\alpha}$, where $\gamma_{\alpha}$ is a constant depending of $\alpha$. If $\tfrac{\|x_1\|}{\|x_3\|}\geq \gamma_{\alpha}$ then $\lambda\in (\beta,1)$, and if $\tfrac{\|x_1\|}{\|x_3\|}< \gamma_{\alpha}$ then $\lambda=\beta$.
\end{obs}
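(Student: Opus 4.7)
The plan is to mirror, step by step, the argument of Example \ref{ejemplo indef} after the substitutions $4\mapsto\alpha>1$ and $\tfrac12\mapsto\beta\in(0,1)$, tracking where the specific numerical values actually entered. First I would recompute $\rho_\pm$: since $T^\#T+\rho V^\#V$ acts on $\HH_1\oplus\HH_2\oplus\HH_3$ as the block diagonal operator $(1+\rho\alpha)I\oplus(\rho-\beta)I\oplus(1-\rho)I$, positive semidefiniteness is equivalent to $\rho\geq-1/\alpha$, $\rho\geq\beta$ and $\rho\leq 1$. Because $\beta>0>-1/\alpha$, the first inequality is redundant, so Corollary \ref{cor:intervalo} yields $\rho_-=\beta$ and $\rho_+=1$.

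Next I would re-parametrize the neutral set as
\[
\CV=\setB{y_1+y_2+(\alpha\|y_1\|^2+\|y_2\|^2)^{1/2}y_3\,:\,y_1\in\HH_1,\,y_2\in\HH_2,\,y_3\in\St_3}
\]
and apply Theorem \ref{teo:derivada}. Writing $-T^\#(Tx_0-w_0)=x_1+x_2+x_3$ and projecting the normal equation onto each summand, the existence of a minimizer is reduced to finding $\lambda\in[\beta,1]$ and $y_3\in\St_3$ with
\[
(1+\lambda\alpha)y_1=x_1,\quad(\lambda-\beta)y_2=x_2,\quad(1-\lambda)(\alpha\|y_1\|^2+\|y_2\|^2)^{1/2}y_3=x_3.
\]
Cases 1 and 2 of the example ($x_3=0$, and $x_2,x_3\neq 0$) carry over verbatim, forcing $\lambda=1$ and $\lambda\in(\beta,1)$, respectively, and producing the natural analogues of the original formulas.

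The substantive work is Case 3 ($x_3\neq 0$, $x_2=0$): the second equation forces either $y_2=0$ or $\lambda=\beta$; substituting $y_1=x_1/(1+\lambda\alpha)$ into the third equation and taking squared norms yields
\[
\frac{\alpha\|x_1\|^2}{(1+\lambda\alpha)^2}+\|y_2\|^2=\frac{\|x_3\|^2}{(1-\lambda)^2}.
\]
Setting $y_2=0$ and solving for $\lambda$ in terms of $\gamma:=\|x_1\|/\|x_3\|$ produces $\lambda=(\sqrt{\alpha}\,\gamma-1)/(\sqrt{\alpha}(\gamma+\sqrt{\alpha}))$, and an elementary check shows $\lambda\in(\beta,1)$ precisely when $\gamma>\gamma_\alpha:=(1+\alpha\beta)/(\sqrt{\alpha}(1-\beta))$. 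Alternatively, setting $\lambda=\beta$ forces $\|y_2\|^2=\|x_3\|^2/(1-\beta)^2-\alpha\|x_1\|^2/(1+\alpha\beta)^2$, which is nonnegative exactly when $\gamma\leq\gamma_\alpha$. Assembling the $y_i$'s in each subcase gives the explicit descriptions of $\ZZ(w_0,z_0)$ and the claimed dichotomy. Specializing $\alpha=4$, $\beta=\tfrac12$ recovers $\gamma_4=3$, confirming compatibility with the original example.

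The main point requiring care is the boundary value $\gamma=\gamma_\alpha$: there the two subcases collapse (both give $\lambda=\beta$ and $y_2=0$ simultaneously), so the partition of $(0,\infty)$ by $\gamma_\alpha$ is unambiguous and all values $\lambda\in[\beta,1)$ are realized monotonically as $\gamma$ varies in $[\gamma_\alpha,\infty)$. Apart from this consistency check and the algebra producing the closed form of $\gamma_\alpha$, the entire proof is a routine transcription of Example \ref{ejemplo indef}.
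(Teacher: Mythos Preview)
Your proposal is correct and is exactly the argument the paper has in mind: the remark is stated without proof, and the intended justification is precisely to rerun Example~\ref{ejemplo indef} with $4$ replaced by $\alpha$ and $\tfrac12$ by $\beta$, which you do accurately. Your explicit formula $\gamma_\alpha=(1+\alpha\beta)/(\sqrt{\alpha}(1-\beta))$ is correct (and in fact shows the threshold depends on $\beta$ as well, a point the remark glosses over); your treatment of the boundary $\gamma=\gamma_\alpha$ also matches the convention used in the example (where $\lambda=\beta$ at equality).
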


In the example above, Problem \ref{pb 1'} admits solution for every $(w_0,z_0)\in\KK\x\EE$, mainly due to the invertibility
of the operator $T^\#T + \rho V^\#V$ for $\rho\in (\rho_-,\rho_+)$. The next section presents necessary and sufficient conditions for 
the existence of solutions to Problem \ref{pb 1} for arbitrary initial data.

\section{Necessary and sufficient conditions for the existence of solutions for arbitrary initial data}\label{condiciones sufi}

The aim of this section is to characterize under which conditions Problem \ref{pb 1} admits a solution for every
$(w_0,z_0)\in\KK\x\EE$. To do so we suppose that $N(T)\cap N(V)=\set{0}$. Later on we express the results for the general case.
We assume the following:
\begin{hyp2}\label{hipo sec 4_2} 
$T^\#T$ and $V^\#V$ are indefinite operators on $\HH$, such that
\[
N(T)\cap N(V)=\set{0}.
\]
\end{hyp2}

\medskip

We first show some necessary conditions.
 
\begin{lema}\label{lema:condiciones_necesarias}
Assume that $\ZZ(w,z)\neq\varnothing$ for every $(w,z)\in\KK\x\EE$. Then:
\begin{enumerate}[label=\roman*)]
\item $\rho_-\neq\rho_+$;
\item $N(T^\#T)\cap N(V)=\set{0}$;
\item $\HH=N(T^\#T)^\bot+N(V)^\bot$.
\end{enumerate}
\end{lema}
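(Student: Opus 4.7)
The plan is to prove each of the three items by contradiction, using Theorem \ref{teo:derivada} as the main tool: the hypothesis that $\ZZ(w,z)\neq\varnothing$ for every $(w,z)\in\KK\x\EE$ translates, for each choice of data, into solvability of the normal equation $(T^\#T+\lambda V^\#V)\widetilde x=T^\#w+\lambda V^\#z$ together with the quadratic constraint $\K{V\widetilde x-z}{V\widetilde x-z}_\EE=0$ for some $\lambda\in[\rho_-,\rho_+]$. We shall test this against carefully chosen $(w,z)$.

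Item \emph{ii)} is the most direct. Suppose $y_0\in N(T^\#T)\cap N(V)$ is nonzero; Hypothesis \ref{hipo sec 4_2} forces $y_0\notin N(T)$, so $Ty_0\neq 0$, and by nondegeneracy of $\K{\cdot}{\cdot}_\KK$ there is $w_0\in\KK$ with $\K{Ty_0}{w_0}_\KK\neq 0$. Taking $z_0=0$, the hypothesis and Theorem \ref{teo:derivada} yield $\widetilde x$ and $\lambda$ with $(T^\#T+\lambda V^\#V)\widetilde x=T^\#w_0$. Since $Vy_0=0$ implies $V^\#Vy_0=0$, we have $(T^\#T+\lambda V^\#V)y_0=0$; pairing with $y_0$ in $\HH$ and using self-adjointness gives
\[
0=\PI{\widetilde x}{(T^\#T+\lambda V^\#V)y_0}=\PI{T^\#w_0}{y_0}=\overline{\K{Ty_0}{w_0}_\KK}\neq 0,
\]
a contradiction.

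For item \emph{i)}, assume $\rho_-=\rho_+=:\rho$. Theorem \ref{teo:derivada} then forces the Lagrange parameter to be exactly $\rho$, so $T^\#w_0+\rho V^\#z_0\in R(A)$ for every $(w_0,z_0)$, where $A:=T^\#T+\rho V^\#V\geq 0$. The case $\rho=0$ is ruled out because $T^\#T$ is indefinite by Hypothesis \ref{hipo sec 4_2}; varying the data then gives $R(T^\#)+R(V^\#)\subseteq R(A)$, and since $N(T)\cap N(V)=\set{0}$, this sum is dense in $\HH$, so $R(A)$ is dense and $N(A)=\set{0}$. The contradiction comes from the extremality of $\rho$: by definition of $\rho_\pm$ one produces normalized sequences $u_n\in\PV$, $v_n\in\PVV$ with $\K{Vu_n}{Vu_n}_\EE=1$, $\K{Vv_n}{Vv_n}_\EE=-1$ and $\PI{Au_n}{u_n},\PI{Av_n}{v_n}\to 0$, so that $A^{1/2}u_n,A^{1/2}v_n\to 0$. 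The main obstacle is extracting a nonzero vector in $N(A)$ from these sequences, for instance through combinations such as $y_n:=u_n+v_n$ (which is neutral up to the cross term $2\real\K{Vu_n}{Vv_n}_\EE$) and by passing to weak limits using the closed range of $T$ and surjectivity of $V$; such a vector contradicts $N(A)=\set{0}$.

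For item \emph{iii)}, by (ii) we already have $(N(T^\#T)^\bot+N(V)^\bot)^\bot=N(T^\#T)\cap N(V)=\set{0}$, so the sum is dense in $\HH$; what is missing is closedness. The plan is to leverage (i) and Proposition \ref{prop:uniformemente}: once $T$ is shown to be uniformly positive on $\CV$, that proposition supplies $\lambda$ for which $A_\lambda:=T^\#T+\lambda V^\#V$ is positive definite and hence invertible, so every $u\in\HH$ decomposes as $u=A_\lambda\widetilde x=T^\#T\widetilde x+\lambda V^\#V\widetilde x\in\overline{R(T^\#T)}+R(V^\#)=N(T^\#T)^\bot+N(V)^\bot$, giving (iii). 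Uniform positivity of $T$ on $\CV$ is obtained contrapositively from the hypothesis: a sequence $y_n\in\CV$ with $\|y_n\|=1$ and $\K{Ty_n}{Ty_n}_\KK\to 0$ would, via Proposition \ref{prop infimo}, be parlayed into initial data $(w_0,z_0)$ for which the objective is unbounded below, hence $\ZZ(w_0,z_0)=\varnothing$, contrary to hypothesis. Making this contrapositive construction explicit is the main technical obstacle for item iii).
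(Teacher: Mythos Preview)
Your argument for item \emph{ii)} is correct and in fact cleaner than the paper's: the paper goes through Proposition \ref{prop cond 1} and the identity $\CT\cap\CV=N(T^\#T)\cap N(V)$ (which itself requires $\rho_-\neq\rho_+$, so in the paper item \emph{ii)} depends on item \emph{i)}), whereas your direct pairing argument is self-contained and independent of \emph{i)}.

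For item \emph{i)}, however, there is a genuine gap. From $R(T^\#)+R(V^\#)\subseteq R(A)$ you correctly obtain that $R(A)$ is dense, hence $N(A)=\set{0}$. But the proposed contradiction---producing a nonzero vector in $N(A)$ from approximate minimising/maximising sequences $u_n,v_n$ with $A^{1/2}u_n,A^{1/2}v_n\to 0$---does not go through in infinite dimensions: having $0\in\sigma(A)$ is perfectly compatible with $N(A)=\set{0}$, and weak limits of $u_n$ or $u_n+v_n$ may well be zero (there is no compactness available). You yourself flag this as ``the main obstacle'', and indeed it is a real one. The paper bypasses this entirely by invoking an external result (\cite[Thm.~4.17]{GZMMP21}) asserting directly that $R(T^\#)\not\subseteq R(T^\#T+\rho V^\#V)$ when $\rho_-=\rho_+=\rho$; without something of that strength your sketch does not close.

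For item \emph{iii)}, your plan also has a gap. You propose to deduce closedness from uniform positivity of $T$ on $\CV$, appealing to Proposition \ref{prop:uniformemente} to get an invertible $A_\lambda$. But uniform positivity is exactly Proposition \ref{prop:vuelta}, whose proof in the paper \emph{uses} Lemma \ref{lema:condiciones_necesarias}; so you cannot simply quote it. Your alternative---deriving uniform positivity contrapositively via Proposition \ref{prop infimo} from a sequence $y_n\in\CV$ with $\|y_n\|=1$ and $\K{Ty_n}{Ty_n}_\KK\to 0$---is not carried out, and it is unclear how to manufacture data $(w_0,z_0)$ for which no constant $c$ in \eqref{ecu desig K} exists from such a sequence (one would need $|\K{Tx_0-w_0}{Ty_n}|$ bounded away from zero, which again runs into the lack of compactness). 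The paper's proof of \emph{iii)} takes a completely different route: it shows directly that $N(T^\#T)+N(V)$ is closed by taking $x_n+u_n\to x_0$ with $x_n\in N(T^\#T)$, $u_n\in N(V)$, applying the normal equation at $(0,Vx_0)$ to obtain $y_0\in\CV$ and $\lambda$, and then proving $y_0\in N(V)$ by a careful computation showing $(T^\#T+\rho V^\#V)^{1/2}(u_n+y_0)\to 0$ for \emph{every} $\rho\in(\rho_-,\rho_+)$ distinct from $\lambda$, which forces $V^\#Vy_0=0$.
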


\begin{proof}
\noi {\it i)} Assume that $\rho_-=\rho_+=\rho$. Then, by Theorem \ref{teo:derivada}, for any $(w,z)\in\KK\x\EE$ there exists $\widetilde{x}\in\HH$ such that
\[
(T^\#T+\rho V^\#V)\widetilde{x}=T^\#w+\rho V^\#z.
\]
Since $(w,z)$ is arbitrary, $R(T^\#)\subseteq R(T^\#T+\rho V^\#V)$. But this is a contradiction to \cite[Thm. 4.17]{GZMMP21}.
Therefore, $\rho_-\neq\rho_+$.


\smallskip

\noi {\it ii)} By Lemma \ref{lema:nucleo_abierto}, $\rho_-\neq\rho_+$ implies that
$\CT\cap\CV=N(T^\#T)\cap N(V)$. Given $x_0\in\HH$ and $w_0\in\KK$, by Proposition \ref{prop cond 1} 
there exists $y_0\in\CV$ such that
$$|\K{Tx_0-w_0}{Ty}|^2\leq\K{Ty_0}{Ty_0}\K{Ty}{Ty},\quad\text{for all $y\in\CV$.}$$
Hence, $Tx_0-w_0\in T(\CT\cap\CV)^\ort$. Since $x_0$ and $w_0$ are arbitrary,
$$T\big(N(T^\#T)\cap N(V)\big)^\ort=T(\CT\cap\CV)^\ort=\KK,$$
and thus $N(T^\#T)\cap N(V)\subseteq N(T)$. Consequently, $N(T^\#T)\cap N(V)=N(T)\cap N(V)=\set{0}$.

\smallskip

\noi {\it iii)} By item {\it ii}, we only need to show that $N(T^\#T)+ N(V)$ is closed. 
Assume that $(x_n)_{n\in\NN}$ is a sequence in $N(T^\#T)$ and $(u_n)_{n\in\NN}$ is a sequence in $N(V)$ such that
$x_n + u_n\ra x_0\in\overline{N(T^\#T)+N(V)}$. Since $\ZZ(0,Vx_0)\neq\varnothing$, by Theorem \ref{teo:derivada} there exist $\la\in [\rho_-,\rho_+]$ and $y_0\in\CV$ such that
\begin{equation}\label{eq:una_eq_normal}
(T^\#T + \la V^\#V)y_0=-T^\#Tx_0.
\end{equation}

In what follows we prove that in this case, $y_0\in N(V)$; then, by \eqref{eq:una_eq_normal} $x_0+y_0\in N(T^\#T)$, or equivalently, $x_0\in N(T^\#T)+N(V)$.

On the one hand, since $x_n\in N(T^\#T)$,
\begin{equation}\label{eq:coso}
T^\#Tu_n=T^\#T(u_n+x_n)\to T^\#Tx_0=-(T^\#T+\lambda V^\#V)y_0.
\end{equation}
On the other hand, since $u_n\in N(V)$ and $y_0\in\CV$, $\PI{V^\#V(u_n+y_0)}{u_n+y_0}=0$ for every $n\in\NN$.
Then for any $\rho\in(\rho_-,\rho_+)$, $\rho\neq\lambda$, it holds that
\[
(T^\#T+\rho V^\#V)^{1/2}(u_n+y_0)\to0.
\]

In fact,
\begin{align*}
\|(T^\#T+\rho V^\#V)^{1/2}(u_n+y_0)\|^2&=\PI{T^\#T(u_n+y_0)}{u_n+y_0}\\
&=\PI{T^\#T(u_n+y_0)}{u_n}+\PI{T^\#T(u_n+y_0)}{y_0}.
\end{align*}
Also, since $y_0\in\CV$,
\[
\PI{T^\#T(u_n+y_0)}{y_0}=\PI{T^\#Tu_n+(T^\#T+\lambda V^\#V)y_0}{y_0}\to 0,
\] 
and
\begin{align*}
\PI{T^\#T(u_n+y_0)}{u_n} &=\PI{T^\#T(x_n+u_n+y_0)}{x_n+u_n} \\ &\to\PI{T^\#T(x_0+y_0)}{x_0}=0,
\end{align*}
because $T^\#T(x_0+y_0)=-\lambda V^\#Vy_0\in N(T^\#T)^\bot\cap N(V)^\bot$, see \eqref{eq:una_eq_normal}.

Therefore $(T^\#T+\rho V^\#V)(u_n+y_0)\to 0$, or equivalently,
$T^\#Tu_n\to -(T^\#T+\rho V^\#V)y_0$, for every $\rho\in(\rho_-,\rho_+)$ 
But, by \eqref{eq:coso},  $T^\#Tu_n \ra -(T^\#T + \la V^\#V)y_0$. Hence, $(T^\#T + \rho V^\#V)y_0=(T^\#T + \la V^\#V)y_0$.
So that $V^\#Vy_0=0$, and thus $y_0\in N(V)$ and
\[
T^\#Ty_0=(T^\#T + \la V^\#V)y_0=-T^\#Tx_0.
\]
Then $x_0+y_0\in N(T^\#T)$, or $x_0\in N(T^\#T)+N(V)$ as claimed.
\end{proof}

Lemma \ref{lema:condiciones_necesarias} allows us to prove the following necessary condition
for Problem \ref{pb 1} admitting a solution for every initial data point.

\begin{prop}\label{prop:vuelta} Assume that $\ZZ(w,z)\neq\varnothing$ for every $(w,z)\in\KK\x\EE$. Then
there exists $\alpha>0$ such that $\K{Ty}{Ty}\geq\alpha\|y\|^2$
for every $y\in\CV$
\end{prop}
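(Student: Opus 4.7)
The plan is to argue by contradiction, combining the hypothesis (via Banach--Steinhaus) with the structural results from Lemma \ref{lema:condiciones_necesarias}. Assume for contradiction the conclusion fails: there exists $(y_n)\subset\CV$ with $\|y_n\|=1$ and $\K{Ty_n}{Ty_n}\to 0$. The degenerate subcase $\K{Ty_n}{Ty_n}=0$ is ruled out because it would force $y_n\in\CT\cap\CV=N(T^\#T)\cap N(V)=\set{0}$ by Lemma \ref{lema:nucleo_abierto} (using $\rho_-<\rho_+$ from Lemma \ref{lema:condiciones_necesarias}(i)) together with Lemma \ref{lema:condiciones_necesarias}(ii), contradicting $\|y_n\|=1$; so WLOG $\K{Ty_n}{Ty_n}>0$ for all $n$.

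First I would extract the strong convergence $T^\#Ty_n\to 0$ by applying the hypothesis to data of the form $(-Tu,0)$. For each $u\in\HH$, $\ZZ(-Tu,0)\neq\varnothing$ together with Proposition \ref{prop infimo} (with $x_0=0$) furnishes $c_u\geq 0$ such that $|\K{Tu}{Ty}|^2\leq c_u\K{Ty}{Ty}$ for every $y\in\CV$; the functionals $\phi_n(u):=\K{Tu}{Ty_n}/\sqrt{\K{Ty_n}{Ty_n}}$ on $\HH$ are then pointwise bounded, and Banach--Steinhaus produces $C>0$ with $\|T^\#Ty_n\|\leq C\sqrt{\K{Ty_n}{Ty_n}}\to 0$. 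Fixing $\rho\in(\rho_-,\rho_+)\setminus\set{0}$ (nonempty since $\rho_-<\rho_+$), the semidefiniteness $T^\#T+\rho V^\#V\geq 0$ from Corollary \ref{cor:intervalo} combined with $\PI{(T^\#T+\rho V^\#V)y_n}{y_n}=\K{Ty_n}{Ty_n}\to 0$ yields $(T^\#T+\rho V^\#V)y_n\to 0$ strongly, whence $V^\#Vy_n\to 0$.

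Next, since $V$ is surjective, $R(V^\#V)=N(V)^\bot$ is closed and $V^\#V$ is bounded below there. Decomposing $y_n=r_n+s_n$ along $N(V)\oplus N(V)^\bot$ gives $\|s_n\|\to 0$, hence $\|r_n\|\to 1$ and $T^\#Tr_n\to 0$ strongly. By Lemma \ref{lema:condiciones_necesarias}(iii), $N(T^\#T)^\bot+N(V)^\bot=\HH$ is a closed sum; the standard Hilbert-space duality ($M+N$ closed $\iff$ $M^\bot+N^\bot$ closed) then gives that $N(T^\#T)+N(V)$ is closed, and together with Lemma \ref{lema:condiciones_necesarias}(ii) this forces the Friedrichs angle between $N(T^\#T)$ and $N(V)$ to be strictly positive: there is $\kappa>0$ with $\|p-r\|\geq\kappa\|r\|$ for all $p\in N(T^\#T)$ and $r\in N(V)$.

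The main obstacle is to convert the strong convergence $T^\#Tr_n\to 0$ into a sequence $p_n\in N(T^\#T)$ with $\|p_n-r_n\|\to 0$, which would instantly give the contradiction $\kappa\|r_n\|\leq\|p_n-r_n\|\to 0$ against $\|r_n\|\to 1$. The subtlety is that $T^\#T$ need not have closed range even when $T$ does (the Krein-space range of $T$ may contain a nontrivial isotropic part, so that $R(T^\#T)$ is only dense in $N(T^\#T)^\bot$), so one cannot decompose $r_n$ via a direct lower bound on $T^\#T|_{N(T^\#T)^\bot}$; instead, one must handle this via the spectral decomposition of the selfadjoint operator $T^\#T$, approximating $r_n$ through the spectral projections onto $[-\epsilon,\epsilon]$ and using a diagonal argument (exploiting $\|r_n\|\to 1$ to constrain how spectral mass of $r_n$ can spread near $0$) to produce the desired $p_n\in N(T^\#T)$.
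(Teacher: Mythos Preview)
Your argument is solid and rather elegant up to the point where you obtain $r_n\in N(V)$ with $\|r_n\|\to 1$ and $T^\#Tr_n\to 0$; the Banach--Steinhaus step in particular is a nice idea. However, the final step has a genuine gap. The implication ``$T^\#Tr_n\to 0$, $\|r_n\|\to 1$ $\Rightarrow$ there exist $p_n\in N(T^\#T)$ with $\|p_n-r_n\|\to 0$'' is \emph{equivalent} to $T^\#T$ being bounded below on $N(T^\#T)^\bot$ along the relevant directions, i.e.\ exactly the closed-range issue you flag. Your proposed fix via spectral projections does not work: for $\epsilon>0$ the projection $E_{(-\epsilon,\epsilon)}$ of $T^\#T$ lands in $R\big(E_{(-\epsilon,\epsilon)}\big)$, not in $N(T^\#T)=R\big(E_{\{0\}}\big)$, and a diagonal choice $\epsilon_n\downarrow 0$ still only yields $p_n\in R\big(E_{(-\epsilon_n,\epsilon_n)}\big)$. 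If $0$ lies in the continuous spectrum of $T^\#T$ (so $N(T^\#T)=\{0\}$ but $0\in\sigma(T^\#T)$), one can easily have $\|r_n\|=1$, $T^\#Tr_n\to 0$, and yet the only choice $p_n=0$ gives $\|p_n-r_n\|=1$. Nothing you have derived so far rules this out: Lemma~\ref{lema:condiciones_necesarias} controls the angle between $N(T^\#T)$ and $N(V)$, not between the spectral subspaces $R\big(E_{(-\epsilon,\epsilon)}\big)$ and $N(V)$, and ``$\|r_n\|\to 1$'' places no constraint whatsoever on the spectral distribution of $r_n$ near $0$.

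The paper sidesteps this obstruction entirely by taking a different route: instead of arguing by contradiction, it shows directly that $T^\#T+\rho' V^\#V$ is surjective (hence positive definite) for any $\rho'\in(\rho_-,\rho_+)$, and then invokes Proposition~\ref{prop:uniformemente}. The surjectivity is obtained by combining the normal equation from Theorem~\ref{teo:derivada} (which, for each $w_0\in\KK$, produces some $\lambda\in[\rho_-,\rho_+]$ and $y_0\in\CV$ with $(T^\#T+\lambda V^\#V)y_0=T^\#w_0$) with the range comparison of Proposition~\ref{prop:rango_raices}, yielding $R(T^\#)\subseteq R\big((T^\#T+\rho' V^\#V)^{1/2}\big)$; a short manipulation then gives $R\big((T^\#T+\rho' V^\#V)^{1/2}\big)=\HH$. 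The key ingredient you are missing is precisely this use of the attained minimum (through the normal equation), rather than merely the boundedness of the infimum extracted via Proposition~\ref{prop infimo}.
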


\begin{proof}
By Lemma \ref{lema:condiciones_necesarias} $\rho_-\neq\rho_+$, $N(T^\#T)\cap N(V)=\set{0}$ and
$\HH=N(T^\#T)^\bot+N(V)^\bot\subseteq N(T)^\bot+N(V)^\bot$.
Given $\rho'\in(\rho_-,\rho_+)$, we claim that $N(T)^\bot\subseteq R(T^\#T+\rho' V^\#V)^{1/2}$. In fact, let
$x_0\in N(T)^\bot=R(T^\#)$, and let $w_0\in\KK$ be such that $T^\#w_0=x_0$. Since $\ZZ(w_0,0)\neq\varnothing$, by Theorem \ref{teo:derivada} there
exist $\la\in [\rho_-,\rho_+]$ and $y_0\in\CV$ such that
\begin{equation*}
(T^\#T + \la V^\#V)y_0=T^\#w_0.
\end{equation*}
By Proposition \ref{prop:rango_raices},
\[
x_0=T^\#w_0=(T^\#T+\lambda V^\#V)y_0\subseteq R\big((T^\#T+\rho' V^\#V)^{1/2}\big).
\]
Since $x_0$ is arbitrary, we have that $N(T)^\bot\subseteq R\big((T^\#T+\rho' V^\#V)^{1/2}\big)$. Using this fact, it holds that
\[
N(V)^\bot=R(V^\#V)\subseteq R(T^\#)+R(T^\#T+\rho' V^\#V)\subseteq R\big((T^\#T+\rho' V^\#V)^{1/2}\big),
\]
which implies
\[
\HH=N(T)^\bot+N(V)^\bot=R\big((T^\#T+\rho' V^\#V)^{1/2}\big),
\]
and thus $\HH=R(T^\#T+\rho' V^\#V)$, see \cite{Douglas}. Hence, $T^\#T+\rho' V^\#V$ is a positive definite operator,
or equivalently, by Proposition \ref{prop:uniformemente}, there exists $\alpha>0$ such that $\K{Ty}{Ty}\geq\alpha\|y\|^2$
for every $y\in\CV$.
\end{proof}

Next we establish the conditions that guarantee the existence of solutions for arbitrary initial data.
For the rest of this section we assume the following hypothesis:

\begin{hyp2}\label{hipo sec 4} 
Given $T\in\mc{L}(\HH,\KK)$ and $V\in \mc{L}(\HH,\EE)$ such that $T^\#T$ and $V^\#V$ are indefinite operators on $\HH$, assume
that there exists $\alpha>0$ such that
\[
\K{Ty}{Ty}\geq\alpha\|y\|^2,\quad\quad\text{for every $y\in\CV$.}
\]
\end{hyp2}

This implies that $\CT\cap\CV=\set{0}$ (which in turn implies the condition in Hypothesis \ref{hipo sec 4_2}).
Later on, we modify Hypothesis \ref{hipo sec 4} appropriately to express the results for the general case.

\medskip

Hypothesis \ref{hipo sec 4} is not sufficient to ensure the existence of solutions
for every initial data point, as the next example shows.

\begin{example}
Assume that $\HH=\KK=\EE=\ell_2(\NN)$, and consider the indefinite inner products
$$\K{x}{y}_\KK=x_1\overline{y}_1-\sum_{k\geq2}x_k\overline{y}_k,\quad x=(x_k)_{k\in\NN},\,y=(y_k)_{k\in\NN}\in  \ell_2(\NN),$$
$$\K{x}{y}_\EE=-x_1\overline{y}_1+\sum_{k\geq2}x_k\overline{y}_k,\quad x=(x_k)_{k\in\NN},\,y=(y_k)_{k\in\NN}\in  \ell_2(\NN).$$
Then, $(\KK,\K{\cdot}{\cdot}_\KK)$ and $(\EE,\K{\cdot}{\cdot}_\EE)$ are Krein spaces.

If $\{e_k\}_{k\in\NN}$ denotes the standard canonical basis of $ \ell_2(\NN)$, and $\alpha>\beta>0$, consider the linear operators
$T:\HH\ra\KK$ and $V:\HH\ra\EE$ given by
\begin{alignat*}{3}
& Te_1=\alpha e_1, \quad && Te_k=e_{k}\,\text{ if }k\geq2,\\
& Ve_1=\beta e_1, \quad &&Ve_k=(1+\tfrac{1}{k})^{1/2}e_{k}\,\text{ if }k\geq2.
\end{alignat*}
Both $T$ and $V$ are trivially surjective, $N(T)=N(V)=\set{0}$, and a few calculations show that, for
$x=(x_k)_{k\in\NN}=\sum_{k\geq1}x_ke_k\in  \ell_2(\NN)$,
\begin{align*}
T^\#Tx&=\alpha^2x_1e_1-\sum_{k\geq2}x_ke_k,\\
V^\#Vx&=-\beta^2x_1e_1+\sum_{k\geq2}x_k(1+\tfrac{1}{k})e_k.
\end{align*}
Hence, $T^\#T$ and $V^\#V$ are two indefinite operators acting on $\HH$. Moreover,
$T^\#T+\rho V^\#V$ is positive semidefinite if and only if $1\leq\rho\leq\tfrac{\alpha^2}{\beta^2}$, i.e.
$\rho_-=1$ and $\rho_+=\tfrac{\alpha^2}{\beta^2}$. Also,
\begin{displaymath}
\gamma(T^\#T+\rho V^\#V)=\left\{\begin{array}{ll}
\rho-1						&,\quad 1\leq\rho<\tfrac{\alpha^2+1}{\beta^2+1},\\
\alpha^2-\rho\beta^2	&,\quad \tfrac{\alpha^2+1}{\beta^2+1}\leq\rho<\tfrac{\alpha^2}{\beta^2},\\
\tfrac{\alpha^2}{\beta^2}-1						&,\quad \rho=\tfrac{\alpha^2}{\beta^2}.
\end{array}\right.
\end{displaymath}
Then, $R(T^\#T+\rho V^\#V)$ is closed for every $\rho\in(1,\tfrac{\alpha^2}{\beta^2}]$. Given $\rho\in(1,\tfrac{\alpha^2}{\beta^2}]$,
\[
N(T^\#T+\rho V^\#V)=\set{0}=N(T)\cap N(V),
\]
and $T^\#T+\rho V^\#V$ is a positive definite operator, or equivalently
there exists $\gamma>0$ such that $\K{Ty}{Ty}\geq\gamma\|y\|^2$ for every
$y\in\CV$.

However, Problem \ref{pb 1} does not admit solutions for every
$(w_0,z_0)\in\KK\x\EE$. In fact, consider the vector $(0,Ve_1)=(0,\beta e_1)\in\KK\x\EE$. 
By Theorem \ref{teo:derivada}, Problem \ref{pb 1} admits a solution for $(0,\beta e_1)$ if and only if there exist
$\lambda\in[1,\tfrac{\alpha^2}{\beta^2}]$ and $y\in\CV$ such that
\[
(T^\#T+\lambda V^\#V)(e_1+y)=\lambda V^\#Ve_1,
\]
or equivalently,
\begin{equation}\label{eq:normal_ejemplo}
(T^\#T+\lambda V^\#V)y=-T^\#Te_1.
\end{equation}
On the one hand, note that $y=(y_k)_{k\in\NN}\in\CV$ if and only if
\[
\sum_{k\geq2}(1+\tfrac{1}{k})|y_k|^2=\beta^2|y_1|^2.
\]
On the other hand, \eqref{eq:normal_ejemplo} is equivalent to
\begin{align*}
(\alpha^2-\lambda\beta^2)y_1&=-\alpha^2,\\
\big[\lambda(1+\tfrac{1}{k})-1\big]y_k&=0,\quad\text{for $k\geq2$.}
\end{align*}
In this case there is no $y\in\CV$ satisfying \eqref{eq:normal_ejemplo}
because the above equations imply
\[
0=\sum_{k\geq2}(1+\tfrac{1}{k})|y_k|^2=\beta^2|y_1|^2,
\]
and thus $0=-\alpha^2$, leading to a contradiction.
\end{example}

\medskip

Hypothesis \ref{hipo sec 4} allows us to study a simpler equivalent problem, because in this
case the operator pencil $P(\lambda)=T^\#T+\lambda V^\#V$ is regular: by Proposition \ref{prop:uniformemente}
and \cite[Cor. 4.14]{GZMMP21}, $T^\#T+\rho V^\#V$ is positive definite for every $\rho\in(\rho_-,\rho_+)$.
Let us fix $\rho=\frac{\rho_-+\rho_+}{2}$ for convenience, and define the following indefinite inner product on $\KK\x \EE$:
\begin{equation}\label{met indef para KxE}
\K{(w,z)}{(w',z')}_{\rho}=\K{w}{w'}_\KK + \rho\K{z}{z'}_\EE,\quad\quad \textrm{$w,w'\in \KK$ and $z,z'\in\EE$}.
\end{equation}
It is easy to see that $(\KK\x\EE,\K{\cdot}{\cdot}_\rho)$ is a Krein space.
Define the operator $L: \HH \ra \KK\x\EE$ by
\begin{equation*}
Lx=(Tx,Vx), \ \ \ \ x\in \HH.
\end{equation*}
The adjoint operator of $L$ with respect to the indefinite inner product $\K{\cdot}{\cdot}_\rho$ in $\KK\x\EE$ is given by
\[
L^\#(w,z)=T^\#w + \rho V^\#z, \quad (y,z)\in\KK\x\EE,
\]
and it is immediate that $L^\#L=T^\#T+ \rho V^\#V$.
Now consider the selfadjoint operator $G\in\mc{L}(\HH)$ given by
 \begin{equation}\label{eq:def_G}
 G:= (L^\#L)^{-1/2}V^\#V(L^\#L)^{-1/2}.
\end{equation}
Then $P(\lambda)$ can be rewritten as
\[
T^\#T+\lambda V^\#V=(L^\#L)^{1/2}\big(I+(\lambda-\rho)G\big)\,(L^\#L)^{1/2}.
\]
Hence, the operator pencil $T^\#T+\lambda V^\#V$ is congruent to the pencil $I+\gamma G$, where $\gamma=\lambda-\rho$.
If $\kappa=\frac{\rho_+-\rho_-}{2}$, then
$I+\gamma G$ is positive semidefinite if and only if $\gamma\in[-\kappa,\kappa]$, and positive definite if and only if
$\gamma\in(-\kappa,\kappa)$, see \cite[Prop. 3.11]{GZMMP21}.
This reduction technique is very common in the operator pencils context, since the auxiliary pencil $P'(\gamma)=I+\gamma G$ is easier to
analyze, see e.g. \cite{Gohberg}. A similar procedure is also applied in \cite{Hmam} for a constrained quadratic optimization problem
in a finite dimensional setting.

Consider the neutral elements of the quadratic form $x\mapsto\PI{Gx}{x}$, i.e.
\[
Q(G):=\set{x\in\HH\,:\,\PI{Gx}{x}=0}.
\]
Next, we determine sufficient conditions under which there exist $y\in Q(G) $ and
$\gamma\in[-\kappa,\kappa]$ such that
\begin{equation}\label{eq:nueva_eq_normal}
(I+\gamma G)y=u_0,
\end{equation}
for every vector $u_0\in\HH$.
Later on, we show that this implies that Problem \ref{pb 1} admits a solution for every initial data point.
Solving \eqref{eq:nueva_eq_normal} is equivalent
to finding the vectors in $ Q(G) $ which minimize the distance to the vector $u_0\in\HH$:
\[
\min\|y-u_0\|^2 \qquad \text{subject to} \qquad y\in  Q(G) .
\]
In fact, the normal equation \eqref{eq:nueva_eq_normal} is just the corresponding version of \eqref{eq:eqnormal_2} for this minimal
distance problem.
\medskip

Consider the canonical decomposition of $G$ as the difference of two positive operators: there exist unique subspaces
$\HH_\pm\subseteq\HH$ and positive definite operators $G_\pm\in\mc{L}(\HH_\pm)$ such that 
\begin{equation}\label{eq:descomposicion}
\HH=\HH_+\oplus\HH_-\oplus N(G),
\end{equation}
and $G=\left(\begin{smallmatrix}
G_+ & 0 & 0 \\
0 & -G_- & 0 \\
0 & 0 & 0
\end{smallmatrix}\right)$ with respect to \eqref{eq:descomposicion}.

If $u_0=u_0^++u_0^-+u_0^0$ with $u_0^\pm\in\HH_\pm$
and $u_0^0\in N(G)$, \eqref{eq:nueva_eq_normal} translates into
\begin{align}\label{ecu desag}
\left\{
\begin{array}{rcl}
(I_++\gamma G_+)y^+ &=& u_0^+ \\
(I_--\gamma G_-)y^- &=& u_0^- \\
y^0 &=& u_0^0
\end{array},
\right.
\end{align}
where $y=y^++y^-+y^0$ with $y^\pm\in\HH_\pm$ and $y^0\in N(G)$. 

\bigskip

Consider the subspaces
\begin{equation}\label{eq:def_nucleos}
\mc{N}_\pm:=N(I\mp\kappa G).
\end{equation}
It is easy to check that $\N_\pm=N(I_\pm-\kappa G_\pm)$.
Since $\N_\pm$ is invariant for $G_\pm\in\mc{L}(\HH_\pm)$, its orthogonal complement in $\HH_\pm$,
\[
\mc{D_\pm}:=\HH_\pm\ominus \N_\pm,
\]
is also an invariant subspace for $G_\pm$. We call $\mc{D_\pm}$ the \emph{positive (negative) defect subspace} of $\N_\pm$. 

\begin{lema}\label{lema:primera_serie}
Given $u\in\HH_\pm$, decompose it as $u=v+w$ with $v\in\mc{N}_\pm$ and $w\in\mc{D}_\pm$.
Then, for every $\tau\in(-\kappa,\kappa)$,
\begin{align*}
\|(I_\pm\pm \tau G_\pm)^{-1}u\|^2 
=\frac{\kappa^2}{(\kappa\pm \tau)^2}\|v\|^2+\|(I_\pm\pm \tau G_\pm)^{-1}w\|^2.
\end{align*}
\end{lema}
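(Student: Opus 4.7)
The plan is to exploit two structural facts: first, that $\N_\pm$ is an eigenspace of $G_\pm$, and second, that $\mc{D}_\pm$ is invariant under $G_\pm$ because $G_\pm$ is selfadjoint and $\N_\pm$ is invariant. Together these will split the computation of $\|(I_\pm\pm\tau G_\pm)^{-1}u\|^2$ along the orthogonal decomposition $\HH_\pm=\N_\pm\oplus\mc{D}_\pm$.

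First I would identify what $\N_\pm$ looks like in terms of $G_\pm$. Using the block representation of $G$ with respect to $\HH=\HH_+\oplus\HH_-\oplus N(G)$, the operator $I\mp\kappa G$ restricts to $I_\pm\mp\kappa(\pm G_\pm)=I_\pm-\kappa G_\pm$ on $\HH_\pm$, while on the complementary summand it acts as $I_\mp+\kappa G_\mp$ (which is positive definite since $G_\mp\geq 0$ and $\kappa\geq 0$) and as the identity on $N(G)$. Hence $\N_\pm\subseteq\HH_\pm$ and $\N_\pm=N(I_\pm-\kappa G_\pm)$, so every $v\in\N_\pm$ satisfies $G_\pm v=\tfrac{1}{\kappa}v$.

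Next I would compute the action of $(I_\pm\pm\tau G_\pm)^{-1}$ on $v\in\N_\pm$: since $G_\pm v=\tfrac{1}{\kappa}v$, one has $(I_\pm\pm\tau G_\pm)v=\tfrac{\kappa\pm\tau}{\kappa}v$, and as $\tau\in(-\kappa,\kappa)$ this factor is nonzero, giving
\[
(I_\pm\pm\tau G_\pm)^{-1}v=\frac{\kappa}{\kappa\pm\tau}\,v.
\]
In particular, $(I_\pm\pm\tau G_\pm)^{-1}$ leaves $\N_\pm$ invariant. Since $G_\pm$ is selfadjoint and $\N_\pm$ is $G_\pm$-invariant, its orthogonal complement $\mc{D}_\pm$ inside $\HH_\pm$ is also $G_\pm$-invariant, hence invariant under $(I_\pm\pm\tau G_\pm)$ and under its bounded inverse.

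Finally, writing $u=v+w$ with $v\in\N_\pm$ and $w\in\mc{D}_\pm$, I would apply $(I_\pm\pm\tau G_\pm)^{-1}$ termwise, obtaining the $\N_\pm$-component $\tfrac{\kappa}{\kappa\pm\tau}v$ and a $\mc{D}_\pm$-component $(I_\pm\pm\tau G_\pm)^{-1}w$; these two vectors are orthogonal, so Pythagoras gives the stated identity. The only subtle point is the first step — checking $\N_\pm\subseteq\HH_\pm$ so that the restriction $G_\pm$ really does act as $\tfrac{1}{\kappa}I$ on $\N_\pm$ — but this follows directly from the positivity of $I_\mp+\kappa G_\mp$ on $\HH_\mp$; once that is in place the rest is a one-line eigenvalue computation and Pythagoras.
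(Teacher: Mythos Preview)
Your proposal is correct and follows essentially the same approach as the paper: both identify $\N_\pm=N(I_\pm-\kappa G_\pm)$ as the eigenspace of $G_\pm$ for the eigenvalue $\tfrac{1}{\kappa}$, compute $(I_\pm\pm\tau G_\pm)^{-1}v=\tfrac{\kappa}{\kappa\pm\tau}v$ for $v\in\N_\pm$, use invariance of $\mc{D}_\pm$, and conclude by Pythagoras. Your verification that $\N_\pm\subseteq\HH_\pm$ is actually already stated by the paper just before the lemma, so it is not needed inside the proof, but including it does no harm.
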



\begin{proof}
Given $\tau\in(-\kappa,\kappa)$, considering that $\|G_\pm\|=\tfrac{1}{\kappa}$ we have that
$I_+ + \tau G_+$ is invertible, see \cite[Prop. 3.11]{GZMMP21}. $\N_+$ and $\mc{D}_+$ are both invariant
subspaces for $I_+ + \tau G_+$. Also, since $\mc{N}_+=N(I_+-\kappa G_+)$, if $v\in \N_+$ then 
\[
(I_+ + \tau G_+)v= (I_+ -\kappa G_+)v + (\kappa + \tau)G_+ v = \frac{ \kappa + \tau}{\kappa} v.
\] 

Now, let $u=v+w\in\HH_+$ with $v\in\mc{N}_+$ and $w\in\mc{D}_+$. Then, 
\[
(I_+ + \tau G_+)^{-1}u= (I_+ + \tau G_+)^{-1}v + (I_+ + \tau G_+)^{-1}w= \frac{\kappa}{\kappa + \tau} v + (I_+ + \tau G_+)^{-1}w,
\]
and it is immediate that
\begin{align*}
\|(I_++\tau G_+)^{-1}u\|^2 =\frac{\kappa^2}{(\kappa+\tau)^2}\|v\|^2+\|(I_++\tau G_+)^{-1}w\|^2. 
\end{align*}
The proof of the remaining norm equality is similar.
\end{proof}

As a consequence of Lemma \ref{lema:primera_serie}, if $u=v+w\in\HH_\pm$ with $v\in\mc{N}_\pm$ and $w\in\mc{D}_\pm$ is such that
$v\neq0$, then $\lim_{\tau\to\mp\kappa}\|(I_\pm\pm\tau G_\pm)^{-1}u\|=+\infty$.

\begin{lema}\label{lema:en_el_rango} 
Given $u\in\HH_\pm$, if $\lim_{\tau\to\kappa}\|(I_\pm-\tau G_\pm)^{-1}u\|<+\infty$,
then $u\in R(I_\pm-\kappa G_\pm)$.
\end{lema}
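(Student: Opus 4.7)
The plan is to prove the lemma by a Banach--Alaoglu / weak compactness argument. Work in the $+$ case; the $-$ case is identical. Set $A_\tau := I_+-\tau G_+$ and note that $\|G_+\|=\tfrac{1}{\kappa}$ (cf.\ \cite[Prop. 3.11]{GZMMP21}) makes $A_\tau$ boundedly invertible for every $\tau\in(-\kappa,\kappa)$. Introduce the curve
\[
v_\tau:=A_\tau^{-1}u\in\HH_+,\qquad \tau\in(-\kappa,\kappa),
\]
so that $A_\tau v_\tau=u$. The hypothesis $\lim_{\tau\to\kappa}\|v_\tau\|<+\infty$ means exactly that $\{v_\tau\}$ stays norm-bounded as $\tau\nearrow\kappa$.

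Next I would invoke weak compactness. Since $\HH_+$ is a Hilbert space, bounded sets are relatively weakly compact, so I can extract a sequence $\tau_n\nearrow\kappa$ and a vector $v_0\in\HH_+$ with $v_{\tau_n}\rightharpoonup v_0$. The key algebraic identity is the splitting
\[
u \;=\; A_{\tau_n} v_{\tau_n} \;=\; A_\kappa v_{\tau_n} \;+\; (\kappa-\tau_n)\,G_+ v_{\tau_n},
\]
which I would pass to the weak limit term by term. The last summand already tends to $0$ \emph{in norm} because
\[
\|(\kappa-\tau_n)G_+v_{\tau_n}\| \;\leq\; \tfrac{\kappa-\tau_n}{\kappa}\sup_n\|v_{\tau_n}\|\;\longrightarrow\;0.
\]
For the first summand, $A_\kappa$ is bounded and therefore continuous from $(\HH_+,\text{weak})$ to $(\HH_+,\text{weak})$, so $A_\kappa v_{\tau_n}\rightharpoonup A_\kappa v_0$. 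The left-hand side is constantly $u$, so the uniqueness of weak limits gives
\[
u \;=\; A_\kappa v_0 \;\in\; R(I_+-\kappa G_+),
\]
which is the desired conclusion.

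I do not anticipate a serious obstacle: the argument is standard once one thinks of ``bounded family + reflexive space $\Rightarrow$ weak subsequential limit''. Two minor points to double-check are (i) that $A_\kappa$ is indeed well-defined and bounded (immediate, since $A_\kappa = I_+-\kappa G_+$ is a difference of bounded selfadjoint operators), and (ii) that the norm-convergence of the perturbation term does not require any information about $v_0$, only boundedness of $\{v_{\tau_n}\}$. Note that Lemma \ref{lema:primera_serie} is consistent with, but not required for, this proof: it forces the component of $u$ in $\mc{N}_+$ to vanish, which is compatible with $u\in R(I_+-\kappa G_+)\subseteq \mc{N}_+^{\perp}$ (by selfadjointness of $I_+-\kappa G_+$). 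A more computational alternative, available in case one prefers an explicit construction of the preimage, is to apply the spectral theorem to $G_+$ and use monotone convergence to show that $\int_0^{1/\kappa}(1-\kappa\lambda)^{-2}\,d\|E_\lambda u\|^2<\infty$, and then define $v_0:=\int_0^{1/\kappa}(1-\kappa\lambda)^{-1}\,dE_\lambda u$; but the weak-limit approach above is shorter and coordinate-free.
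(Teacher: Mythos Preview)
Your proof is correct and takes a genuinely different route from the paper's. The paper fixes the sequence $x_n=(I_+-(\kappa-\tfrac{1}{n})G_+)^{-1}u$ and proves it is \emph{Cauchy}: using the functional calculus it shows that the positive operators $\Delta_n=(I_+-(\kappa-\tfrac{1}{n})G_+)^{-1}$ are monotone increasing and commute, whence $\langle x_m,x_n\rangle\geq\|x_n\|^2$ for $m\geq n$, and this forces $\|x_m-x_n\|^2\leq\|x_m\|^2-\|x_n\|^2\to0$. The strong limit $x$ then satisfies $(I_+-\kappa G_+)x=u$. Your argument replaces this monotonicity computation by Banach--Alaoglu: boundedness of $\{v_{\tau_n}\}$ gives a weak accumulation point $v_0$, and weak-to-weak continuity of $A_\kappa$ together with the norm-vanishing of $(\kappa-\tau_n)G_+v_{\tau_n}$ yields $u=A_\kappa v_0$. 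Your approach is shorter and coordinate-free, and it makes no use of the functional calculus or the order structure of $\{\Delta_n\}$; the paper's approach, on the other hand, actually produces the preimage as a \emph{strong} limit of the $x_n$, which is a slightly stronger byproduct (though not needed downstream). Your side remark that Lemma~\ref{lema:primera_serie} is consistent with but not needed here is accurate, as is the spectral-measure alternative you sketch.
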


\begin{proof} In the following we prove the statement for a vector in $\HH_+$, the proof for vectors in $\HH_-$ is analogous.
Let $u\in\HH_+$ be such that $\lim_{\tau\to \kappa}\|(I_+-\tau G_+)^{-1}u\|<+\infty$. Then
\[
u\in\mc{D}_+=\overline{R(I_+-\kappa G_+)}.
\]
Consider the sequence $(x_n)_{n\in \NN}$ in $\mc{D}_+$ defined by $x_n=\big(I_+-(\kappa-\frac{1}{n})G_+\big)^{-1}u$, with $n\in\NN$. By hypothesis,
$(x_n)_{n\in \NN}$ is bounded. Assume that $M>0$ is such that $\|x_n\|\leq M$ for every $n\in\NN$. Then,
\begin{equation}\label{signos}
\big\|(I_+-\kappa G_+)x_n-u\big\|=\frac{1}{n}\ \big\|G_+\big(I_+-(\kappa-\tfrac{1}{n})G_+\big)^{-1}u\big\|\leq 
\frac{M}{n} \|G_+\|\to 0.
\end{equation}

We claim that $(x_n)_{n\in \NN}$ is a Cauchy sequence. To prove it consider the sequence of positive definite operators $(\Delta_n)_{n\in \NN}$ defined by
\[
\Delta_n= \Big(I_+-\big(\kappa-\tfrac{1}{n}\big)G_+\Big)^{-1}.
\]
By the functional calculus for selfadjoint operators, given $m,n\in\N$, $m\geq n$ implies that $\Delta_n\leq\Delta_m$,
and $\Delta_n$ commutes with $\Delta_m$, see e.g. \cite{Conway}.
Then,
\begin{align*}
0\leq \|x_n\|^2 &=\PI{\Delta_nu}{\Delta_nu}=\PI{\Delta_n \big( \Delta_n^{1/2}u\big)}{\Delta_n^{1/2}u}  \\
&\leq \PI{\Delta_m \big( \Delta_n^{1/2}u\big)}{\Delta_n^{1/2}u} = \PI{\Delta_mu}{\Delta_nu}=\PI{x_m}{x_n}.
\end{align*}
Hence,
\begin{align*}
\|x_n-x_m\|^2 &= \|x_n\|^2 - 2\real\big(\PI{x_n}{x_m}\big) + \|x_m\|^2 \\ &\leq \|x_n\|^2-2\|x_n\|^2 + \|x_m\|^2=\|x_m\|^2-\|x_n\|^2 \to 0,
\end{align*}
as $m,n\ra\infty$. 

Since $\mc{D}_+$ is closed, there exists $x\in \mc{D_+}$ such that $\|x_n-x\|\ra 0$ as $n\ra\infty$. 
Thus, \eqref{signos} says that $u=(I_+ -\kappa G_+)x$, i.e. $u\in R(I_+ -\kappa G_+)$.
\end{proof}

\medskip

\begin{lema}\label{lema:necesaria_nucleos}
If there exist $y\in Q(G)$ and $\gamma\in[-\kappa,\kappa]$ such that $(I+\gamma G)y\in\HH_\pm\setminus\set{0}$, then $\N_\mp\neq\set{0}$.
\end{lema}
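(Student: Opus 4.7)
The plan is to carry out a case analysis on $\gamma\in[-\kappa,\kappa]$ after decomposing everything according to the canonical splitting \eqref{eq:descomposicion}. I will treat the case $(I+\gamma G)y\in\HH_+\setminus\{0\}$; the other case is entirely symmetric.

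First I would write $y=y^++y^-+y^0$ with $y^\pm\in\HH_\pm$, $y^0\in N(G)$. Since $G$ acts as $\mathrm{diag}(G_+,-G_-,0)$, the normal equation decouples as in \eqref{ecu desag}, and $y\in Q(G)$ translates into the single scalar identity
\[
\PI{G_+y^+}{y^+}=\PI{G_-y^-}{y^-}.
\]
The assumption $(I+\gamma G)y\in\HH_+\setminus\{0\}$ forces the $\HH_-$ and $N(G)$ components of $(I+\gamma G)y$ to vanish, i.e.\ $(I_--\gamma G_-)y^-=0$ and $y^0=0$.

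Next, I would rule out $\gamma\in(-\kappa,\kappa)$: in that range Proposition \ref{prop:uniformemente} (via the congruence $T^\#T+\lambda V^\#V\sim I+\gamma G$) together with \cite[Prop.\ 3.11]{GZMMP21} guarantees that $I_--\gamma G_-$ is invertible (since $\|G_-\|=1/\kappa$). Hence $y^-=0$, so $\PI{G_+y^+}{y^+}=0$, and positive definiteness of $G_+$ on $\HH_+$ gives $y^+=0$. Together with $y^0=0$ this makes $y=0$, contradicting $(I+\gamma G)y\neq0$.

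Then I would rule out $\gamma=-\kappa$: here $I_--\gamma G_-=I_-+\kappa G_-\geq I_-$ is positive definite and invertible, and the same argument forces $y=0$, a contradiction. Thus $\gamma=\kappa$ is the only remaining possibility, and the condition $(I_--\kappa G_-)y^-=0$ says exactly that $y^-\in\N_-$ (recall $\N_-=N(I+\kappa G)=N(I_--\kappa G_-)\subseteq\HH_-$). Finally, if $\N_-=\{0\}$ we would get $y^-=0$, hence (again by the $Q(G)$ identity and positivity of $G_+$) $y^+=0$ and $y=0$, contradicting $(I+\kappa G)y\neq0$. Therefore $\N_-\neq\{0\}$, as claimed. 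The $\HH_-$ case is identical after swapping signs and indices, yielding $\N_+\neq\{0\}$.

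I do not expect any real obstacle: the whole argument is a linear-algebraic unpacking of the block decomposition, and the only nontrivial input is the positive definiteness of $G_\pm$ on $\HH_\pm$ combined with the spectral fact $\|G_\pm\|=1/\kappa$, both of which are already recorded in the excerpt. The mild point to be careful about is correctly matching which of $\N_+$ or $\N_-$ appears depending on the sign of the target subspace $\HH_\pm$, since this is where the $\pm/\mp$ convention in the statement is crucial.
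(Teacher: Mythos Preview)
Your proof is correct and follows essentially the same approach as the paper: decompose $y$ along $\HH=\HH_+\oplus\HH_-\oplus N(G)$ and exploit the positive definiteness of $G_\pm$ together with the $Q(G)$ identity to force $y=0$ whenever the relevant $\N_\mp$ is trivial. The paper organizes the argument a bit more economically by assuming $\N_\mp=\{0\}$ from the outset (which makes $I_\pm+\gamma G_\pm$ injective for every $\gamma\in[-\kappa,\kappa]$ at once, so no case split on $\gamma$ is needed), but the underlying idea is identical.
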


\begin{proof}
Suppose that $(I+\gamma G)y=u_0^-$, with $u_0\in\HH_-\setminus\set{0}$, $y\in Q(G)$ and $\gamma\in[-\kappa,\kappa]$, and assume that
$\N_+=\set{0}$. If $y=y^++y^-+y^0$ with $y^\pm\in\HH_\pm$ and $y^0\in N(G)$, by
\eqref{ecu desag} we get $y^+=0$ and $y^-\neq 0$. But since $y\in Q(G)$,
\[
0=\PI{Gy}{y}=\PI{Gy^-}{y^-}=-\PI{G_-y^-}{y^-}<0.
\]
Then $\N_+\neq\set{0}$.

 A similar argument for a vector $u_0^+\in\HH_+\setminus\set{0}$ proves that $\N_-\neq\set{0}$. 
\end{proof}

\medskip


\begin{prop}\label{prop:necesarias_suficientes}
For every $u_0\in\HH$ there exist $y_0\in Q(G) $ and $\gamma\in[-\kappa,\kappa]$ such that
\[
(I+\gamma G)y_0=u_0,
\]
if and only if $\mc{N}_+\neq\set{0}$ and $\mc{N}_-\neq\set{0}$.
\end{prop}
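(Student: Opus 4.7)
The plan is to treat the two implications separately, with sufficiency being the substantial direction. For necessity, I first note that $G$ is indefinite, since it is congruent to the indefinite operator $V^\#V$ through the invertible positive operator $(L^\#L)^{-1/2}$; hence both $\HH_+$ and $\HH_-$ in the canonical decomposition \eqref{eq:descomposicion} are nontrivial. Picking any $u_0\in\HH_+\setminus\set{0}$, the hypothesis supplies $y_0\in Q(G)$ and $\gamma\in[-\kappa,\kappa]$ with $(I+\gamma G)y_0=u_0\in\HH_+\setminus\set{0}$, so Lemma \ref{lema:necesaria_nucleos} forces $\mc{N}_-\neq\set{0}$; the symmetric choice $u_0\in\HH_-\setminus\set{0}$ yields $\mc{N}_+\neq\set{0}$.

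For sufficiency, decompose a given $u_0$ as $u_0=u_0^++u_0^-+u_0^0$ according to \eqref{eq:descomposicion}. Set $y_0^0=u_0^0$ and note that the block structure \eqref{ecu desag} reduces the problem to finding $\gamma\in[-\kappa,\kappa]$ and $y_0^\pm\in\HH_\pm$ with $(I_\pm\pm\gamma G_\pm)y_0^\pm=u_0^\pm$ together with the scalar neutrality $\PI{G_+y_0^+}{y_0^+}=\PI{G_-y_0^-}{y_0^-}$. On the open interval $\gamma\in(-\kappa,\kappa)$ both operators are invertible, the candidates $y_0^\pm=(I_\pm\pm\gamma G_\pm)^{-1}u_0^\pm$ are uniquely determined, and the constraint reduces to $\phi(\gamma)=\psi(\gamma)$, where
\[
\phi(\gamma)=\PI{G_+(I_+ +\gamma G_+)^{-1}u_0^+}{(I_+ +\gamma G_+)^{-1}u_0^+},\qquad \psi(\gamma)=\PI{G_-(I_- -\gamma G_-)^{-1}u_0^-}{(I_- -\gamma G_-)^{-1}u_0^-}.
\]

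Using the functional calculus for $G_\pm$ (whose spectra lie in $[0,1/\kappa]$), I verify that $\phi$ is continuous and nonincreasing and $\psi$ is continuous and nondecreasing on $(-\kappa,\kappa)$, and extend them by monotone limits to $[-\kappa,\kappa]$ (possibly $+\infty$). The values $\phi(\kappa^-)$ and $\psi(-\kappa^+)$ are always finite, because the operators $I_\pm\pm\kappa G_\pm$ remain invertible; the other two, $\phi(-\kappa^+)$ and $\psi(\kappa^-)$, are finite iff $u_0^\pm\in R(I_\pm -\kappa G_\pm)$ respectively, with limit values $\PI{G_\pm r_0^\pm}{r_0^\pm}$ for the unique $r_0^\pm\in\mc{D}_\pm$ solving $(I_\pm -\kappa G_\pm)r_0^\pm=u_0^\pm$, a consequence of (the proof of) Lemma \ref{lema:en_el_rango}.

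If $\phi(-\kappa^+)\geq\psi(-\kappa^+)$ and $\phi(\kappa^-)\leq\psi(\kappa^-)$, the intermediate value theorem for $\phi-\psi$ supplies $\gamma^*\in(-\kappa,\kappa)$ with $\phi(\gamma^*)=\psi(\gamma^*)$, and the construction is complete. Otherwise, by the monotonicity either $\phi(-\kappa^+)<\psi(-\kappa^+)$ or $\phi(\kappa^-)>\psi(\kappa^-)$; assume the former (the latter is symmetric). Then $\phi(-\kappa^+)<\infty$, Lemma \ref{lema:en_el_rango} provides $r_0^+\in\mc{D}_+$ with $(I_+ -\kappa G_+)r_0^+=u_0^+$, and I take $\gamma=-\kappa$, $y_0^-=(I_- +\kappa G_-)^{-1}u_0^-$, and $y_0^+=z_+ + r_0^+$ for $z_+\in\mc{N}_+$. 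The identities $G_+|_{\mc{N}_+}=\kappa^{-1}I$ and $\mc{N}_+\perp\mc{D}_+$ turn the neutrality into $\kappa^{-1}\|z_+\|^2=\psi(-\kappa^+)-\phi(-\kappa^+)>0$, which has a solution in $\mc{N}_+$ precisely because $\mc{N}_+\neq\set{0}$. The main obstacle throughout is this boundary analysis: when the continuous sweep of $\phi-\psi$ on the open interval does not change sign, the solvability statement of Lemma \ref{lema:en_el_rango} together with the nontriviality of $\mc{N}_\pm$ supply the exact degree of freedom needed to complete the construction.
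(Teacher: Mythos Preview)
Your proof is correct and follows essentially the same route as the paper: necessity via Lemma~\ref{lema:necesaria_nucleos}, and sufficiency by introducing the scalar functions $f_\pm(\tau)=\|G_\pm^{1/2}(I_\pm\pm\tau G_\pm)^{-1}u_0^\pm\|$ (your $\phi,\psi$ are exactly $f_+^2,f_-^2$), invoking the intermediate value theorem for the interior case, and handling the boundary via Lemma~\ref{lema:en_el_rango} together with the extra degree of freedom provided by $\mc{N}_\pm$. One minor point: your dichotomy ``$\phi(-\kappa^+)\geq\psi(-\kappa^+)$ and $\phi(\kappa^-)\leq\psi(\kappa^-)$ $\Rightarrow$ interior root by IVT'' is not quite airtight when one of the endpoint inequalities is an equality while $\phi-\psi$ has strict sign on all of $(-\kappa,\kappa)$; the paper avoids this by splitting instead into ``an interior root exists'' versus ``$f_+-f_-$ has constant sign on $(-\kappa,\kappa)$'', and in that residual edge case your own boundary construction still works with $z_\pm=0$.
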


\begin{proof} The necessity follows by Lemma \ref{lema:necesaria_nucleos}.
To prove the converse, assume that $\mc{N}_+\neq\set{0}$ and $\mc{N}_-\neq\set{0}$. Let $u_0\in\HH$, and consider the
decomposition $u_0=u_0^++u_0^-+u_0^0$ with $u_0^\pm\in\HH_\pm$ and $u_0^0\in N(G)$, and the real valued functions $f_\pm$ defined by
\[
f_\pm(\tau)=\|G_\pm^{1/2}(I_\pm\pm\tau G_\pm)^{-1}u_0^\pm\|,\quad \tau\in(-\kappa,\kappa).
\]
If there exists $\tau_0\in(-\kappa,\kappa)$ such that $f_+(\tau_0)=f_-(\tau_0)$, then setting
$y_0=(I_++\tau_0G_+)^{-1}u_0^++(I_--\tau G_-)^{-1}u_0^-+u_0^0$ yields $y_0\in Q(G) $ and $(I+\tau_0 G)y_0=u_0$.

On the other hand, assume that $f_+(\tau)>f_-(\tau)$ for every $\tau\in(-\kappa,\kappa)$. By
the functional calculus for selfadjoint operators, 
$f_-$ is a monotone increasing function of $\tau$ on the interval $[0,\kappa)$.
Since the extension of $f_+$ is a continuous function of $\tau$ on the compact interval $[0,\kappa]$, it follows that
\[
\lim_{\tau\to\kappa}\|(I_--\tau G_-)^{-1}u_0^-\|\leq\lim_{\tau\to\kappa}\|G_-^{-1/2}\|\, f_-(\tau)<+\infty.
\]
Lemma \ref{lema:en_el_rango} then assures that $u_0^-\in R(I_--\kappa G_-)$. Now, since
$\mc{N}_-\neq\set{0}$, let us choose
$y\in\mc{N}_-$ with $\|y\|=1$. Hence, considering that $G_-y=\tfrac{1}{\kappa}y$
and $(I_--\kappa G_-)^\dag u_0^-\bot y$, and setting
\[
y_0=(I_++\kappa G_+)^{-1}u_0^++(I_--\kappa G_-)^\dag u_0^-+\alpha_-y+u_0^0,
\]
with
\begin{equation}\label{eq:alpha}
\alpha_-:=\bigg(\kappa\Big(\|G_+^{1/2}(I_++\kappa G_+)^{-1}u_0^+\|^2-\|G_-^{1/2}(I_--\kappa G_-)^\dag u_0^-\|^2\Big)\bigg)^{1/2},
\end{equation}
yields $y_0\in Q(G) $ and $(I+\kappa G)y_0=u_0$.

A similar argument holds if we assume that $f_+(\tau)<f_-(\tau)$ for every $\tau\in(-\kappa,\kappa)$, and thus the proof is
complete.
\end{proof}

\begin{lema}\label{lema:malditos_nucleos} Under Hypothesis \ref{hipo sec 4},
\begin{equation*}
\mc{N}_\pm\subseteq (L^\#L)^{1/2}\big(N(T)\big)^\bot.
\end{equation*}
\end{lema}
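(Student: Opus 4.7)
My strategy is to transport the kernel condition $(I \mp \kappa G)v = 0$ back to a kernel condition for $T^\#T + \rho_\mp V^\#V$ on $\HH$ via the (invertible) positive square root $(L^\#L)^{1/2}$, and then exploit self-adjointness together with the non-vanishing of the extremal parameters $\rho_\pm$ to produce the required orthogonality.

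The first step is the congruence
\[
I \mp \kappa G = (L^\#L)^{-1/2}\bigl(L^\#L \mp \kappa V^\#V\bigr)(L^\#L)^{-1/2} = (L^\#L)^{-1/2}(T^\#T + \rho_\mp V^\#V)(L^\#L)^{-1/2},
\]
where I used $\rho \mp \kappa = \rho_\mp$. Under Hypothesis \ref{hipo sec 4} the operator $L^\#L = T^\#T + \rho V^\#V$ is positive definite (because $\rho = (\rho_-+\rho_+)/2 \in (\rho_-,\rho_+)$ by Proposition \ref{prop:uniformemente}), so $(L^\#L)^{-1/2}$ is a bijection and $v \in \mc{N}_\pm$ is equivalent to $u := (L^\#L)^{-1/2} v$ lying in $N(T^\#T + \rho_\mp V^\#V)$.

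The key auxiliary observation is that $\rho_\pm \neq 0$. Indeed, Corollary \ref{cor:intervalo} asserts that $T^\#T + \rho V^\#V \geq 0$ for every $\rho \in [\rho_-,\rho_+]$; if $0$ belonged to that interval then $T^\#T \geq 0$, contradicting the indefiniteness of $T^\#T$ required by Hypothesis \ref{hipo sec 4}. Now given any $z \in N(T)$, $T^\#T z = 0$, so $(T^\#T + \rho_\mp V^\#V)z = \rho_\mp V^\#V z$, and self-adjointness of $T^\#T + \rho_\mp V^\#V$ yields
\[
0 = \PI{(T^\#T + \rho_\mp V^\#V)u}{z} = \PI{u}{(T^\#T + \rho_\mp V^\#V)z} = \rho_\mp\,\K{Vu}{Vz}_\EE,
\]
so that $\K{Vu}{Vz}_\EE = 0$ for every $z \in N(T)$.

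To close the argument, the identity $T^\#T u = -\rho_\mp V^\#V u$ gives $L^\#L u = (\rho - \rho_\mp)V^\#V u = \pm\kappa V^\#V u$, and therefore
\[
\PI{v}{(L^\#L)^{1/2}z} = \PI{(L^\#L)^{1/2}u}{(L^\#L)^{1/2}z} = \PI{L^\#L u}{z} = \pm\kappa\,\K{Vu}{Vz}_\EE = 0
\]
for every $z \in N(T)$, which is exactly $v \in \bigl((L^\#L)^{1/2}(N(T))\bigr)^\bot$. The only genuinely non-routine step is the observation that $\rho_\pm \neq 0$; once that is secured, the self-adjointness calculation and the congruence do all the work mechanically, and both inclusions $\mc{N}_\pm \subseteq (L^\#L)^{1/2}(N(T))^\bot$ are handled in parallel.
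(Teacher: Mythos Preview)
Your proof is correct and follows essentially the same route as the paper: both arguments rest on the congruence $I\mp\kappa G=(L^\#L)^{-1/2}(T^\#T+\rho_\mp V^\#V)(L^\#L)^{-1/2}$ to identify $\mc{N}_\pm$ with $(L^\#L)^{1/2}\big(N(T^\#T+\rho_\mp V^\#V)\big)$, and then on the self-adjointness computation $0=\PI{(T^\#T+\rho_\mp V^\#V)u}{z}=\rho_\mp\K{Vu}{Vz}_\EE$ for $z\in N(T)$. The only differences are cosmetic: the paper phrases the conclusion as the subspace inclusion $N(T^\#T+\rho_\mp V^\#V)\subseteq V^\#V(N(T))^\bot=L^\#L(N(T))^\bot$ and then applies $(L^\#L)^{1/2}$, whereas you compute $\PI{v}{(L^\#L)^{1/2}z}$ directly via $L^\#Lu=\pm\kappa V^\#Vu$; and you make explicit the fact that $\rho_\mp\neq 0$ (forced by the indefiniteness of $T^\#T$), which the paper uses without comment when cancelling $\rho_-$.
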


\begin{proof} We prove the statement for $\N_+$, a similar argument holds for $\N_-$.
On the one hand, from
\[
T^\#T+\rho_- V^\#V=(L^\#L)^{1/2}(I -\kappa G)(L^\#L)^{1/2},
\]
it follows that $\N_+=(L^\#L)^{1/2}\big(N(T^\#T+\rho_- V^\#V))$.

On the other hand, if $x\in N(T^\#T+\rho_-V^\#V)$ and $y\in N(T)$, then
\[
\rho_-\PI{x}{V^\#Vy}=\PI{x}{(T^\#T+\rho_-V^\#V)y}=\PI{(T^\#T+\rho_-V^\#V)x}{y}=0,
\]
i.e., $x\in V^\#V\big(N(T)\big)^\bot=L^\#L\big(N(T)\big)^\bot$. Hence, we have that
$N(T^\#T+\rho_-V^\#V)\subseteq L^\#L\big(N(T)\big)^\bot$. Applying $(L^\#L)^{1/2}$ to both sides of the inclusion,
\[
\N_+\subseteq (L^\#L)^{1/2}\Big((L^\#L)^{-1}\big(N(T)^\bot\big)\Big)=(L^\#L)^{1/2}\big(N(T)\big)^\bot. \qedhere
\]
\end{proof}

We are now in conditions to state the main result of this section, establishing the necessary and sufficient conditions for
Problem \ref{pb 1} to admit a solution for every initial data point. We no longer assume that
Hypothesis \ref{hipo sec 4} hold.

\begin{thm}\label{teo:ida_y_vuelta} Assume that $N(T)\cap N(V)=\set{0}$. The following conditions are equivalent:
\begin{enumerate}[label=\roman*)]
\item $\ZZ(w,z)\neq\varnothing$ for every $(w,z)\in\KK\x\EE$\textcolor{red};
\item there exists $\alpha>0$ such that $\K{Ty}{Ty}\geq\alpha\|y\|^2$
for every $y\in\CV$, and
\begin{equation}\label{eq:supinf}
\sup_{x\in\mc{P}^-(V)}\frac{\K{Tx}{Tx}}{\K{Vx}{Vx}}\quad\quad\text{and}\quad\quad\inf_{x\in\mc{P}^+(V)}\frac{\K{Tx}{Tx}}{\K{Vx}{Vx}}
\end{equation}
are attained.
\end{enumerate}
\end{thm}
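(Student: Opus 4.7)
The plan is to reduce both implications to Proposition \ref{prop:necesarias_suficientes} via the translation of the normal equation of Theorem \ref{teo:derivada} into the auxiliary pencil equation $(I+\gamma G)\tilde{y}=u_0$ from Section \ref{condiciones sufi}. First I would set up this translation: under Hypothesis \ref{hipo sec 4} (assumed in (ii); derived in (i) from Proposition \ref{prop:vuelta}), Proposition \ref{prop:uniformemente} makes $L^\#L=T^\#T+\rho V^\#V$ positive definite at $\rho=(\rho_-+\rho_+)/2$, so $(L^\#L)^{1/2}$ is bijective with bounded inverse. The congruence $T^\#T+\lambda V^\#V=(L^\#L)^{1/2}(I+\gamma G)(L^\#L)^{1/2}$ with $\gamma=\lambda-\rho$, together with the substitution $\tilde{y}=(L^\#L)^{1/2}y$, converts the normal equation $(T^\#T+\lambda V^\#V)y=-T^\#(Tx_0-w_0)$ from Theorem \ref{teo:derivada} into
\[
(I+\gamma G)\tilde{y}=u_0,\quad\tilde{y}\in Q(G),\ \gamma\in[-\kappa,\kappa],
\]
where $u_0:=-(L^\#L)^{-1/2}T^\#(Tx_0-w_0)$. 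Using bijectivity and Lemma \ref{lema:malditos_nucleos}, $\mc{N}_\pm\neq\set{0}$ is equivalent to $N(T^\#T+\rho_\mp V^\#V)\neq\set{0}$, which by the variational description of $\rho_\pm$ and positive semidefiniteness of the pencil on $[\rho_-,\rho_+]$ is in turn equivalent to the attainment of the infimum (resp.\ supremum) in \eqref{eq:supinf}.

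For (ii)$\Rightarrow$(i) I would use the attainment of sup and inf to obtain $\mc{N}_\pm\neq\set{0}$; then Proposition \ref{prop:necesarias_suficientes} supplies, for every $u_0\in\HH$, some $\tilde{y}\in Q(G)$ and $\gamma\in[-\kappa,\kappa]$ with $(I+\gamma G)\tilde{y}=u_0$. For any $(w_0,z_0)\in\KK\x\EE$ and any $x_0$ with $Vx_0=z_0$, applying this to $u_0=-(L^\#L)^{-1/2}T^\#(Tx_0-w_0)$ and tracing the substitutions back produces $y\in\CV$ and $\lambda\in[\rho_-,\rho_+]$ satisfying the normal equation of Theorem \ref{teo:derivada}, so $\tilde{x}=x_0+y\in\ZZ(w_0,z_0)$.

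For (i)$\Rightarrow$(ii) the uniform positivity is Proposition \ref{prop:vuelta}, so only $\mc{N}_\pm\neq\set{0}$ remains. As $(w_0,z_0,x_0)$ varies with $Vx_0=z_0$, $u_0$ ranges over $X:=(L^\#L)^{-1/2}(N(T)^\bot)=((L^\#L)^{1/2}(N(T)))^\bot$, and (i) provides solvability of $(I+\gamma G)\tilde{y}=u_0$ with $\tilde{y}\in Q(G)$, $\gamma\in[-\kappa,\kappa]$ for each $u_0\in X$. If $N(T)=\set{0}$ then $X=\HH$, and Lemma \ref{lema:necesaria_nucleos}, applied to nonzero vectors in $\HH_\pm$ (both nontrivial since $V^\#V$ is indefinite), immediately yields $\mc{N}_\pm\neq\set{0}$. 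If $N(T)\neq\set{0}$, a direct computation gives $G(L^\#L)^{1/2}x=\tfrac{1}{\rho}(L^\#L)^{1/2}x$ for $x\in N(T)$, so $(L^\#L)^{1/2}(N(T))$ is an eigenspace of $G$ for the eigenvalue $\tfrac{1}{\rho}$ and lies entirely in one of $\HH_\pm$ according to $\sgn(\rho)$; moreover Lemma \ref{lema:malditos_nucleos} precludes $\rho\in\set{-\kappa,\kappa}$, because otherwise $(L^\#L)^{1/2}(N(T))\subseteq\mc{N}_\pm\subseteq((L^\#L)^{1/2}(N(T)))^\bot$ would force $N(T)=\set{0}$. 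Combined with $\|G_\pm\|=\tfrac{1}{\kappa}$, the eigenspace is a proper subspace of the $\HH_\pm$ containing it, so both $\HH_+\cap X$ and $\HH_-\cap X$ are nontrivial; choosing a nonzero $u_0$ in each and applying Lemma \ref{lema:necesaria_nucleos} gives $\mc{N}_-\neq\set{0}$ and $\mc{N}_+\neq\set{0}$, respectively.

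The main obstacle will be this last step of (i)$\Rightarrow$(ii): ensuring that $X$ meets both $\HH_+$ and $\HH_-$ nontrivially when $N(T)\neq\set{0}$; this is resolved by the eigenvalue/orthogonality argument above, leveraging Lemma \ref{lema:malditos_nucleos} to exclude $\rho\in\set{-\kappa,\kappa}$ and thereby preventing the eigenspace $(L^\#L)^{1/2}(N(T))$ from saturating an entire $\HH_\pm$. A minor technical point is that if the midpoint $\rho=(\rho_-+\rho_+)/2$ fails to make $L^\#L$ positive definite (which can only happen when $\rho_-=-\rho_+$ and $N(T)\neq\set{0}$), I would shift $\rho$ slightly within $(\rho_-,\rho_+)$ and work with the asymmetric interval $[\rho_--\rho,\rho_+-\rho]$ for $\gamma$, without affecting the conclusions.
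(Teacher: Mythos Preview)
Your approach is essentially the same as the paper's: both directions go through the translation to the pencil $I+\gamma G$ and Proposition~\ref{prop:necesarias_suficientes}/Lemma~\ref{lema:necesaria_nucleos}, the only variation being that for $(i)\Rightarrow(ii)$ you argue directly that $(L^\#L)^{1/2}(N(T))$ is a proper subspace of the relevant $\HH_\pm$ via the strict eigenvalue inequality $|1/\rho|<1/\kappa$, whereas the paper first obtains $\N_+\neq\{0\}$ from $\HH_-\subseteq X$ and then bootstraps via Lemma~\ref{lema:malditos_nucleos} to get $\HH_+\cap X\supseteq\N_+\neq\{0\}$. Your closing ``minor technical point'' is unnecessary and slightly confused: under Hypothesis~\ref{hipo sec 4} (which you have from Proposition~\ref{prop:vuelta}), $T^\#T+\rho V^\#V$ is positive definite for \emph{every} $\rho\in(\rho_-,\rho_+)$, so the midpoint always works (and if $\rho=0$ then $T^\#T$ is positive definite, forcing $N(T)=\{0\}$).
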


\begin{proof}
{\it ii)$\to$i):} Suppose that item {\it ii} holds and let $(w_0,z_0)\in\KK\x\EE$. Since the supremum and infimum in \eqref{eq:supinf} being
attained is equivalent to $\N_+\neq\set{0}$ and $\N_-\neq\set{0}$, by Proposition \ref{prop:necesarias_suficientes} for every $u_0\in\HH$
there exist $\gamma\in[-\kappa,\kappa]$ and $\widetilde{y}_0\in Q(G)$ such that
\[
(I+\gamma G)\widetilde{y}_0=u_0.
\]
Setting $u_0=(L^\#L)^{-1/2}T^\#(TV^\dag z_0-w_0)$, applying $(L^\#L)^{1/2}$ to both sides of the equation, and taking
$y_0=(L^\#L)^{-1/2}\widetilde{y}_0$ and $\lambda=\gamma+\rho$ the result follows.

\smallskip

{\it i)$\to$ii):} Assume that $\ZZ(w,z)\neq\varnothing$ for every $(w,z)\in\KK\x\EE$.
By Proposition \ref{prop:vuelta}, it suffices to show that the infimum and supremum in \eqref{eq:supinf} are attained,
or equivalently, that $\mc{N}_+\neq\set{0}$ and $\mc{N}_-\neq\set{0}$.
By Theorem \ref{teo:derivada}, for every $w_0\in\KK$ there exist $\lambda\in[\rho_-,\rho_+]$ and $y\in\CV$ such that
$(T^\#T+\lambda V^\#V)y=T^\#w_0$. Equivalently, $(I+\gamma G)\widetilde{y}=u_0$, where $\gamma=\lambda-\rho\in[-\kappa,\kappa]$,
$\widetilde{y}=(L^\#L)^{1/2}y\in Q(G)$ and $u_0=(L^\#L)^{-1/2}T^\#w_0$. Hence, for every
\[
u_0\in(L^\#L)^{1/2}\big(N(T)\big)^\bot,
\]
there exist $\gamma\in[-\kappa,\kappa]$ and $\widetilde{y}\in Q(G)$
such that $(I+\gamma G)\widetilde{y}=u_0$.

We now show that $\HH_+\cap (L^\#L)^{1/2}(N(T))^\bot$ and 
$\HH_-\cap (L^\#L)^{1/2}(N(T))^\bot$ are non trivial subspaces, which by
Lemma \ref{lema:necesaria_nucleos} in turn implies that $\mc{N}_-\neq\set{0}$ and $\mc{N}_+\neq\set{0}$.
Let us assume that $\rho>0$. It holds that $(L^\#L)^{1/2}\big(N(T^\#T)\big)\subseteq\HH_+$. In fact,
$T^\#Tx=0$ if and only if $L^\#Lx=\rho V^\#V x$, or equivalently,
$(L^\#L)^{1/2}x=\rho (L^\#L)^{-1/2}V^\#Vx=\rho G(L^\#L)^{1/2}x$.
If $(L^\#L)^{1/2}x=x_+ + x_- + x_0$, with
$x_\pm\in \HH_\pm$ and $x_0\in N(G)$, then
\[
x_+=\rho G_+ x_+, \quad x_-=-\rho G_- x_-, \quad \text{and} \quad x_0=\rho\cdot 0. 
\] 
The last equation says that $x_0=0$, and $x_-=0$ because $\rho>0$ and $G_-\in \mc{L}(\HH_-)^+$.
Therefore, $(L^\#L)^{1/2}x=x_+$. Then
\[
(L^\#L)^{1/2}\big(N(T)\big)\subseteq(L^\#L)^{1/2}\big(N(T^\#T)\big)\subseteq\HH_+.
\]
Hence,
\[
\HH_-\subseteq(L^\#L)^{1/2}\big(N(T)\big)^\bot,
\]
and, by Lemma \ref{lema:necesaria_nucleos},
$\N_+\neq\set{0}$. But by Lemma \ref{lema:malditos_nucleos}
\[
\N_+\subseteq\HH_+\cap(L^\#L)^{1/2}\big(N(T)\big)^\bot.
\]
Then $\HH_+\cap(L^\#L)^{1/2}\big(N(T)\big)^\bot\neq0$, which implies that $\N_-\neq\set{0}$.
A similar argument holds for the case $\rho<0$, and thus the proof is complete.

\end{proof}

\section{Description of the set of solutions}\label{soluciones}

In this section we consider a selfadjoint operator $G\in \mc{L}(\HH)$. Decomposing it as the sum of two positive operators with orthogonal ranges
$G=G_+-G_-$, by \cite[Prop. 3.11]{GZMMP21} we have that $I + \gamma G$ is positive semidefinite if and only if
$\gamma\in[-\|G_+\|^{-1} ,\|G_-\|^{-1}]$, and it is positive definite if and only if $\gamma\in(-\|G_+\|^{-1},\|G_-\|^{-1})$.

For simplicity, we assume that $\kappa:=\|G_+\|=\|G_-\|$. Hence, $I+\gamma G$ is positive semidefinite if and only if  $\gamma\in [-\kappa,\kappa ]$. 

Also, we assume that the subspaces $\N_+$ and $\N_-$ given by \eqref{eq:def_nucleos} are non trivial, ensuring that for every $u\in\HH$ there exist $y\in Q(G)$ and $\gamma\in [-\kappa,\kappa ]$
such that
\begin{equation}\label{eq:eq_normal_sec_5}
(I+\gamma G)y=u.
\end{equation}


\medskip

From now we consider a fixed vector $u_0\in\HH$. If $(I+\gamma G)y=u_0$, for some $\gamma\in[-\kappa,\kappa]$ and $y\in Q(G)$,
then from \eqref{ecu desag} it holds that $y^0=u_0^0$, and
\begin{align*}
\left\{
\begin{array}{rcl}
(I_++\gamma G_+)y^+ &=& u_0^+ \\
(I_--\gamma G_-)y^- &=& u_0^-
\end{array}.
\right.
\end{align*}
If $u_0\in N(G)$ then $y=u_0$ is the unique solution. On the other hand,
if $u_0\notin N(G)$, then there is
a unique $\gamma\in[-\kappa,\kappa]$ for any solution, as the next proposition shows.

\begin{prop}\label{prop:unicidad} 
If $u_0\notin N(G)$ then there exists a unique $\gamma\in[-\kappa,\kappa]$ such that
$(I+\gamma G)y=u_0$ admits a solution $y\in Q(G)$.
\end{prop}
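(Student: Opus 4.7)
The plan is to analyze the neutrality constraint $\PI{Gy}{y}=0$ as a strictly monotone scalar function of $\gamma$, obtaining uniqueness on the open interval $(-\kappa,\kappa)$, and to separately rule out coexistent solutions at the endpoints $\pm\kappa$.

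Using the canonical decomposition $\HH=\HH_+\oplus\HH_-\oplus N(G)$, I would write $u_0=u_0^++u_0^-+u_0^0$ and $y=y^++y^-+y^0$. From \eqref{ecu desag}, $y^0=u_0^0$ is forced, and for $\gamma\in(-\kappa,\kappa)$ both operators $I_\pm\pm\gamma G_\pm$ are positive definite, so $y^\pm(\gamma):=(I_\pm\pm\gamma G_\pm)^{-1}u_0^\pm$ are uniquely determined. The remaining constraint $y\in Q(G)$ reduces to the single scalar equation $F(\gamma)=0$, where
\[
F(\gamma):=\PI{G_+y^+(\gamma)}{y^+(\gamma)}-\PI{G_-y^-(\gamma)}{y^-(\gamma)},\qquad\gamma\in(-\kappa,\kappa).
\]

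The key step is to show that $F$ is strictly decreasing. Since $G_\pm$ commutes with $(I_\pm\pm\gamma G_\pm)^{-1}$, the spectral calculus yields
\[
F'(\gamma)=-2\|G_+(I_++\gamma G_+)^{-3/2}u_0^+\|^2-2\|G_-(I_--\gamma G_-)^{-3/2}u_0^-\|^2\leq 0.
\]
Equality would force $G_+u_0^+=0$ and $G_-u_0^-=0$, but $G_\pm$ is positive definite on $\HH_\pm$ by construction, so $u_0^\pm=0$ and $u_0=u_0^0\in N(G)$, contradicting the hypothesis. Thus $F$ is strictly decreasing on $(-\kappa,\kappa)$, and the equation $F(\gamma)=0$ has at most one solution there.

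It remains to rule out coexistence with solutions at the endpoints $\pm\kappa$. At $\gamma=\kappa$, $y^+(\kappa)$ is still uniquely determined, while $y^-$ takes the form $(I_--\kappa G_-)^\dag u_0^-+n^-$ with $n^-\in\N_-$, provided $u_0^-\in R(I_--\kappa G_-)$. Using the orthogonality $(I_--\kappa G_-)^\dag u_0^-\perp\N_-$ and the fact that $\N_-$ lies in a single eigenspace of $G_-$, the quantity $\PI{G_-y^-}{y^-}$ splits as a fixed term plus a nonnegative multiple of $\|n^-\|^2$, so the neutrality condition translates into $\lim_{\gamma\to\kappa^-}F(\gamma)\geq 0$. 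By a symmetric analysis, solvability at $\gamma=-\kappa$ forces $\lim_{\gamma\to-\kappa^+}F(\gamma)\leq 0$. Combined with strict monotonicity on the open interval, any two distinct candidate values $\gamma_1<\gamma_2$ in $[-\kappa,\kappa]$ would yield the impossible chain $0\geq F(\gamma_1)>F(\gamma_2)\geq 0$ (interpreting $F$ at an endpoint as the corresponding one-sided limit), completing the proof.

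The main obstacle I expect is the endpoint bookkeeping: justifying that $\lim F(\gamma)$ exists at $\pm\kappa$ (or tends to $\pm\infty$ in the right sense), and confirming that the sign of the endpoint residual is precisely the one compatible with the strict monotonicity of $F$. This is where Lemmas \ref{lema:primera_serie} and \ref{lema:en_el_rango} become essential, since they control the behavior of $(I_\pm\pm\gamma G_\pm)^{-1}u_0^\pm$ as $\gamma$ approaches a boundary and tie boundedness of these limits to membership of $u_0^\pm$ in $R(I_\pm\mp\kappa G_\pm)$.
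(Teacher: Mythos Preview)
Your proposal is correct but takes a genuinely different route from the paper. The paper's proof is a short, purely algebraic argument that works uniformly across the whole interval: assuming two pairs $(\gamma_i,y_i)$ satisfy $(I+\gamma_iG)y_i=u_0$ with $y_i\in Q(G)$, one notes $\PI{u_0}{y_i}=\|y_i\|^2$, expands the cross-pairings to obtain $(\gamma_1-\gamma_2)\PI{Gy_1}{y_2}=\|y_1\|^2-\|y_2\|^2$, and then applies the Cauchy--Schwarz inequality for the positive semidefinite forms $I+\gamma_iG$ to force $\|y_1\|=\|y_2\|$, from which $\gamma_1=\gamma_2$ follows in two lines. No spectral decomposition, no case split between interior and endpoints.

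Your monotonicity argument is more analytic and more structural: it identifies $\gamma$ as the unique zero of a strictly decreasing scalar function $F$, which connects naturally with the existence proof in Proposition~\ref{prop:necesarias_suficientes} (where essentially the same comparison of $f_\pm$ appears). The cost is exactly the endpoint bookkeeping you flag: to compare an interior zero with a boundary solution you need that $(I_\pm\mp\gamma G_\pm)^{-1}u_0^\pm\to(I_\pm\mp\kappa G_\pm)^\dag u_0^\pm$ whenever $u_0^\pm\in R(I_\pm\mp\kappa G_\pm)$, which is true (the operator $(I_\pm\mp\gamma G_\pm)^{-1}(I_\pm\mp\kappa G_\pm)$ is a contraction by functional calculus, and the convergence follows by dominated convergence in the spectral integral) but is not isolated as a lemma in the paper. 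The algebraic proof is considerably shorter; yours yields more information about how $\gamma$ is actually determined.
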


\begin{proof}
Let $u_0\notin N(G)$, and assume there exist $\gamma_1,\gamma_2\in[-\kappa,\kappa]$ and $y_1,y_2\in Q(G)$ such that
\begin{align}\label{eq:sistema}
(I+\gamma_1 G)y_1&=u_0,\nonumber\\
(I+\gamma_2 G)y_2&=u_0.
\end{align}
On the one hand, since $\PI{Gy_i}{y_i}=0$ for $i=1,2$, $\PI{u_0}{y_i}=\|y_i\|^2$. On the other hand,
\begin{align}\label{eq:sistema_2}
\|y_1\|^2=\PI{u_0}{y_1}=\PI{(I+\gamma_2 G)y_2}{y_1}=\PI{y_2}{y_1}+\gamma_2\PI{Gy_2}{y_1},\nonumber\\
\|y_2\|^2=\PI{u_0}{y_2}=\PI{(I+\gamma_1 G)y_1}{y_2}=\PI{y_1}{y_2}+\gamma_1\PI{Gy_1}{y_2}.
\end{align}
This implies that
\begin{equation}\label{eq:gammas}
(\gamma_1-\gamma_2)\PI{Gy_1}{y_2}=\|y_1\|^2-\|y_2\|^2.
\end{equation}
By Cauchy-Schwarz inequality,
\begin{align*}
\|y_1\|^2&=|\PI{u_0}{y_1}|=|\PI{(I+\gamma_2 G)y_2}{y_1}|\leq\|y_2\|\|y_1\|,\\
\|y_2\|^2&=|\PI{u_0}{y_2}|=|\PI{(I+\gamma_1 G)y_1}{y_2}|\leq\|y_1\|\|y_2\|,
\end{align*}
and consequently $\|y_1\|=\|y_2\|$. By \eqref{eq:gammas}, this implies that $\gamma_1=\gamma_2$ or $\PI{Gy_1}{y_2}=0$.
However, if $\PI{Gy_1}{y_2}=0$, then from \eqref{eq:sistema_2} it is easy to see that $y_1=y_2$, which in turn, by \eqref{eq:sistema}, implies that
$(\gamma_1-\gamma_2)Gy_1=0$. But $y_1\notin N(G)$ because $u_0\notin N(G)$, and hence $\gamma_1=\gamma_2$.
\end{proof}

\medskip

For $u_0\notin N(G)$, consider the set of solutions to
\eqref{eq:eq_normal_sec_5},
\[
\Theta:=\set{y\in Q(G) \,:\,(I+\gamma G)y=u_0},
\]
for the unique suitable $\gamma\in[-\kappa,\kappa]$.
The following proposition describes the structure of the set $\Theta$, depending on whether
$\gamma$ is an interior point of the interval or $\gamma= \pm\kappa$. Denote by $\St$ the unit sphere
in $\HH$, i.e.
\[
\St=\setb{x\in\HH\,\,:\,\,\|x\|=1}.
\]
\begin{lema}\label{lema:forma_soluciones}
Let $u_0\notin N(G)$ and consider the unique $\gamma\in[-\kappa,\kappa]$ given by Proposition \ref{prop:unicidad}.
\begin{enumerate}[label=\roman*)]
\item If $\gamma\in(-\kappa,\kappa)$, then
\[
\Theta=\set{(I+\gamma G)^{-1} u_0}.
\]
\item If $\gamma=\kappa$, then there exists $\alpha_-\geq 0$ such that
\[
\Theta=(I+\kappa G)^\dag u_0\,+\alpha_-\cdot\mc{N}_-\cap\St.
\]
\item If $\gamma=-\kappa$, then there exists $\alpha_+\geq0$ such that
\[
\Theta=(I-\kappa G)^\dag u_0\,+\alpha_+\cdot\mc{N}_+\cap\St.
\]
\end{enumerate}
\end{lema}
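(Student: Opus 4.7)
My plan is to treat the three cases separately, exploiting the block decomposition $\HH=\HH_+\oplus\HH_-\oplus N(G)$ and the reduction \eqref{ecu desag} of the equation $(I+\gamma G)y=u_0$ to its components on $\HH_\pm$ and $N(G)$. Throughout, recall that $\mc{N}_-=N(I+\kappa G)\subseteq\HH_-$ and $\mc{N}_+=N(I-\kappa G)\subseteq\HH_+$, and that these subspaces are $G$-invariant with $G_-|_{\mc{N}_-}=\kappa^{-1}I$ and $G_+|_{\mc{N}_+}=\kappa^{-1}I$.

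For case (i), since $\gamma\in(-\kappa,\kappa)$ the operator $I+\gamma G$ is positive definite and therefore invertible, so the linear equation $(I+\gamma G)y=u_0$ admits the unique solution $y=(I+\gamma G)^{-1}u_0$. The hypothesis that $\gamma$ is the value given by Proposition \ref{prop:unicidad} (i.e.\ that $\Theta\neq\varnothing$ for this $\gamma$) forces this $y$ to belong to $Q(G)$, so $\Theta=\set{(I+\gamma G)^{-1}u_0}$.

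For case (ii), set $\gamma=\kappa$ and write any solution of $(I+\kappa G)y=u_0$ as $y=y_0+v$ with $y_0:=(I+\kappa G)^\dag u_0\in\mc{N}_-^\bot$ and $v\in\mc{N}_-$. Decomposing $y_0=y_0^++y_0^-+y_0^0$ along $\HH_+\oplus\HH_-\oplus N(G)$, the block equations \eqref{ecu desag} give
\[
y_0^+=(I_++\kappa G_+)^{-1}u_0^+,\qquad y_0^-=(I_--\kappa G_-)^\dag u_0^-,\qquad y_0^0=u_0^0,
\]
and since $y_0\in\mc{N}_-^\bot$ one has $y_0^-\in\mc{D}_-$. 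The key computation is then $\PI{Gy}{y}=\PI{G_+y_0^+}{y_0^+}-\PI{G_-y_0^-}{y_0^-}-\kappa^{-1}\|v\|^2$, where the cross term $\PI{G_-y_0^-}{v}$ vanishes by the $G_-$-invariance of $\mc{D}_-$ and the term $\PI{G_-v}{v}=\kappa^{-1}\|v\|^2$ comes from $G_-v=\kappa^{-1}v$. Imposing $y\in Q(G)$ then forces $\|v\|^2=\alpha_-^2$ with $\alpha_-$ as in \eqref{eq:alpha}, and conversely every $v\in\mc{N}_-$ with this norm produces an element of $\Theta$. Case (iii) follows by the same argument interchanging the roles of $+$ and $-$ and replacing $\mc{N}_-$ by $\mc{N}_+$.

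The main obstacle I anticipate is bookkeeping the orthogonality and invariance properties cleanly enough to kill the cross terms in the expansion of $\PI{Gy}{y}$: one must use simultaneously that $\mc{N}_-\subseteq\HH_-$ and that the Moore--Penrose solution $(I_--\kappa G_-)^\dag u_0^-$ lies in the positive defect subspace $\mc{D}_-$, so that $G_-y_0^-\bot\mc{N}_-$. Once this is in place, the identification of $\|v\|^2$ with the expression under the square root in \eqref{eq:alpha} is immediate, and the non-negativity $\alpha_-\geq 0$ is guaranteed by the standing assumption $\Theta\neq\varnothing$ in this case.
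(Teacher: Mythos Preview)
Your proposal is correct and follows essentially the same route as the paper's proof: both decompose $y$ along $\HH_+\oplus\HH_-\oplus N(G)$, solve the block equations \eqref{ecu desag}, and in case (ii) identify the free part $v\in\mc{N}_-$ of a solution and determine $\|v\|$ from the constraint $y\in Q(G)$, arriving at the expression \eqref{eq:alpha} for $\alpha_-$. Your treatment of the cross term via the $G_-$-invariance of $\mc{D}_-$ is exactly what is implicit in the paper's use of the description $Q(G)=\{y:\|G_+^{1/2}y^+\|=\|G_-^{1/2}y^-\|\}$.
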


\begin{proof}
{\it i)} If $\gamma\in(-\kappa,\kappa)$ then $I+\gamma G$ is invertible. Hence, $y_0=(I+\gamma G)^{-1} u_0$.

\smallskip

{\it ii)} Suppose that $\gamma=\kappa$. Since
\[
 Q(G)=\setB{y=y^++y^-+y^0\,\,:\,\,\|G_+^{1/2}y^+\|=\|G_-^{1/2}y^-\|\,,\,y^\pm\in\HH_\pm,y^0\in N(G)},
\]
writing $u_0=u_0^++u_0^-+u_0^0$ the condition $(I+\kappa G)y_0=u_0$ leads to
\[
(I_++\kappa G_+)y_0^+=u_0^+\quad\text{,}\quad(I_--\kappa G_-)y_0^-=u_0^-\quad\text{and}\quad y_0^0=u_0^0.
\]
Then,
\[
y_0^+=(I_++\kappa G_+)^{-1}u_0^+\quad\text{and}\quad y_0^-=(I_--\kappa G_-)^\dag u_0^-+ v,
\]
where $v\in \N_-$. If $v=0$, set $\alpha_-=0$. Otherwise, if $v\neq 0$, setting $\alpha_-:=\|v\|>0$ and
$y_-:=\frac{v}{\|v\|}\in\mc{N}_-\cap\St$, we have that 
\[
y_0 =(I+\kappa G)^\dag u_0 +\alpha_- y_-.
\]
It only remains to show that $\alpha_-$ is the same for every $y\in \Theta$.
But, since $\|G_+^{1/2}y^+\|=\|G_-^{1/2}y^-\|$, $\alpha_-$ is given by \eqref{eq:alpha},
and it does not depend on $y_0$ but only on $u_0$. Thus, 
\[
\Theta=(I+\kappa G)^\dag u_0\,+\alpha_-\cdot\mc{N}_-\cap\St.
\]
An analogous procedure for the case $\gamma=-\kappa$ completes the proof.
\end{proof}

As a consequence, as in Example \ref{ejemplo indef}, we can describe $\Theta$ by only analyzing which components of $u_0$ are null
according to the decomposition $\HH=\mc{N}_+\oplus\mc{D}_+\oplus \mc{N}_-\oplus\mc{D}_-\oplus N(G)$.

\begin{prop}\label{prop:lambda_conocido} 
Consider $u_0\notin N(G)$ and write $u_0=v^++w^++v^-+w^-+u_0^0$,
with $v^\pm\in\mc{N}_\pm$, $w^\pm\in\mc{D}_\pm$ and $u_0^0\in N(G)$.
\begin{enumerate}[label=\roman*)]
\item If $u_0\in\HH_\pm$, then there exists $\alpha_\pm>0$ such that
\[
\Theta=(I\pm\kappa G)^\dag u_0\,+\alpha_\mp\cdot\mc{N}_\mp\cap\St.
\]
\item If $v^+\neq0$ and $v^-\neq0$, then $\gamma\in(-\kappa,\kappa)$ and
\[
\Theta=\set{(I+\gamma G)^{-1}u_0}.
\]
\end{enumerate}
\end{prop}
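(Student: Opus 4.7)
My plan is to combine Lemma~\ref{lema:forma_soluciones}, which parametrizes $\Theta$ in each of the three cases $\gamma\in(-\kappa,\kappa)$, $\gamma=\kappa$, $\gamma=-\kappa$, with the uniqueness of $\gamma$ from Proposition~\ref{prop:unicidad}. In each item of the statement the work reduces to identifying which case actually occurs; for item~(i) I will also need to verify that the coefficient of the ``free direction'' is strictly positive. Throughout I will exploit that, since the operators $I_\pm\mp\kappa G_\pm$ are self-adjoint, their ranges are contained in the corresponding defect subspaces $\mc{D}_\pm=\HH_\pm\ominus\N_\pm$.

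For part~(ii), I assume $v^+\neq 0$ and $v^-\neq 0$ and rule out $\gamma=\pm\kappa$. If $\gamma=\kappa$, any $y\in\Theta$ satisfies $(I+\kappa G)y=u_0$, and projecting onto $\HH_-$ yields $(I_--\kappa G_-)y^-=u_0^-$; since $R(I_--\kappa G_-)\subseteq\N_-^\perp\cap\HH_-=\mc{D}_-$, this forces $u_0^-\in\mc{D}_-$, i.e.\ $v^-=0$, a contradiction. A symmetric projection onto $\HH_+$ rules out $\gamma=-\kappa$. Therefore $\gamma\in(-\kappa,\kappa)$, and Lemma~\ref{lema:forma_soluciones}(i) immediately gives $\Theta=\{(I+\gamma G)^{-1}u_0\}$.

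For part~(i), I take $u_0\in\HH_+\setminus\{0\}$ (so $u_0\notin N(G)$ and Proposition~\ref{prop:unicidad} applies). If $\gamma\in(-\kappa,\kappa)$, the unique $y_0=(I+\gamma G)^{-1}u_0$ has $y_0^-=(I_--\gamma G_-)^{-1}\cdot 0=0$ and $y_0^0=0$, hence $y_0\in\HH_+$; the neutrality condition $\PI{Gy_0}{y_0}=\PI{G_+y_0^+}{y_0^+}=0$ then forces $y_0^+=0$ because $G_+$ is positive definite on $\HH_+$, giving $u_0=0$, a contradiction. If $\gamma=-\kappa$, then $I_-+\kappa G_-$ is invertible (its spectrum lies in $[1,2]$), so the $\HH_-$ projection of $(I-\kappa G)y_0=u_0$ forces $y_0^-=0$, and the same neutrality argument again yields $u_0=0$. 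Thus $\gamma=\kappa$, and Lemma~\ref{lema:forma_soluciones}(ii) gives $\Theta=(I+\kappa G)^\dag u_0+\alpha_-\cdot\N_-\cap\St$ for some $\alpha_-\geq 0$. To see $\alpha_->0$, I suppose $\alpha_-=0$; then every element of $\Theta$ equals $(I+\kappa G)^\dag u_0$, which (since $u_0^-=u_0^0=0$) lies in $\HH_+$, and the same neutrality argument again forces $u_0=0$. The case $u_0\in\HH_-\setminus\{0\}$ is entirely symmetric, yielding $\gamma=-\kappa$ and $\alpha_+>0$.

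The main obstacle is the bookkeeping across the decomposition $\HH=\HH_+\oplus\HH_-\oplus N(G)$, and the realization that the key mechanism in both parts is the same: self-adjointness of $I_\pm\mp\kappa G_\pm$ traps their ranges in $\mc{D}_\pm$, which is what converts the algebraic conditions on $v^\pm$ and on the location of $u_0$ into a forced choice of $\gamma$. Once this observation is in place, no delicate estimates are needed; one only has to verify carefully that, in each ruled-out scenario, the contradiction $u_0=0$ really follows from the positive definiteness of $G_\pm$ on $\HH_\pm$.
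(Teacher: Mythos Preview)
Your proof is correct. For item~(i) your argument is essentially the paper's: both establish $\gamma=\kappa$ (when $u_0\in\HH_+$) by using that a nonzero solution $y_0\in Q(G)$ cannot lie entirely in $\HH_+$. The paper phrases this as ``$y_0^-\neq 0$, hence $(I_--\gamma G_-)y_0^-=0$ forces $\gamma=\kappa$'', while you rule out the alternatives one by one; you are also more explicit in verifying $\alpha_->0$, which the paper leaves implicit in the formula for $\alpha_-$.

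For item~(ii) the routes genuinely differ. The paper argues constructively: it introduces the functions $g_\pm(\tau)=\|G_\pm^{1/2}(I_\pm\pm\tau G_\pm)^{-1}u_0^\pm\|^2$, uses Lemma~\ref{lema:primera_serie} to see that $v^\pm\neq 0$ forces $g_\pm(\tau)\to+\infty$ as $\tau\to\mp\kappa$ while the other function stays bounded, and then applies an intermediate-value argument to locate $\gamma\in(-\kappa,\kappa)$ with $g_+(\gamma)=g_-(\gamma)$. Your approach is purely algebraic: you rule out $\gamma=\pm\kappa$ by the single observation that $R(I_\pm\mp\kappa G_\pm)\subseteq\mc{D}_\pm$, which immediately contradicts $v^\mp\neq 0$. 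Your argument is shorter and avoids the analytic machinery of Lemma~\ref{lema:primera_serie}, relying only on the existence of \emph{some} $\gamma$ (granted by the standing assumption $\N_\pm\neq\{0\}$ via Proposition~\ref{prop:necesarias_suficientes}) together with Proposition~\ref{prop:unicidad} and Lemma~\ref{lema:forma_soluciones}. The paper's approach, on the other hand, is self-contained in that it exhibits the interior $\gamma$ directly rather than by elimination.
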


\begin{proof}
\noi {\it i)}\ Assume that $u_0\in\HH_+$ and consider $y_0=y_0^++y_0^-+y_0^0\in\Theta$ with $y_0^\pm\in\HH_\pm$ and
$y_0^0\in N(G)$. If $\gamma\in[-\kappa,\kappa]$ is such that $(I+\gamma G)y_0=u_0$, then
\[
(I_++\gamma G_+)y_0^++(I_--\gamma G_-)y_0^-+y_0^0=u_0=v^++w^++u_0^0.
\]
Since $(I_--\gamma G_-)y_0^-=0$ and $y_0^-\neq 0$, it holds that $\gamma=\kappa$. The result then follows
from Lemma \ref{lema:forma_soluciones}. The proof is analogous when $u_0\in\HH_-$.
\medskip

\noi {\it ii)}\ Assuming that $v^+\neq0$ and $v^-\neq0$, following the same ideas of Proposition \ref{prop:necesarias_suficientes},
we show that there exists $\gamma\in(-\kappa,\kappa)$ such that
\[
\|G_+^{1/2}(I_++\gamma G_+)^{-1}(v^++w^+)\|=\|G_-^{1/2}(I_--\gamma G_-)^{-1}(v^-+w^-)\|,
\]
which implies that the vector $y_0:= (I+\gamma G)^{-1}u_0$ belongs to $\Theta$ (because $y_0\in Q(G) $ and $(I+\gamma G)y_0=u_0$).


Consider the real valued functions $g_\pm$ defined by
\[
g_\pm(\tau)=\|G_\pm^{1/2}(I_\pm\pm \tau G_\pm)^{-1}(v^\pm+w^\pm)\|^2,\quad\quad \tau\in(-\kappa,\kappa).
\]
Since $G_\pm^{1/2}$ and $(I_\pm\pm\tau G_\pm)^{-1}$ commute, and $G_\pm^{1/2}v^\pm=\kappa^{-1/2}v^\pm$,
Lemma \ref{lema:primera_serie} implies that
\begin{align*}
g_\pm(\tau)&=\frac{\kappa^2}{(\kappa\pm \tau)^2}\|G_\pm^{1/2}v^\pm\|^2+\|(I_\pm\pm \tau G_\pm)^{-1}G_\pm^{1/2}w^\pm\|^2\\
&=\frac{\kappa}{(\kappa\pm \tau)^2}\|v^\pm\|^2+\|G_\pm^{1/2}(I_\pm\pm \tau G_\pm)^{-1}w^\pm\|^2,\quad\quad\text{for every $\tau\in(-\kappa,\kappa)$}.
\end{align*}
Since the operator $I_-+\kappa G_-$ is invertible, it follows that $g_-$ is bounded on $(-\kappa,0)$. Analogously, $g_+$ is bounded
on $(0,\kappa)$. On the other hand, since $v^\pm\neq0$, it is immediate that
\[
\lim_{\tau\to-\kappa}g_+(\tau)=+\infty\quad\text{and}\quad\lim_{\tau\to\kappa}g_-(\tau)=+\infty.
\]
Hence, it is readily seen that there exists $\gamma\in(-\kappa,\kappa)$ such that $g_-(\gamma)=g_+(\gamma)$, or equivalently,
\[
\|G_+^{1/2}(I_++\gamma G_+)^{-1}(v^++w^+)\|=\|G_-^{1/2}(I_--\gamma G_-)^{-1}(v^-+w^-)\|.
\]
Thus, $\Theta=\set{(I+\gamma G)^{-1} u_0}$.
\end{proof}

As it is illustrated by Case 3 in Example \ref{ejemplo indef}, if $u_0$ does not belong to $\HH_+$ nor to $\HH_-$ and also
$v^-=0$ or $v^+=0$ (which is the only situation not covered by Proposition \ref{prop:lambda_conocido}), it is not possible to
assert whether $\Theta$ is a singleton.

\bigskip

To end this section, we show how these previous results can be applied to describe the set of solutions to Problem 1.
We assume that $N(T)\cap N(V)=\set{0}$ and $\ZZ(w,z)\neq\varnothing$ for every $(w,z)\in\KK\x\EE$.

\medskip

Consider an initial data point $(w_0,z_0)\in\KK\x\EE$ and a fixed vector $x_0\in\HH$ such that $Vx_0=z_0$. By Theorem
\ref{teo:derivada}, the set of solutions to Problem \ref{pb 1} is $\ZZ(w_0,z_0)=x_0+\Omega$ with the set $\Omega$ given by  
\[
\Omega:=\setB{y\in\CV\,\,:\,\,(T^\#T+\lambda V^\#V)y=-T^\#(Tx_0-w_0)\,\,\text{ for some $\lambda\in[\rho_-,\rho_+]$}}.
\]
Considering the operator
$G$ given by \eqref{eq:def_G} and setting $u_0:=-(L^\#L)^{-1/2}T^\#(Tx_0-w_0)$, $\Omega$ can be alternatively described
as
\[
\Omega=\setB{y\in\CV\,\,:\,\,(I+\gamma G)(L^\#L)^{1/2}y=u_0\,\,\text{ for some $\gamma\in[-\kappa,\kappa]$}},
\]
Since $(L^\#L)^{1/2}(\CV)=Q(G)$, it follows that $\Omega=(L^\#L)^{-1/2}(\Theta)$ and thus
\[
\ZZ(w_0,z_0)=x_0+(L^\#L)^{-1/2}(\Theta).
\]

\medskip

We establish now the main result of this section.

\begin{thm}
There exists an open and dense subset $\mc{M}$ of $\KK\x\EE$ such that $\ZZ(w,z)$ is a singleton for every $(w,z)\in\mc{M}$.
\end{thm}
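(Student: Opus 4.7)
The plan is to identify $\mc{M}$ as the set of initial data $(w,z)\in\KK\x\EE$ for which the associated vector $u_0$ lies in the generic case of part {\it ii)} of Proposition \ref{prop:lambda_conocido}, namely $v^+\neq 0$ and $v^-\neq 0$, and then to verify that this condition carves out an open and dense subset of $\KK\x\EE$. Through the identification $\ZZ(w,z)=x_0+(L^\#L)^{-1/2}(\Theta)$ established just before the statement, the singleton-ness of $\Theta$ will translate directly into that of $\ZZ(w,z)$.

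Consider the bounded linear map $\Phi:\KK\x\EE\to\HH$ defined by
\[
\Phi(w,z)=(L^\#L)^{-1/2}\big(T^\#w-T^\#TV^\dag z\big),
\]
which produces the $u_0$ associated with the choice $x_0=V^\dag z$. Denote by $P_\pm$ the orthogonal projections from $\HH$ onto $\mc{N}_\pm$, and set
\[
A_\pm:=\setB{(w,z)\in\KK\x\EE\,:\,P_\pm\Phi(w,z)=0}.
\]
Each $A_\pm$ is a closed subspace of $\KK\x\EE$, so the candidate $\mc{M}:=(\KK\x\EE)\setminus(A_+\cup A_-)$ is automatically open by continuity of $P_\pm\circ\Phi$.

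The crux of the argument is showing that $A_+$ and $A_-$ are \emph{proper} subspaces. Fix any nonzero $n\in\mc{N}_\pm$. By Lemma \ref{lema:malditos_nucleos}, $n=(L^\#L)^{1/2}m$ for some $m\in N(T)^\bot=R(T^\#)$, and the injectivity of $(L^\#L)^{1/2}$ forces $m\neq 0$. Pick $w_*\in\KK$ with $T^\#w_*=m$. Using the self-adjointness of $(L^\#L)^{-1/2}$ on $\HH$,
\[
\PI{\Phi(w_*,0)}{n}=\PI{(L^\#L)^{-1/2}T^\#w_*}{(L^\#L)^{1/2}m}=\PI{m}{m}=\|m\|^2>0,
\]
so $(w_*,0)\notin A_\pm$, as desired. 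Since a proper closed subspace of a Banach space is nowhere dense, so is the finite union $A_+\cup A_-$; hence $\mc{M}$ is dense.

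Finally, for every $(w,z)\in\mc{M}$ the vector $u_0=\Phi(w,z)$ satisfies $v^+:=P_+u_0\neq 0$ and $v^-:=P_-u_0\neq 0$, and in particular $u_0\notin N(G)$ since $\mc{N}_\pm\cap N(G)=\set{0}$. Proposition \ref{prop:lambda_conocido} {\it ii)} then gives $\Theta=\set{(I+\gamma G)^{-1}u_0}$, and consequently $\ZZ(w,z)=x_0+(L^\#L)^{-1/2}(\Theta)$ is a singleton. The principal obstacle in this plan is the properness step: without Lemma \ref{lema:malditos_nucleos}, there is no obvious reason why the range of $\Phi$ should fail to be contained in $\mc{N}_\pm^\bot$. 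It is precisely the inclusion $\mc{N}_\pm\subseteq(L^\#L)^{1/2}(N(T))^\bot$ that makes the test vector $w_*$ available and yields the positive inner product above.
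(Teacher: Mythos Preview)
Your approach is essentially the paper's own: both arguments pull back the ``generic'' set $\{u_0:v^+\neq 0,\ v^-\neq 0\}$ through the bounded linear map $(w,z)\mapsto u_0$, and both rely on Lemma~\ref{lema:malditos_nucleos} to show that the pullback is not all of $\KK\times\EE$. Your packaging via the proper closed subspaces $A_\pm=N(P_\pm\circ\Phi)$ is in fact a bit cleaner than the paper's appeal to the closed range of $A$.

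There is, however, a concrete slip in your properness step. Lemma~\ref{lema:malditos_nucleos} asserts
\[
\mc{N}_\pm\subseteq \big((L^\#L)^{1/2}(N(T))\big)^\bot=(L^\#L)^{-1/2}\big(N(T)^\bot\big),
\]
\emph{not} $\mc{N}_\pm\subseteq (L^\#L)^{1/2}\big(N(T)^\bot\big)$. Hence, for $n\in\mc{N}_\pm$ it is $(L^\#L)^{1/2}n$ that lies in $N(T)^\bot=R(T^\#)$, not $(L^\#L)^{-1/2}n$. Your choice $n=(L^\#L)^{1/2}m$ with $m\in N(T)^\bot$ is therefore unjustified, and the existence of your test vector $w_*$ is not guaranteed as written.

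The fix is immediate and actually simplifies the computation: set $m:=(L^\#L)^{1/2}n\in N(T)^\bot=R(T^\#)$, choose $w_*\in\KK$ with $T^\#w_*=m$, and observe that
\[
\Phi(w_*,0)=(L^\#L)^{-1/2}T^\#w_*=(L^\#L)^{-1/2}(L^\#L)^{1/2}n=n,
\]
so $P_\pm\Phi(w_*,0)=n\neq 0$ directly. With this correction the rest of your argument goes through unchanged.
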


\begin{proof} 
The set
\[
\mc{\widetilde{M}}=\setB{u=v^+ +w^+ + v^- + w^-+u^0\in \HH\,:\,\,v^\pm\in\mc{N}_\pm\setminus\set{0}\,, w^\pm\in\mc{D}_\pm\,, u^0\in N(G)\,}
\]
is non empty, open and dense in $\HH$. In fact, $\mc{\widetilde{M}}$ is non empty as a
consequence of the assumption that $\mc{N}_\pm\neq\varnothing$
and Lemma \ref{lema:malditos_nucleos},
while the remaining conditions follow immediately. By Proposition \ref{prop:lambda_conocido}, $\Theta$ is a singleton for every
$u\in\mc{\widetilde{M}}$.

Finally, considering the operator $A:\KK\x\EE\to\HH$ given by
\[
A(w,z)=-(L^\#L)^{-1/2}T^\#(TV^\dag z-w),\quad\quad (w,z)\in\KK\x\EE,
\]
yields $R(A)=(L^\#L)^{1/2}\big(N(T)\big)^\bot$ is a closed subspace, and consequently $\mc{M}:=A^{-1}\big(\widetilde{\mc{M}}\big)$
is an open and dense subset of $\KK\x\EE$. Hence, $\ZZ(w,z)$ is a singleton for every $(w,z)\in \mc{M}$.
\end{proof}

\medskip

\begin{obs} 
An immediate consequence of Hypothesis \ref{hipo sec 4} is that $N(T)\cap N(V)=\set{0}$.
However, the condition in this hypothesis can be slightly modified in order to address the case in which this intersection is non trivial.
Indeed, the following conditions are equivalent:
\begin{enumerate}[label={\it \roman*})]
\item $\ZZ(w,z)\neq\varnothing$ for every $(w,z)\in\KK\x\EE$;
\item there exists $\alpha>0$ such that
\[
\K{Ty}{Ty}\geq\alpha\|y\|^2,\quad\quad\text{for every $y\in \CV\cap\big(N(T)\cap N(V)\big)^\bot$,}
\]
and
\[
\sup_{x\in\mc{P}^-(V)}\frac{\K{Tx}{Tx}}{\K{Vx}{Vx}}\quad\quad\text{and}\quad\quad\inf_{x\in\mc{P}^+(V)}\frac{\K{Tx}{Tx}}{\K{Vx}{Vx}}
\]
are attained.
\end{enumerate}
As a result, there exists an open and dense subset $\mc{M}$ of $\KK\x\EE$ such that,
instead of a singleton, the set of solutions to Problem \ref{pb 1} is an affine manifold parallel to the subspace
$N(T)\cap N(V)$ i.e. for every $(w,z)\in\mc{M}$,
\[
\ZZ(w,z)=\wt{x}_{(w,z)} + N(T)\cap N(V),
\]
where $\wt{x}_{(w,z)}$ is a particular solution to Problem \ref{pb 1} with initial data $(w,z)$. 
%
\end{obs}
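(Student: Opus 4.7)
The approach is to reduce the statement to Theorem \ref{teo:ida_y_vuelta} by separating out the common kernel $\mathcal{M}_0 := N(T)\cap N(V)$. Set $\HH_0 := \mathcal{M}_0^\bot$ and consider the restrictions $T_0 := T|_{\HH_0}\in\mc{L}(\HH_0,\KK)$ and $V_0 := V|_{\HH_0}\in\mc{L}(\HH_0,\EE)$. Then $R(T_0)=R(T)$ is closed, $R(V_0)=\EE$, and $N(T_0)\cap N(V_0)=\set{0}$. Since $T^\#T$ and $V^\#V$ are Hilbert-selfadjoint and vanish on $\mathcal{M}_0$, their ranges lie in $\HH_0$; a direct computation then gives $T_0^\#T_0 = T^\#T|_{\HH_0}$ and $V_0^\#V_0 = V^\#V|_{\HH_0}$, and both remain indefinite on $\HH_0$ because the sign-witnessing vectors can be projected onto $\HH_0$ without changing the relevant quadratic values.

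The second step is to verify that Problem \ref{pb 1} on $\HH$ reduces cleanly to the corresponding problem on $\HH_0$. Writing $x=x_1+x_2$ with $x_1\in\HH_0$ and $x_2\in\mathcal{M}_0$, one has $Tx=T_0x_1$ and $Vx=V_0x_1$, so both the objective and the quadratic constraint depend only on the $\HH_0$-component. Letting $\wt{\ZZ}(w,z)\subseteq\HH_0$ denote the solution set of the reduced problem, this yields the key identity
\[
\ZZ(w,z) = \wt{\ZZ}(w,z) + \mathcal{M}_0,
\]
so nonemptiness transfers in both directions. The same decomposition shows that $\CV\cap\mathcal{M}_0^\bot = \mc{C}_{V_0}$, $\mc{P}^\pm(V)\cap\HH_0 = \mc{P}^\pm(V_0)$, and the Rayleigh quotients $\K{Tx}{Tx}_\KK/\K{Vx}{Vx}_\EE$ are invariant under the projection $x\mapsto P_{\HH_0}x$ on $\mc{P}^\pm(V)$. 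Consequently, the two parts of condition (ii) in the remark are precisely the two conditions of Theorem \ref{teo:ida_y_vuelta}(ii) applied to $(\HH_0,T_0,V_0)$.

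With this reduction in place, the equivalence (i)$\Leftrightarrow$(ii) follows immediately from Theorem \ref{teo:ida_y_vuelta} applied to $(\HH_0,T_0,V_0)$. For the structural description of $\ZZ(w,z)$, assuming (i) I would apply the main theorem of Section \ref{soluciones} in the reduced setting to obtain an open and dense subset $\mc{M}\subseteq\KK\x\EE$ such that $\wt{\ZZ}(w,z)=\set{\wt{x}_{(w,z)}}$ is a singleton in $\HH_0$ for every $(w,z)\in\mc{M}$. Combining with the identity above yields $\ZZ(w,z)=\wt{x}_{(w,z)}+N(T)\cap N(V)$, an affine manifold parallel to $N(T)\cap N(V)$, as claimed. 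The main point requiring care is the identification $T_0^\#T_0 = T^\#T|_{\HH_0}$ (and its analogue for $V$): without it the standing hypotheses would not transfer to the reduced problem, but the Hilbert selfadjointness of $T^\#T$ and $V^\#V$, together with $\mathcal{M}_0\subseteq N(T)\cap N(V)$, forces $R(T^\#T),R(V^\#V)\subseteq\HH_0$ and makes the identification automatic.
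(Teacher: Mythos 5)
Your reduction is correct: the paper states this remark without proof, and splitting off $\mathcal{M}_0=N(T)\cap N(V)$, restricting to $\mathcal{M}_0^\bot$, and invoking Theorem \ref{teo:ida_y_vuelta} together with the singleton theorem of Section \ref{soluciones} is exactly the argument the authors intend. All the transfer identities you flag (closed range of $T_0$, surjectivity of $V_0$, $T_0^\#T_0=T^\#T|_{\HH_0}$ via selfadjointness and $R(T^\#T)\subseteq\mathcal{M}_0^\bot$, $\CV\cap\mathcal{M}_0^\bot=\mc{C}_{V_0}$, invariance of the Rayleigh quotients, and $\ZZ(w,z)=\wt{\ZZ}(w,z)+\mathcal{M}_0$) check out.
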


\section{Application: Indefinite abstract mixed splines}\label{mixed}


The abstract mixed problem in Hilbert spaces was originally proposed by A.~I. Rozhenko and V.~A. Vasilenko in \cite{[RV]}, and it can be stated as follows.
Let $(\HH,\PI{\cdot}{\cdot}_\HH)$, $(\KK_1,\PI{\cdot}{\cdot}_{\KK_1})$, $(\KK_2,\PI{\cdot}{\cdot}_{\KK_2})$ and $(\EE,\PI{\cdot}{\cdot}_\EE)$ be  Hilbert spaces, and consider (bounded) surjective operators  $U:\HH\ra\KK_1$, $W:\HH\ra\KK_2$ and $V:\HH\ra\EE$.
Given $(w_0,z_0)\in\KK_2\x\EE$ and $\mu\in\RR$, analize the existence of
\begin{equation*}
\min_{x\in\HH} \parentesis{\|Ux\|_{\KK_1}^2+\mu\|Wx-w_0\|_{\KK_2}^2},\quad\textit{ \normalfont{subject to} }Vx=z_0,
\end{equation*}
and if the minimum exists, find the set of arguments at which it is attained.

The abstract mixed splines problem is a generalization of the abstract interpolating and smoothing splines problems proposed by Atteia in \cite{[Att1]}.
For a complete exposition on these subjects see \cite{[Att4], Be, [CLM1]}.

Generalizations to Krein spaces of the abstract interpolating and smoothing splines problems have been studied before \cite{GMMP10, JOTA}.
in particular, a generalization of the abstract mixed splines problem was also proposed in \cite{GMMP10}.

The following indefinite abstract mixed splines problem follows as a natural generalization of this family of problems. 
Given a Hilbert space $(\HH,\PI{\cdot}{\cdot}_\HH)$, and Krein spaces $(\KK_1,\K{\cdot}{\cdot}_{\KK_1})$, $(\KK_2,\K{\cdot}{\cdot}_{\KK_2})$ and $(\EE,\K{\cdot}{\cdot}_\EE)$,
let $U\in \mc{L}(\HH,\KK_1)$, $W\in \mc{L}(\HH,\KK_2)$ and $V\in\mc{L}(\HH,\EE)$ be (bounded) surjective operators.  

\begin{prob}\label{pb mixed 1}
Given $\mu\neq0$, and $(w_0,z_0)\in\KK_2\x\EE$, analyze the existence of
\begin{align*}
\min_{x\in\HH} &\left(\K{Ux}{Ux}_{\KK_1}+ \mu\K{Wx-w_0}{Wx-w_0}_{\KK_2}\right), \\ &\textit{ \normalfont{subject to} }\K{Vx-z_0}{Vx-z_0}_{\EE}=0,
\end{align*}
and if the minimum exists, find the set of arguments at which it is attained.
\end{prob}

If  $V^\#V$ is semidefinite then Problem \ref{pb mixed 1}
becomes the abstract mixed splines problem analyzed in \cite{GMMP10}.
We proceed now to describe how this problem can be studied in the context of the ILSP analyzed in this paper, in the case when $V^\#V$ is indefinite.

Given $\mu\neq 0$, define the inner product on $\KK_1\x \KK_2$ as in \eqref{met indef para KxE} and assume that $U^\#U+\mu W^\#W$ is indefinite.
Also, defining the operator $T: \HH \ra \KK_1\x\KK_2$ by
\begin{equation}\label{T mixed}
Tx:=(Ux,Wx), \ \ \ \ x\in \HH,
\end{equation}
it is immediate that Problem \ref{pb mixed 1} is equivalent to the following: given $(w_0,z_0)\in\KK_2\x\EE$, analyze the existence of
\begin{equation}\label{cuadr min mixed}
\min_{x\in\HH} \K{Tx-(0,w_0)}{Tx-(0,w_0)}_\mu,\textit{ \normalfont{subject to} }\K{Vx-z_0}{Vx-z_0}_{\EE}=0,
\end{equation}
and if the minimum exists, find the set of arguments at which it is attained. Hence, it is clear that this is a particular case of Problem \ref{pb 1}. 
Moreover, if $w_0=0$ and $T$ is surjective, then \eqref{cuadr min mixed} reduces to the indefinite abstract splines problem 
considered in \cite{JOTA} with initial data $z_0\in \EE$.
The following proposition provides a necessary and sufficient condition for this particular case.

\begin{prop}
The operator $T$ defined in \eqref{T mixed} is surjective if and only if 
\[
\HH=N(U)+N(W).
\]
\end{prop}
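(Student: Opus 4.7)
The proposition is a purely algebraic statement about ranges and kernels, so the plan is to give a direct two-direction argument without invoking topology.

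For the forward implication, the plan is to fix an arbitrary $x\in\HH$ and produce a decomposition $x=x_1+x_2$ with $x_1\in N(U)$ and $x_2\in N(W)$. The natural choice of target is $(Ux,0)\in\KK_1\x\KK_2$: surjectivity of $T$ hands us a vector $x_2\in\HH$ with $Ux_2=Ux$ and $Wx_2=0$, so $x_2\in N(W)$. Setting $x_1:=x-x_2$ then gives $Ux_1=0$ and $x=x_1+x_2$, as required.

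For the converse, I would start from an arbitrary target $(y_1,y_2)\in\KK_1\x\KK_2$ and build a preimage. Since $U$ and $W$ are individually surjective, pick $u,w\in\HH$ with $Uu=y_1$ and $Ww=y_2$. Using the hypothesis $\HH=N(U)+N(W)$, split $u=u_1+u_2$ with $u_1\in N(U)$, $u_2\in N(W)$, and $w=w_1+w_2$ with $w_1\in N(U)$, $w_2\in N(W)$. Then the candidate $x:=u_2+w_1$ satisfies
\[
Ux=Uu_2+Uw_1=U(u-u_1)+0=y_1,\qquad Wx=Wu_2+Ww_1=0+W(w-w_2)=y_2,
\]
so $Tx=(y_1,y_2)$.

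Both directions are short; the only subtlety to be alert to is that the statement is about surjectivity as a set-theoretic property, so no closedness of the sum $N(U)+N(W)$ needs to be addressed (the forward direction automatically yields $N(U)+N(W)=\HH$). I do not expect a genuine obstacle: the key trick is recognizing that separating the two coordinates of $T$ requires one application of surjectivity of $T$ (for the forward direction) and the decomposition hypothesis applied twice, once to a preimage under $U$ and once to a preimage under $W$ (for the converse).
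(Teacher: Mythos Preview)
Your argument is correct. The forward direction is essentially the paper's: both pick a preimage of $(Ux,0)$ (the paper does it for arbitrary $u\in\KK_1$ and concludes $U(N(W))=\KK_1$, hence $\HH=U^{-1}(U(N(W)))=N(U)+N(W)$, but this is the same idea).

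The converse, however, is genuinely different and more elementary than the paper's. The paper proceeds indirectly: it first uses $N(U)^\bot\cap N(W)^\bot=\{0\}$ to show $N(T^\#)=\{0\}$, giving dense range, and then runs a separate closedness argument for $R(T)$ via a sequence computation with $W^\dag$ and the closedness of $U(N(W))$. Your direct construction of a preimage $x=u_2+w_1$ bypasses all of this topology and works purely algebraically; it also makes transparent why the hypotheses that $U$ and $W$ are individually surjective are needed. The paper's route has the minor advantage of exhibiting explicitly that $R(T)$ is closed (hence that $T^\dag$ is bounded), but for the statement as written your argument is cleaner.
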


\begin{proof} Assume $R(T)=\KK_1\x\KK_2$, and let $(u,0)\in\KK_1\x\KK_2$. Then there exists $y\in\HH$ such that $(Uy,0)=Ty=(u,0)$. Consequently, $y\in N(W)$ and since $u\in\KK_1$
is arbitrary $\KK_1=U(N(W))$ follows. Thus, $\HH=U^{-1}\parentesis{U(N(W))}=N(U)+N(W)$.

Conversely, assume that $\HH=N(U)+N(W)$. Then $N(U)^\bot\cap N(W)^\bot=\set{0}$. Given $(u,w)\in N(T^\#)$, we have that
\[
U^\#u=-\mu W^\#w\in R(U^\#)\cap R(W^\#)=N(U)^\bot\cap N(W)^\bot=\set{0}, 
\]
and $(u,w)=(0,0)$ because $U^\#$ and $W^\#$ are injective.  
Therefore, $N(T^\#)=\set{0}$ and $\overline{R(T)}=\KK_1\x\KK_2$.

Since $U(N(W))=U(\HH)=\KK_1$, it follows that $U(N(W))$ is closed. Now,
consider a sequence $(x_n)_{n\in\NN}$ in $\HH$ such that $Tx_n\ra (y,z)$ for some $(y,z)\in \KK_1\x\KK_2$. Then, for each
$n\in\NN$ consider $u_n=P_{N(W)^\bot}x_n$ and $v_n=P_{N(W)}x_n$. Then, $Wu_n=Wx_n=z_n\ra z$ and, since $u_n\in N(W)^\bot$, $u_n=W^\dag Wu_n\ra W^\dag z$. Therefore,
$Uu_n\ra UW^\dag z$ and
\[
Uv_n=Ux_n - Uu_n\ra y-UW^\dag z.
\]
The closedness of $U(N(W))$ implies that there exists $u\in N(W)$ such that $y-UW^\dag z=Uu$. Hence, $U(W^\dag z + u)=y$ and $W(W^\dag z +u)=z+Wu=z$. Thus,
$T(W^\dag z + u)=(y,z)$ and the range of $T$ is closed, thus completing the proof.
\end{proof}

Now for a fixed $\rho\neq 0$ we define a new indefinite inner product on $\KK_1\x\KK_2\x \EE$. If $u,u'\in\KK_1$, $w,w'\in \KK_2$ and $z,z'\in\EE$, 
\begin{align*}
\K{(u,w,z)}{(u',w',z')}_{\rho} 
:=\K{u}{u'}_{\KK_1}+\mu\K{w}{w'}_{\KK_2}+\rho\K{z}{z'}_\EE.
\end{align*}

It is easy to see that the space $\KK_1\x\KK_2\x\EE$ is a Krein space with this indefinite inner product.
Also, defining the operator $L: \HH \ra \KK_1\x\KK_2\x\EE$ by
\begin{equation}\label{L mixed}
Lx:=(Tx,Vx)=(Ux,Wx,Vx), \ \ \ \ x\in \HH,
\end{equation}
it is immediate that
\[
L^\#L=U^\#U+\mu W^\#W+\rho V^\#V.
\]

By means of the operators $T$ and $L$ defined in \eqref{T mixed} and \eqref{L mixed} respectively, the results concerning the ILSP analyzed in this paper can be directly applied.

\end{document}